\documentclass[preprint,12pt]{elsarticle}
 
\usepackage{graphicx}
\usepackage{subfigure}
\usepackage{amsmath,amssymb,amsthm}
\usepackage{color}
\usepackage{framed}
\usepackage{url}

\usepackage{algorithm}
\usepackage{algorithmic}

\textheight 9.35 in
\textwidth 6.30 in
\topmargin -0.5 in
\oddsidemargin 0.0 in
\evensidemargin 0.0 in

\newcommand{\NRR}{\textup{NRR}}

\newcommand{\PK}{\textup{P}_{\textup{K}}}

\newcommand{\ut}{\underline{\boldsymbol{t}}}
\newcommand{\ux}{\underline{\boldsymbol{x}}}
\newcommand{\us}{\underline{\boldsymbol{s}}}
\newcommand{\tus}{\underline{\tilde{\boldsymbol{s}}}}
\newcommand{\uq}{\underline{\boldsymbol{q}}}
\newcommand{\vY}{\boldsymbol{Y}}

\newcommand{\NN}{\mathbb{N}}
\newcommand{\ZZ}{\mathbb{Z}}

\newcommand{\RR}{\mathbb{R}}
\newcommand{\CC}{\mathbb{C}}

\newcommand{\ud}{\textup{d}}

\newcommand{\argmax}{\operatornamewithlimits{argmax}}

\newtheorem{theorem}{Theorem}[section]
\newtheorem{lemma}[theorem]{Lemma}

\newtheorem{corollary}[theorem]{Corollary}

\newenvironment{definition}[1][Definition]{\vspace{.4cm}\begin{trivlist}
\item[\hskip \labelsep {\bfseries #1}]}{\end{trivlist}\vspace{.4cm}}
\newenvironment{example}[1][Example]{\begin{trivlist}
\item[\hskip \labelsep {\bfseries #1}]}{\end{trivlist}}
\newenvironment{remark}[1][Remark]{\vspace{.4cm}\begin{trivlist}
\item[\hskip \labelsep {\bfseries #1}]}{\end{trivlist}\vspace{.4cm}}


\begin{document}

\begin{frontmatter}

\title{Real-time dynamics acquisition from irregular samples -- with application to anesthesia evaluation} 

\author{Charles K. Chui}
\address{Department of Statistics, Stanford University, Stanford, CA 94305, USA (\url{ckchui@stanford.edu})}

\author{Yu-Ting Lin}
\address{Department of Anesthesiology, Shin Kong Wu Ho-Su Memorial Hospital, Taipei, Taiwan;\, Graduate Institute of Biomedical Electronics and Bioinformatics, National Taiwan University, Taipei, Taiwan
 (\url{linyuting@hotmail.com.tw})}

\author{Hau-tieng Wu}
\address{Department of Mathematics, Stanford University, Stanford, CA 94305, USA (\url{hauwu@stanford.edu})}


\begin{abstract}
Although digital representations of information sources are ubiquitous in our daily life, almost all digital information sources are regular samples (or obtained by uniform sampling) of some continuous function representations. However, there are many important events for which only irregular data samples are available, including trading data of the financial market and various clinical data, such as the respiration signals hidden in ECG measurements. For such digital information sources, the only available effective smooth function interpolation scheme for digital-to-analog (D/A) conversion algorithms are mainly for off-line applications. Hence, in order to adapt the powerful continuous-function mathematical approaches for real-time applications, it is necessary to introduce an effective D/A conversion scheme as well as to modify the desired continuous-function mathematical method for on-line implementation. The powerful signal processing tool to be discussed in this paper is the synchrosqueezing transform (SST), which requires computation of the continuous wavelet transform (CWT), as well as its derivative, of the analog signal of interest. An important application of this transform is to extract information, such as the underlying dynamics, hidden in the signal representation. The first objective of this paper is to introduce a unified approach to remove the two main obstacles for adapting the SST approach to irregular data samples in order to allow online computation. Firstly, for D/A conversion, a real-time algorithm, based on spline functions of arbitrarily desired order, is proposed to interpolate the irregular data samples, while preserving all polynomials of the same spline order, with assured maximum order of approximation. Secondly, for real-time dynamic information extraction from an oscillatory signal via SST, a family of vanishing-moment and minimum-supported spline-wavelets (to be called VM wavelets) are introduced for on-line computation of the CWT and its derivative. The second objective of this paper is to apply the proposed real-time algorithm and VM wavelets to clinical applications, particularly to the study of the ``anesthetic depth'' of a patient during surgery, with emphasis on analyzing two dynamic quantities: the ``instantaneous frequencies'' and the ``non-rhythmic to rhythmic ratios'' of the patient's respiration, based on a one-lead electrocardiogram (ECG) signal. Indeed, the ``R-peaks'' of the ECG signal, which constitute a waveform landmark for clinical evaluation, are non-uniform samples of the respiratory signal. It is envisioned that the proposed algorithm and VM wavelets should enable real-time monitoring of ``anesthetic depth'', during surgery, from the respiration signal via ECG measurement.
\end{abstract}

\end{frontmatter}

\section{Introduction}

Digital representation of time series, signals, images, videos, and other information sources is ubiquitous in our daily life. In almost all application areas, digital representation is achieved by sampling or discretization of a continuous function. For efficiency, uniform sampling is used to give regular data samples. This means that sampling is carried out at uniformly (or equally) spaced points of the time or spatial domain. On the other hand, there are many important events for which only irregular data samples are available. These include trading data of the financial market, package data traffic \cite{Gunnarsson_Gustafsson_Gunnarsson:2004,Jacobsson_Andrew_Tang_Low_Hjalmarsson:2009}, measurement of star luminosity \cite{Vio_Strohmer_Wamsteker:2000} (due to weather condition or day/night difference), and clinical data, such as respiration signals (hidden in ECG measurements) \cite{moody_mark:1986}. Other irregular data samples are introduced by non-uniform sampling to meet certain requirements, such as speeding up MRI (magnetic resonance image) acquisition [38] and increasing performance of radar systems to avoid jamming \cite{Chen_Yaidyanathan:2008} and to minimize aliasing \cite{Ait-Sahalia_Mykland:2003,Bilinskis_Mikelson:1992}.

        In any case, for such application areas as mentioned above, we need effective methods to process non-uniform data samples in order to obtain the information of interest, and more importantly, to be able to apply the powerful tools that apply only to analog data representations. In other words, we need an effective D/A (digital to analog) conversion tool. Of course there are various D/A methods for non-uniform data samples introduced for different purposes that are available in the literature; for example, finding a proper building block under certain reconstruction requirements \cite{deBoor:1978,Benedetto:1992,Chui:1992,Eldar:2003}, applying non-parametric regression to irregular data fitting \cite{Fan_Gijbels:1996}, and iterative reconstruction algorithms or projection onto suitable convex sets \cite{Marvasti:2001,Gurin_Polyak_Raik:1967}, just to name a few. However, all of these and other approaches, including recovery of the continuous-time function from its Fourier transform, estimated directly from the irregular sample \cite{Lomb:1976,Moody:1993}, are not suitable for D/A conversion in a real-time or online system. Examples include real-time warning of a network under attack from non-uniformly sampled packet data traffic, and for the clinical application to be studied in this paper, real-time warning if a patient is entering a critical condition. 

A powerful signal processing tool that applies only to continuous-time signals is the synchrosqueezed continuous wavelet transform (SST), which requires knowledge of the continuous wavelet transform (CWT) of the signal \cite{Daubechies_Lu_Wu:2011}.  An important application of this transform is to extract information, such as the underlying dynamics, hidden in the signal representation \cite{Daubechies_Lu_Wu:2011,Chen_Cheng_Wu:2013}. The main benefit of choosing CWT is its polynomial cancelation property, which allows us to handle the trend in real time. However, there are two obstacles for real-time application of the SST. Firstly, an effective algorithm for D/A conversion of non-uniform samples is needed. Although a partial off-line solution to meet this challenge is proposed in \cite{Thakur_Wu:2011, Wu:2011Thesis} based on the synchrosqueezed short-time Fourier transform (STFT) of non-uniform data samples, there are various short-comings of this direct approach. In particular, due to the lack of sufficient vanishing moments, the possible trend that commonly exists in the signal cannot be eliminated. On the other hand, in order to achieve precise spectral information, the analysis wavelet for the CWT is assumed to have compactly supported Fourier transform \cite{Daubechies_Lu_Wu:2011}, and consequently the SST cannot be realized in real-time.  Of course an obvious suggestion is to replace the analysis wavelet by a compactly supported wavelet. However, the obstacle remains in that for real-time realization, the CWT has to be taken on a bounded time-interval, and that a high order of vanishing moments of the analysis wavelet is required for reasonably good performance of the SST, which in turn requires larger support of the wavelet.

        One of the main objectives of this paper is to introduce a unified approach to remove, or at least minimize, the two obstacles mentioned above. Firstly, for D/A conversion, a real-time algorithm, based on spline functions of arbitrarily desired order, is proposed to interpolate the non-uniform data, while preserving all polynomials of the same order, with assured maximum order of approximation. Secondly, for real-time dynamic information extraction from an oscillatory signal via SST, we introduce a family of analysis wavelets with minimum support in the time-domain, again in terms of B-splines of arbitrarily desired order, for both the interior and the boundary (i.e. at the end-points) of any bounded interval, with arbitrarily desired order of vanishing moments. We coin these analysis wavelets as VM wavelets (for maximum vanishing moments and minimum support). We give explicit formulas of the VM wavelets on arbitrarily desired knot sequences, and prove that for the VM wavelets $\psi_{m,n}$, in terms of the $m^{th}$ Cardinal B-splines (i.e. on integer knots) and with $n^{th}$ order vanishing moments, a suitable scaling of the centered wavelets $\psi_{m,n}$ is asymptotically the same as the $n^{th}$ derivative of the Gaussian function, as $m+n$ tends to infinity. With a VM wavelet as the mother wavelet of the CWT, computation of its derivative can be eliminated by replacing the CWT with a companion VM wavelet.

To demonstrate the usefulness of the proposed real-time algorithm and VM wavelets is applied to clinical applications, in the study of the anesthetic depth of a patient during surgery. More precisely, we will focus on analyzing two dynamic quantities: the instantaneous frequencies and the non-rhythmic to rhythmic ratios of the patient's respiration, by analyzing solely the one-lead electrocardiogram (ECG) signal. Indeed, the R-peaks of the ECG signal, that constitute a waveform landmark, are non-uniform samples of the respiratory signal. The proposed algorithm and VM wavelets enable us to study the possibility of real-time monitoring the anesthetic depth during surgery, from the respiration signal via ECG measurement.

        This paper is organized as follows. In Section \ref{section:blending}, the theoretical background of B-splines on arbitrary knot sequences, quasi-interpolation operation, complete local spline interpolation, and the blending operator are reviewed; a real-time algorithm for spline interpolation of non-uniform data samples is formulated; and the order of maximum order of approximation by the real-time interpolation algorithm is derived. The notion of VM wavelets is introduced in Section \ref{section:VMwavelet}, where both interior and boundary wavelets with minimum support are derived, with explicit formulas, even on arbitrary knot sequences. Furthermore, the performance of the VM wavelets is analyzed in terms of the side-lobe/main lobe ratios of their powers spectra; and their asymptotic behavior, as compared with derivatives of the Gaussian function, is also derived in Section \ref{section:VMwavelet}. In Section \ref{section:tvPS}, the synchrosqueezing transform (SST) is reviewed and the notion of time-varying power spectrum (tvPS) is discussed, with the goal of extracting dynamic information from an oscillating signal. Section \ref{section:testbed2} is devoted to the study of clinical applications of the proposed algorithm. In particular, the {\it blending ECG derived respiration (EDR) algorithm} is proposed to extract the respiratory dynamics from the single ECG signal in real-time and to study the relationship between anesthetic depth and respiratory dynamical features extracted from analyzing the morphology of the single lead ECG signal. 
\newline\newline
\textbf{Notation:} In this paper, the Fourier transform of a function $h$ in $L^2(\RR)$ is defined as $\hat{h}(\xi):=\int_{-\infty}^\infty h(x)e^{-ix\xi}\ud x$.

\section{Optimal real-time spline interpolation}\label{section:blending}

Let $\{x_n\}$, $n=0,1,\ldots$, be a time series contaminated with noise, where 
\begin{align}
x_n=g(t_n)+\epsilon_n,
\end{align}
is the non-uniform measurement of an underlying signal $g(t)$ at the time positions $t=t_n$. An important problem in signal processing is to convert $\{x_n\}$ on-line to an analog signal $\tilde{g}(t)$, where $t\geq t_0$, under certain requirements and having certain desirable properties. Here, $\{\epsilon_n\}$ is a sequence of additive noise, including instrumental noise, electoral contact noise, baseline drift/motion noise, and statistical random noise. Since there has been a lot of study on detection and removal of deterministic noise, it is safe to assume that $\{\epsilon_n\}$ is a random noise process. 

For (statistical) white noise $\{\epsilon_n\}$ with variance $\sigma^2>0$, the most efficient real-time algorithm to optimal recover $\{g(t_n)\}$ from $\{x_n\}$ is Kalman filtering. In a nutshell, the Kalman noise-removal filter is a prediction-correction algorithm that corrects the prediction $\hat{x}_{n,n-1}$ to yield the optimal estimation $\hat{x}_n=\hat{x}_{n,n}$ of $x_n$, in the sense that $\{\hat{x}_k\}_{k=0}^n$ is the weighted least-squares fit of the measurement $\{x_k\}_{k=0}^n$, with $1/\sigma^2$ as the weight. The interested reader is referred to \cite{Chui_Chen:2009}, where white noise is also extended to color noise.

In this paper, we are only concerned with real-time conversion of a ``clean'' digital signal to spline function representation. In other words, we will assume that $\hat{x}_n=x_n=g(t_n)$, for all $n=0,1,\ldots$. Let us first pose the problem and introduce some notations. The digital samples $g(t_n)$ are taken at
\begin{align}\label{nonuniform:definition:t_knot}
\ut:\, a=t_0<t_1<t_2<\ldots,
\end{align}
where $\{t_k\}$ may be irregular (or non-uniform), meaning that we allow $t_{k+1}-t_k\neq t_{j+1}-t_j$ for $k\neq j$. Let $m\geq 3$ be any desired integer and $\Pi_{m-1}$ denote the space of all polynomials of degree less than $m$. 

\begin{definition}{[Spline space]}
For any sequence 
\begin{align}\label{nonuniform:definition:s_knot}
\us:\,s_{-m+1}=s_{-m+2}=\ldots=a=s_0<s_1<s_2<\ldots,
\end{align}
we use the notation $S_{\us,m}:=S_{\us,m}[a,\infty)$ to denote the spline space of order $m$ with knot sequence $\us$; that is, $f\in S_{\us,m}$ if and only if $f\in C^{m-2}[0,\infty)\quad\mbox{and}\quad f|_{[s_k,s_{k+1}]}\in \Pi_{m-1}$ for all $k=0,1,\ldots$. 
\end{definition}

To formulate a locally supported basis of $S_{\us,m}$, we first introduce the notion of {\it truncated powers} 
\[
x_+^{m-1}:=(\max\{0,x\})^{m-1}.
\]
Then it is clear that the truncated power functions $(s_k-t)^{m-1}_+$, $k=0,1,\ldots$, are in $S_{\us,m}$. Since $(x_k-t)_+^{m-1}$, $k\geq 1$, have global support, we apply the $m$-th order divided differences to change them to locally supported functions. In general, the {\it divided differences} are defined by
\[
[\,u,\ldots,u\,]f:=\frac{f^{(l)}(u)}{l!}
\]
if there are $l+1$ entries in $[u,\ldots,u]$, and 
\[
[\,u_0,\ldots,u_n\,]f:=\frac{[\,u_1,\ldots,u_n\,]f-[\,u_0,\ldots,u_{n-1}\,]f}{u_n-u_0}
\]
if $u_0\leq u_1\leq\ldots\leq u_n$ and $u_n>u_0$. 

\begin{definition}{[B-spline \cite{deBoor:1978}]}
Fix $m>0$ and consider the knot sequence $\us$ of the spline space $S_{\us,m}$. The {\it normalized B-splines} are defined by applying 
the divided difference operation to the truncated power $(s_k-t)^{m-1}_+$, namely,
\begin{align}\label{definition:Nmktuniform}
N_{m,k}(t)=N_{\us,m,k}(t)=(s_{m+k}-s_k)[\,s_k,\ldots,s_{m+k}\,](\cdot - t)_+^{m-1},
\end{align}
for $k=-m+1,\ldots,0,1,2,\ldots$. 
\end{definition}
Numerically, we would apply the recursive formula to implement $N_{m,k}$ (see, for example, \cite[page 143 (6.6.12a)]{Chui:1997}).
It is also shown in \cite{deBoor:1978} that $\{N_{\us,m,k}\}$, $k=-m+1,\ldots,0,1,2,\ldots$ is a stable locally supported basis of the space $S_{\us,m}$ in that every function $f\in S_{\us,m}\cap L^\infty[a,\infty)$ has a unique spline series representation with coefficient sequence $\{c_k\}\in\ell^\infty$. Moreover, if $\delta_m:=\inf_k(t_{m+k}-t_k)>0$, then
\begin{align}
c(m,\delta_m)\|\{c_k\}\|_{\ell^\infty}\leq \left\| \sum_{k=-m+1}^\infty c_kN_{\us,m,k}(t)\right\|_{L^\infty[a,\infty)}\leq \|\{c_k\}\|_{\ell^\infty},
\end{align}
where $c(m,\delta_m)>0$ is a constant depending only on $m$ and the ``minimum knot spacing'' $\delta_m$. 

However, if we choose the sequence $\ut$ of time positions in (\ref{nonuniform:definition:t_knot}) as the knot sequences $\us$ in (\ref{nonuniform:definition:s_knot}) by attaching $t_{-m+1}=\ldots=t_0=a$ to $\ut$ as in (\ref{nonuniform:definition:s_knot}), there are possibly two complications. Firstly, for the spline space $S_{\ut,m}$ of odd order $m\geq 5$, computation of the spline interpolant 
\[
f(t)=\sum_{k}c_kN_{\ut,m,k}(t)
\]
of the ``clean'' non-uniform samples $\hat{x}_k=g(t_k)$, where $k=0,1,\ldots,n$, could be unstable, since the coefficient matrix
\[
A_{n+1}:=[N_{\ut,m,k}(t_j)]_{j=0,\ldots,n;\,k=-m+1,\ldots,n-m+1}
\]
may not be diagonal dominant. Secondly, for even order $m\geq 4$, although the matrix $A_{n+1}$ is usually diagonal dominant, the computational cost to obtain the coefficients $c_{-m+1},c_{-m+2},\ldots,c_{n-m+1}$ is high for non-uniform $\{t_j\}$ and large values of $n$. 

On the other hand, if the spline representer $f(t)$ is not required to interpolate the target data function $g(t)$ at $t=t_j$, $j=0,1,\ldots,n$ (that is, if $f(t_j)\neq g(t_j)$ is allowed), then the ``quasi-interpolation'' scheme introduced by de Boor and Fix \cite{deBoor_Fix:1973} meets the requirement of ``polynomial preservation'', in that for $g(t)=p(t)\in \Pi_{m-1}$ in a desired neighborhoods of some time sample $t^*$, then $f(t)=p(t)$ in this neighborhood. Polynomial preservation is important to facilitate the continuous wavelet transform (CWT), and hence the synchrosqueezing transform (SST) for oscillatory components separation and analysis, such as analysis of the respiratory signal from the ECG signal as will be discussed in Section \ref{section:tvPS} in this paper. In particular, if the analysis wavelet is also a compactly supported spline of order $m$, then the $m$-th order vanishing moment annihilates the $m$-th order Taylor polynomial expansion of $g(t)$, that facilitates in removing the trend of the signal $g(t)$. However, the quasi-interpolation scheme in \cite{deBoor_Fix:1973,deBoor:1978} requires derivative data values of $g(t)$ that are not available in our study. In \cite{Lyche_Schumake:1975} and \cite{Schumaker:1981}, derivatives of $g(t)$ are replaced by divided differences of $\{g(t_i)\}$, for our need. Other quasi-interpolation scheme including a comprehensive study in \cite{deVilliers:2012} have also been derived. But to the best of our knowledge, only the basis functions introduced in our work \cite{Chen_Chui_Lai:1988} have the real-time approximation feature. As mentioned above, quasi-interpolation introduced the error $g(t_i)-f(t_i)$ for $i=0,1,\ldots,n$. In \cite{Chui_Diamond:1991}, the notion of local interpolation is introduced to correct the errors, leading to the ``blending operator'', to be discussed in the following sub-section.

\subsection{Blending operator}
Let us first study the real-time quasi-interpolant introduced in \cite{Chen_Chui_Lai:1988}. We start from preparing some notations. For each $k=0,1,\ldots$, consider the Vandermonde determinant
\[
D(t_k,\ldots,t_{k+m+1}):=\text{det}\left[
\begin{array}{cccc}
1 & 1 & \ldots & 1\\
t_k & t_{k+1} & \ldots & t_{k+m-1}\\
\vdots & \vdots & \vdots &\vdots \\
t^{m-1}_k & t^{m-1}_{k+1} & \ldots & t^{m-1}_{k+m-1}
\end{array}
\right]
\]
and the determinant $D(t_k,\ldots, t_{k+j-1},\xi_j,t_{k+j+1},\ldots,t_k+m-1)$ obtained by replacing the $(j+1)$-st column in the definition of $D(t_k,\ldots,t_{k+m+1})$ by the column vector
\[
\xi_j:=[\,\xi^0(j,m),\ldots,\xi^{m-1}(j,m)\,]^T,
\]
where $\xi^0(j,m)=1$ and 
\[
\xi^i(j,m)=\frac{\sigma^i(t_{j+1},\ldots,t_{j+m-1})}{\left(\begin{array}{c}m-1\\ i\end{array}\right)}
\]
for $i=1,\ldots,m-1$ with $\sigma^i(r_1,\ldots,r_{m-1})$ being the classical symmetric functions defined by $\sigma^0(r_1,\ldots,r_{m-1})=1$ and for $i=1,\ldots,m-1$,
\[
\sigma^i(r_1,\ldots,r_{m-1})=\sum_{1\leq l_1<\ldots<l_i\leq m-1} r_{l_1}\ldots r_{l_i}.
\]

\begin{definition}{[Quasi-interpolation operator]}
Using the determinants introduced above, we apply the spline coefficients
\begin{align*}
a_{k,l}:=\frac{D(t_k,\ldots, t_{k+j-1},\xi_j,t_{k+j+1},\ldots,t_{k+m-1})}{D(t_k,\ldots,t_{k+m-1})}
\end{align*}
to formulate the compactly supported spline function
\begin{align*}
M_{\ut,m,k}(t):=\sum_{l=m-1}^{2m-2}a_{k,l-m+1}N_{\ut,m,k+l-m+1}(t)
\end{align*}
with $\text{supp}M_{\ut,m,k}=[t_{k-m+1},t_{k+m}]$. These basis functions provide a real-time implementation of the {\it quasi-interpolation operator}
\begin{align}\label{nonuniform:Q_m:definition}
(\mathsf{Q}_mg)(t)=\sum_{k}g(t_k)M_{\ut,m,k}(t).
\end{align}
\end{definition}

We summarize some of the properties of the quasi-interpolation operator $\mathsf{Q}_m$ in (\ref{nonuniform:Q_m:definition}).
\begin{lemma}
For any $m\geq 1$, $\mathsf{Q}_m$ possesses the polynomial preservation property
\[
(\mathsf{Q}_mp)(t)=p(t)
\]
for all $t\geq a$ and for all $p\in\Pi_{m-1}$, provided that the summation in (\ref{nonuniform:Q_m:definition}) is taken for all non-negative integers $k=0,1,\ldots$. Furthermore, in view of the support of $M_{\ut,m,k}$, it follows that
\[
\sum_{k=v-m+1}^{v+m-2}p(t_k)M_{\ut,m,k}(t)=p(t),\quad t\in[t_u,t_v]
\]
for all $p\in \Pi_{m-1}$. 
\end{lemma}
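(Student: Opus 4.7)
The plan is to reduce the identity to two classical facts about splines on arbitrary knot sequences -- \emph{Marsden's identity} and the \emph{Vandermonde (de~Boor--Fix) duality} encoded in the definition of the coefficients $a_{k,l}$. By linearity of $\mathsf{Q}_m$ it suffices to verify $(\mathsf{Q}_m\pi_j)(t)=\pi_j(t)$ for the monomial basis $\pi_j(t)=t^j$, $j=0,1,\ldots,m-1$, of $\Pi_{m-1}$.

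First I would invoke Marsden's identity on the knot sequence $\ut$,
\[
(t-y)^{m-1} \;=\; \sum_{k}\Bigl(\prod_{i=1}^{m-1}(t_{k+i}-y)\Bigr)\,N_{\ut,m,k}(t),
\]
and expand the product in powers of $y$ using the elementary symmetric polynomials $\sigma^i(t_{k+1},\ldots,t_{k+m-1})$. Comparing coefficients of $y^{m-1-j}$ on the two sides and invoking the normalization $\xi^j(k,m)=\sigma^j(\cdot)/\binom{m-1}{j}$ collapses the identity to its monomial form,
\[
t^j \;=\; \sum_{\nu} \xi^j(\nu,m)\,N_{\ut,m,\nu}(t), \qquad j=0,1,\ldots,m-1.
\]

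Next I would unpack the definition of $a_{k,l}$: by Cramer's rule the displayed ratio of determinants is precisely the assertion that, for each fixed $\nu$, the $m$ coefficients of $N_{\ut,m,\nu}$ arising in $M_{\ut,m,\nu},\,M_{\ut,m,\nu+1},\,\ldots,\,M_{\ut,m,\nu+m-1}$ solve the Vandermonde linear system with matrix $\bigl[\,t_{\nu+l}^{\,i}\,\bigr]_{i,l=0}^{m-1}$ and right-hand side $\bigl(\xi^0(\nu,m),\ldots,\xi^{m-1}(\nu,m)\bigr)^{\!\top}$. Substituting the B-spline expansion of $M_{\ut,m,k}$ into $(\mathsf{Q}_m\pi_j)(t)=\sum_k t_k^{\,j}\,M_{\ut,m,k}(t)$ and regrouping the double sum by the B-spline index $\nu$, the coefficient multiplying $N_{\ut,m,\nu}(t)$ reads off as the left-hand side of the $(j{+}1)$-th row of this Vandermonde system, which by construction equals $\xi^j(\nu,m)$. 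Feeding this back into the monomial Marsden identity above gives $(\mathsf{Q}_m\pi_j)(t)=\sum_\nu\xi^j(\nu,m)\,N_{\ut,m,\nu}(t)=t^j$, which is the first assertion.

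The local statement is an immediate corollary of compact support: since $\mathrm{supp}\,M_{\ut,m,k}=[t_{k-m+1},t_{k+m}]$, only the indices $k\in\{v-m+1,\ldots,v+m-2\}$ can contribute nonzero values on the interval under consideration, so truncating the global sum to this range leaves it unchanged. The one genuine obstacle I foresee is purely bookkeeping -- aligning three slightly different index conventions (the B-splines summed in $M_{\ut,m,k}$, the regrouping by $\nu$, and the sign/binomial conventions in the symmetric-function expansion of $\prod_i(t_{k+i}-y)$) and checking that Cramer's rule is applied to the correct column of the Vandermonde in the definition of $a_{k,l}$. No additional analytic input beyond Marsden's identity and elementary linear algebra is required.
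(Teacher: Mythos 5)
The paper does not actually print a proof of this lemma --- it is stated as a summary of properties of the quasi-interpolant imported from the Chen--Chui--Lai reference --- so there is no in-paper argument to compare against line by line. Your reconstruction is the standard and correct one, and almost certainly the intended one: Marsden's identity yields $t^j=\sum_\nu \xi^j(\nu,m)\,N_{\ut,m,\nu}(t)$ with precisely the normalized symmetric functions $\sigma^j/\binom{m-1}{j}$ appearing in the paper's definition of $\xi^i(j,m)$; the determinant ratio defining $a_{k,l}$ is Cramer's rule for a Vandermonde system whose right-hand side is that Marsden vector; and regrouping $\sum_k t_k^{\,j}M_{\ut,m,k}(t)$ by B-spline index exhibits the coefficient of each $N_{\ut,m,\nu}$ as one row of that system, hence as $\xi^j(\nu,m)$. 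The local statement from compact support is likewise correct in substance. Two caveats. First, the bookkeeping you flag is worse than usual because the paper's own formulas are internally inconsistent (the subscript $l$ on the left of the definition of $a_{k,l}$ versus $j$ on the right, and a claimed support $[t_{k-m+1},t_{k+m}]$ for $M_{\ut,m,k}$ that does not match the B-splines actually summed in its defining formula); relative to the displayed formula your identification of which $m$ functions $M_{\ut,m,k}$ contain a given $N_{\ut,m,\nu}$ is shifted in the wrong direction, though the argument is translation-covariant and survives once a consistent convention is fixed. Second, and more substantively, your regrouping argument as written handles the interior B-splines: near $t=a$ the quasi-interpolant sums only over $k\ge 0$, while the Marsden expansion of $t^j$ on $[a,\infty)$ also involves the boundary splines $N_{\ut,m,\nu}$ with $-m+1\le \nu\le -1$ arising from the stacked knots, whose regrouped coefficients are sums over fewer than $m$ data points. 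The claim ``for all $t\ge a$'' has its only nontrivial content there, so you should either verify that the boundary functionals still reproduce the Marsden coefficients or state explicitly that the construction is modified near the endpoint.
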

This local polynomial preservation property allows the CWT, and hence the SST, to annihilate the $(m-1)$-th degree Taylor polynomial approximation of the signal at $t_j$, where $u<j<v$.

We now turn to the discussion of the local interpolation operator, to be denoted by $\mathsf{R}_m$, which satisfies the interpolation property. To define $\mathsf{R}_m$, we may insert knots to $\ut$ by considering a new knot sequence $\us\supset\ut$ as in (\ref{nonuniform:definition:s_knot}). More precisely, we consider even and odd orders separately, as follows:

\vspace{0.4cm}
$\boldsymbol{(i)}$  For even $m\geq 4$, we set
\[
s_{mk/2}=t_k,\quad k=0,1,2,\ldots.
\]
That is, we insert $(m/2-1)$ knots in between two consecutive knots in $\ut$. For convenience, we may choose the new knots equally spaced in-between every pair of two consecutive knots. 

\begin{definition}{[Completely local spline basis function]}
Fix even $m\geq 4$. Let $N_{\us,m,j}$ be the $m$-th order B-spline with knot sequence $\us$. Then the {\it completely local spline basis function} can be defined by
\begin{align*}
L_{\us,m,j}(t):=\frac{N_{\us,m,m(j-1)/2}(t)}{N_{\us,m,m(j-1)/2}(t_j)}.
\end{align*}
\end{definition}

Since $t_j=s_{mj/2}$ is the ``centered'' knot and
\[
\text{supp}N_{\us,m,m(j-1)/2}=[\,s_{m(j-1)/2},s_{m(j+1)/2}\,]=[\,t_{j-1},t_{j+1}\,],
\]
we have
\begin{align}\label{nonuniform:property:Lmsj}
\left\{
\begin{array}{l}
\displaystyle L_{\us,m,j}(t_j)=1\\ \\
\displaystyle\text{supp}L_{\us,m,j}=[\,t_{j-1},t_{j+1}\,].
\end{array}
\right.
\end{align}
In view of (\ref{nonuniform:property:Lmsj}), it is clear that $L_{\us,m,j}(t_k)=\delta_{j-k}$, where the Kronecker delta notation is used. 
\vspace{0.4cm}

$\boldsymbol{(ii)}$ For odd $m\geq 3$, we may insert $\frac{m+1}{2}-1$ equally spaced new knots in between $[t_{2k},t_{2k+1}]$, and $\frac{m+1}{2}-2$ equally spaced new knows in between $[t_{2k+1},t_{2k+2}]$, for $k=0,1,\ldots$. Then, by considering even and odd indices ($j=2k$ and $j=2k+1$) separately, a similar construction as the even order setting yields completely local spline basis functions $L_{\us,m,j}(t)$ that has the property (\ref{nonuniform:property:Lmsj}) as the even order $m$.
\newline\newline
The above preparation provides a real-time implementation of the {\it local interpolation operator}, which satisfies the interpolation property due to (\ref{nonuniform:property:Lmsj}).

\begin{definition}{[Local interpolation operator]}
Fix $m\geq3$. For a given function $g\in C(\RR)$, the local interpolation operator $\mathsf{R}_m$ is defined by
\begin{align}\label{nonuniform:R_m:definition}
(\mathsf{R}_mg)(t):=\sum_k g(t_k) L_{\us,m,k}(t).
\end{align}
\end{definition}

We are now ready to apply (\ref{nonuniform:Q_m:definition}) to obtain the blending operator, denoted by $\mathsf{R}_m\oplus \mathsf{Q}_m$. 

\begin{definition}{[Blending operator]}
Fix $m\geq 3$ and $g\in C(\RR)$. The blending operator is defined as $\mathsf{P}_m:=\mathsf{R}_m\oplus \mathsf{Q}_m$, where
\begin{align*}
\mathsf{R}_m\oplus \mathsf{Q}_m:=\mathsf{Q}_m+\mathsf{R}_m(\mathsf{I}-\mathsf{Q}_m)=\mathsf{Q}_m+\mathsf{R}_m-\mathsf{R}_m\mathsf{Q}_m,
\end{align*}
and $\mathsf{I}$ is the identity operator. In particular, we have 
\begin{align}\label{nonuniform:definition:Pm}
(\mathsf{P}_mg)(t):=\sum_{k}g(t_k)M_{\ut,m,k}(t)+\sum_k\big[g(t_k)-\sum_j g(t_j)M_{\ut,m,j}(t_k)\big]L_{\us,m,k}(t).
\end{align}
\end{definition}

We remark that in the definition of $\mathsf{P}_m$, the two operators $\mathsf{R}_m$ and $\mathsf{Q}_m$ are not commutative. Let us summarize the two key properties of the blending operator in the following theorem.
\begin{theorem}
The blending operator $\mathsf{P}_m$ possesses both the polynomial preservation property of $\mathsf{Q}_m$ and the interpolatory property of $\mathsf{R}_m$.
\end{theorem}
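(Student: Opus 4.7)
The plan is to verify the two properties separately using the factored form $\mathsf{P}_m = \mathsf{Q}_m + \mathsf{R}_m(\mathsf{I}-\mathsf{Q}_m)$, which makes the blending structure transparent: we add a correction, computed by $\mathsf{R}_m$, to the quasi-interpolant $\mathsf{Q}_m g$, and the correction is supplied by the residual $g - \mathsf{Q}_m g$. The two desired properties should fall out almost immediately, one from the fact that $\mathsf{Q}_m$ kills polynomials in $\Pi_{m-1}$ (up to identity), the other from the nodal property of the basis $\{L_{\us,m,k}\}$.

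For polynomial preservation, I would fix $p \in \Pi_{m-1}$ and invoke the preceding lemma to get $(\mathsf{Q}_m p)(t) = p(t)$ for all $t \geq a$. Then the residual vanishes, $(\mathsf{I}-\mathsf{Q}_m)p \equiv 0$, so $\mathsf{R}_m(\mathsf{I}-\mathsf{Q}_m)p \equiv 0$, and therefore $(\mathsf{P}_m p)(t) = (\mathsf{Q}_m p)(t) = p(t)$. One small subtlety worth flagging is that the lemma's identity $(\mathsf{Q}_m p)(t) = p(t)$ is only globally valid when the sum in (\ref{nonuniform:Q_m:definition}) is taken over all $k \geq 0$; on a bounded interval $[t_u,t_v]$ one uses the localized version $\sum_{k=v-m+1}^{v+m-2} p(t_k) M_{\ut,m,k}(t) = p(t)$, and the argument goes through verbatim on that interval.

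For the interpolatory property, I would start from $(\mathsf{P}_m g)(t_k) = (\mathsf{Q}_m g)(t_k) + (\mathsf{R}_m h)(t_k)$, where $h := g - \mathsf{Q}_m g$. The key input is property (\ref{nonuniform:property:Lmsj}), which gives the Kronecker identity $L_{\us,m,j}(t_k) = \delta_{j-k}$. Substituting this into the definition (\ref{nonuniform:R_m:definition}) of $\mathsf{R}_m$ yields $(\mathsf{R}_m h)(t_k) = \sum_j h(t_j) L_{\us,m,j}(t_k) = h(t_k) = g(t_k) - (\mathsf{Q}_m g)(t_k)$. Adding back $(\mathsf{Q}_m g)(t_k)$ produces $(\mathsf{P}_m g)(t_k) = g(t_k)$, which is the desired interpolation identity.

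There is no real obstacle here; the only bookkeeping is to make sure the expansion in (\ref{nonuniform:definition:Pm}) is read correctly as $\mathsf{Q}_m + \mathsf{R}_m - \mathsf{R}_m \mathsf{Q}_m$ rather than as two decoupled summations, since the second sum implicitly evaluates $\mathsf{Q}_m g$ at the nodes $t_k$ before $\mathsf{R}_m$ acts. Once that is clear, the proof is essentially a two-line verification from the defining identities of $\mathsf{Q}_m$ and $L_{\us,m,k}$.
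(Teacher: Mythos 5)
Your proof is correct and follows essentially the same route as the paper's: polynomial preservation via $\mathsf{Q}_m p = p$ killing the correction term, and interpolation via the Kronecker property $L_{\us,m,j}(t_k)=\delta_{j-k}$ making $\mathsf{R}_m$ reproduce the residual at the nodes. The only difference is that you keep the correction grouped as $\mathsf{R}_m(\mathsf{I}-\mathsf{Q}_m)$ while the paper expands it to $\mathsf{R}_m-\mathsf{R}_m\mathsf{Q}_m$, which is the same algebra.
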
 
\begin{proof}
Indeed, for any $p\in \Pi_{m-1}$, since $\mathsf{Q}_mp=p$, we have
\[
(\mathsf{R}_m\oplus \mathsf{Q}_m)p=\mathsf{Q}_mp+\mathsf{R}_mp-\mathsf{R}_m(\mathsf{Q}_mp)=p+\mathsf{R}_mp-\mathsf{R}_mp=p.
\]
Also, $f:=(\mathsf{R}_m\oplus \mathsf{Q}_m)g\in S_{\ut,m}$ interpolates the discrete data $\{g(t_j)\}$ in that
\[
f(t_j)=(\mathsf{Q}_mg)(t_j)+(\mathsf{R}_mg)(t_j)-\mathsf{R}_m(\mathsf{Q}_mg)(t_j)=(\mathsf{Q}_mg)(t_j)+g(t_j)-(\mathsf{Q}_mg)(t_j)=g(t_j).
\]
Observe that the definition of $(\mathsf{P}_mg)(t)$ in (\ref{nonuniform:definition:Pm}) depends only on the discrete samples $g(t_k)$, $k=0,1,\ldots$ of $g(t)$ and that
\begin{align*}
f(t):=(\mathsf{P}_mg)(t)\in S_{\us,m}[0,\infty)
\end{align*}
since $\ut\subset \us$. As already discussed earlier, we have, firstly, the interpolatory property
\[
f(t_j)=g(t_j),\quad j=0,1,2,\ldots
\]
and secondly, the polynomial preservation property
\begin{align}\label{nonuniform:Pm_property2}
(\mathsf{P}_mp)(t)=p(t),\quad t\geq a,
\end{align}
for all $p\in \Pi_{m-1}$. In fact, if only $t\in[t_u,t_v]$ in (\ref{nonuniform:Pm_property2}) is desired, then the summation over $k$ in (\ref{nonuniform:definition:Pm}) is taken from $k=u-m+1$ to $k=v+m-2$, due to the finite support property of $M_{\ut,m,k}(t)$ and $L_{\us,m,k}(t)$. 
\end{proof}

To study how well the blending operator approximates functions that are not polynomials, we consider only knot sequences $\ut$ that satisfies the condition
\begin{align}\label{blending:proof:condition}
\frac{\sup_{0\leq i\leq k}|t_{k+i+m}-t_{k+i}|}{\inf_{0\leq i\leq k}|t_{k+i+1}-t_{k+i}|}\leq C_{\ut,k}
\end{align}
for some positive constant $C_{\ut,k}$, where $m>0$ is the order of the B-splines $N_{\ut,m,k}$ and $k\geq 0$ is the knot index. Under this condition, it has been shown in \cite[Lemma 2,1]{Chen_Chui_Lai:1988} that the coefficients $a_{k,j}$ in the definition of the quasi-interpolation operator (\ref{nonuniform:Q_m:definition}) (with $g(t_j)$ replaced by $a_{k,j}$) satisfy  
\[
|a_{k,j}|\leq \frac{1}{(m-1)!}C_{\ut,k}.
\]
Since the B-splines $N_{\ut,m,j}$ are non-negative and constitute a partition of unity, we have
\begin{align}\label{blending:proof:MtmkBound}
|M_{\ut,m,k}(t)|\leq \frac{1}{(m-1)!}C_{\ut,k}^{m-1}\sum_{l=m-1}^{2m-2}N_{\ut, m,k+l-m+1}(t)\leq \frac{1}{(m-1)!}C_{\ut,k}^{m-1}
\end{align}
for all $t\geq a$. With this preparation, we are ready to formulate our result on error estimation.
\begin{theorem}
Fix $m>1$. Let $s$ be an arbitrary integer with $0\leq s\leq m-1$, and $M_{\ut,m,k}(t)$ be the quasi-interpolation spline basis functions defined in (\ref{nonuniform:Q_m:definition}) in terms of the $m$-th order B-splines on a knot sequence $\ut$ that satisfies (\ref{blending:proof:condition}). Then for a chosen $k\geq0$, there exists a constant $C_{m,s,k}>0$ such that for every $g\in C^{s+1}[a,\infty)$, the error of spline interpolation at the knots $t_j$ by the blending operator is given by
\[
\|g-\mathsf{P}_mg\|_{L^\infty[t_k,t_{k+1}]}\leq C_{m,s,k}\|g^{(s+1)}\|_{L^\infty[t_k,t_{k+1}]}\delta_{\ut,k}^{s+1},
\]
where 
\begin{align}\label{blending:proof:definition:Cms}
C_{m,s,k}:=\frac{1}{s!}\left(2+\frac{m(m-2)}{(m-1)!}C_{\ut,k}^{m-1}\right)
\end{align} 
and
\[
\delta_{\ut,k}:=\max_{k+1\leq i\leq k+m}(t_{i+1}-t_i).
\]
\end{theorem}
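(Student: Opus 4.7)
The plan is to exploit the polynomial preservation property of $\mathsf{P}_m$ established in the previous theorem and thereby reduce the error to the size of a Taylor remainder. Fix $t\in[t_k,t_{k+1}]$ and pick an expansion point $t^\ast\in[t_k,t_{k+1}]$. Writing $g=T_s+R_s$, where $T_s$ is the degree-$s$ Taylor polynomial of $g$ about $t^\ast$, the hypothesis $s\leq m-1$ gives $T_s\in\Pi_{m-1}$, so $\mathsf{P}_mT_s=T_s$ and
$$
(g-\mathsf{P}_mg)(t) \;=\; R_s(t)-(\mathsf{P}_mR_s)(t).
$$
The integral form of the remainder $R_s(\tau)=\frac{1}{s!}\int_{t^\ast}^{\tau}(\tau-u)^{s}g^{(s+1)}(u)\,du$ yields $|R_s(\tau)|\leq \frac{1}{s!}\|g^{(s+1)}\|_{L^\infty}|\tau-t^\ast|^{s+1}$, and the very first ``$+1$'' in the factor $(2+\cdots)$ inside $C_{m,s,k}$ will come from estimating $|R_s(t)|$ directly by $\frac{1}{s!}\|g^{(s+1)}\|_{L^\infty}\delta_{\ut,k}^{s+1}$.

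For the term $(\mathsf{P}_mR_s)(t)$, I would substitute the explicit formula (\ref{nonuniform:definition:Pm}), writing (with dummy indices $i,l$ to avoid clashing with the fixed $k$)
$$
(\mathsf{P}_mR_s)(t)=\sum_{i}R_s(t_i)M_{\ut,m,i}(t)+\sum_{l}\Bigl[R_s(t_l)-\sum_{i}R_s(t_i)M_{\ut,m,i}(t_l)\Bigr]L_{\us,m,l}(t).
$$
For $t\in[t_k,t_{k+1}]$ only finitely many of the $M_{\ut,m,i}$ and $L_{\us,m,l}$ are non-zero, and their supports pin down the contributing knots $t_i$ and $t_l$ inside a window whose diameter is controlled by $\delta_{\ut,k}$ through the quasi-uniformity condition (\ref{blending:proof:condition}). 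On this window, Taylor's bound supplies $|R_s(t_i)|\leq\frac{1}{s!}\|g^{(s+1)}\|_{L^\infty}(c\,\delta_{\ut,k})^{s+1}$ at each active knot. Combining this with the pointwise estimate (\ref{blending:proof:MtmkBound}) for $M_{\ut,m,i}$ and the bound $|L_{\us,m,l}(t)|\leq 1$ inherited from (\ref{nonuniform:property:Lmsj}) (the spline is non-negative, normalized to $1$ at its central knot $t_l$, and supported on $[t_{l-1},t_{l+1}]$), the $\mathsf{Q}_m R_s$ and composite $\mathsf{R}_m\mathsf{Q}_m R_s$ contributions together produce the factor $\frac{m(m-2)}{(m-1)!}C_{\ut,k}^{m-1}$ after summing over the $O(m)$ active indices, while the pure $\mathsf{R}_m R_s$ correction supplies the second ``$+1$'' in $(2+\cdots)$.

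The main obstacle, in my view, is the bookkeeping in this last step: one must verify that \emph{every} knot $t_i$ appearing in $(\mathsf{R}_m\mathsf{Q}_mR_s)(t)$ — which requires evaluating $\mathsf{Q}_mR_s$ at a knot $t_l$ whose $M_{\ut,m,\cdot}$-support then drags in still further knots — lies within a $\delta_{\ut,k}$-window about $t^\ast$, so that a single Taylor estimate applies uniformly to all of them. This is precisely where the quasi-uniformity assumption (\ref{blending:proof:condition}) is indispensable: it bounds ratios such as $|t_{k+i+m}-t_{k+i}|/|t_{k+i+1}-t_{k+i}|$ by $C_{\ut,k}$, and this control propagates through the nested sums, both contributing the power $C_{\ut,k}^{m-1}$ in the final constant and keeping every relevant $|t_i-t^\ast|$ commensurate with $\delta_{\ut,k}$. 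Once this uniformity is secured, identifying the exact count of contributing $(i,l)$ pairs and showing it collapses to $m(m-2)$ is a routine though tedious combinatorial exercise.
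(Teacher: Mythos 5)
Your overall strategy -- subtract a degree-$s$ Taylor polynomial, invoke the polynomial reproduction $\mathsf{P}_mT_s=T_s$, and then bound the operator applied to the remainder using (\ref{blending:proof:MtmkBound}) and $|L_{\us,m,l}|\leq 1$ on the finitely many active indices -- is the same skeleton as the paper's proof. The difference is that the paper does not apply $\mathsf{P}_m$ to the remainder function $R_s$ itself; it writes the remainder in Peano-kernel form, $g(t)-T_s(t)=\int_{t_k}^{t_{k+1}}\frac{g^{(s+1)}(\tau)}{s!}(t-\tau)_+^s\,\ud\tau$, pushes $\mathsf{P}_m$ onto the truncated power $h(\cdot,\tau)=(\cdot-\tau)_+^s$ for each fixed $\tau\in[t_k,t_{k+1}]$, and only afterwards integrates in $\tau$ to pick up the extra factor $\delta_{\ut,k}$.

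This is not a cosmetic difference, and it is exactly where your plan has a gap. In $(\mathsf{P}_mR_s)(t)$ for $t\in[t_k,t_{k+1}]$, the active basis functions $M_{\ut,m,i}$ and the nested sums in $\mathsf{R}_m\mathsf{Q}_m$ drag in knots $t_i$ with $i<k$, down to roughly $i=k-m+1$. At those knots $R_s(t_i)$ does not vanish, its Taylor bound involves $\|g^{(s+1)}\|$ on $[t_i,t^\ast]$, which lies \emph{outside} $[t_k,t_{k+1}]$, and the distances $|t_i-t^\ast|$ are \emph{not} controlled by $\delta_{\ut,k}=\max_{k+1\leq i\leq k+m}(t_{i+1}-t_i)$: both this maximum and the quasi-uniformity condition (\ref{blending:proof:condition}) only constrain spacings with indices $\geq k$, so the left-hand gaps $t_k-t_{k-1},\dots$ are entirely unconstrained by the hypotheses. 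Your appeal to (\ref{blending:proof:condition}) to keep ``every relevant $|t_i-t^\ast|$ commensurate with $\delta_{\ut,k}$'' therefore fails for $i<k$, and as written your argument would only yield a bound with $\|g^{(s+1)}\|$ and a mesh size taken over a two-sided window -- a genuinely weaker statement than the theorem. The missing idea is the one-sided support of the Peano kernel: since $h(t_j,\tau)=(t_j-\tau)_+^s=0$ for all $j\leq k$ and all $\tau\in[t_k,t_{k+1}]$, every contribution from knots to the left of $t_k$ vanishes identically, which is what confines both the norm of $g^{(s+1)}$ and the mesh quantities to the right of $t_k$ and produces the stated constant. If you replace your direct remainder $R_s$ by this kernel representation, the rest of your bookkeeping goes through essentially as you describe.
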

\begin{proof}
For any fixed $k\geq 0$, consider the Taylor polynomial expansion of $g$ at $t_k$
\[
g(t)=\sum_{j=0}^s\frac{g^{(j)}(t_k)}{j!}(t-t_k)^j+\int_{t_k}^{t_{k+1}}\frac{g^{(s+1)}(\tau)}{s!}h(t,\tau)\ud \tau,
\]
where $t\in [t_k,t_{k+1}]$ and $h(t,\tau):=(t-\tau)_+^s$. Since the blending operator $\mathsf{P}_m$ preserves all polynomials of degree $\leq m-1$ and $s\leq m$ is arbitrary, we have
\[
g(t)-\mathsf{P}_mg(t)=\int_{t_k}^{t_{k+1}}\frac{g^{(s+1)}(\tau)}{s!}\big[h(t,\tau)-(\mathsf{P}_mh(\cdot,\tau))(t)\big]\ud \tau.
\]
In the following, we suppress the variable of integration $\tau$ by writing $h(t)=h(t,\tau)$, so that
\begin{align*}
&h(t)-(\mathsf{P}_mh)(t)\\
&\quad=h(t)-\sum_{j=k-m+1}^{k+m-2}h(t_j)M_{\ut,m,j}(t)-\sum_{j=k-1}^{k+1}\left[h(t_j)-\sum_{l=j-m+1}^{j+m-2}h(t_l)M_{\ut,m,l}(t_j)\right]L_{\us,m,j}(t).
\end{align*}
Now, observe that since $h(t_j)=(t_j-\tau)_+^s$ and $\tau\in[t_k,t_{k+1}]$, we have $h(t_j)=0$ for all $j\leq k$. Hence,
\begin{align*}
&h(t)-(\mathsf{P}_mh)(t)\\
&\quad=h(t)-\sum_{j=k+1}^{k+m-2}h(t_j)M_{\ut,m,j}(t)-h(t_{k+1})L_{\us,m,k+1}(t)+\sum_{l=k+1}^{k+m-1}h(t_l)M_{\ut,m,l}(t_{k+1})L_{\us,m,k+1}(t).
\end{align*}
From (\ref{blending:proof:MtmkBound}) and the fact that $|L_{\us,m,j}(t)|\leq 1$ for all $t\in[t_k,t_{k+1}]$, we have
\begin{align*}
|h(t)-(\mathsf{P}_mh)(t)|\leq &\,2\delta_{\ut,k}^s+\frac{1}{(m-1)!}C_{\ut,k}^{m-1}(\delta_{\ut,k}^s+2\delta_{\ut,k}^s+\ldots+(m-2)\delta_{\ut,k}^s)\\
&\quad+\frac{1}{(m-1)!}C_{\ut,k}^{m-1}(\delta_{\ut,k}^s+2\delta_{\ut,k}^s+\ldots+(m-1)\delta_{\ut,k}^s)\\
=\,&\left(2+\frac{m(m-2)}{(m-1)!}C_{\ut,k}^{m-1}\right)\delta_{\ut,k}^s.
\end{align*}
Therefore, it follows that
\begin{align*}
\|g-\mathsf{P}_mg\|_{L^\infty[t_k,t_{k+1}]}&\leq \int_{t_k}^{t_{k+1}}\frac{g^{(s+1)}(\tau)}{s!}\left(2+\frac{m(m-2)}{(m-1)!}C_{\ut,k}^{m-1}\right)\delta_{\ut,k}^s\ud \tau\\
&\leq C_{m,s,k}\|g^{(s)}\|_{L^\infty[t_k,t_{k+1}]}\delta_{\ut,k}^{s+1}.
\end{align*}
\end{proof}
In conclusion, the blending operator, as a local spline interpolation operator, achieves the optimal interpolation error rate compared with the traditional spline interpolation operator. In addition, the error depends only on the local data profile, which allows the real-time implementation with the optimal error rate.

\subsection{Real-time implementation}
We conclude our discussion by giving a brief description of a real-time computational scheme to compute $f(t)$ for the in-coming data samples $g(t_0),g(t_1),\ldots$. For convenience, we only consider even order $m$. The formulation for the odd order is similar but slightly more complicated. 

\begin{algorithm}[h!]
  \begin{algorithmic}
\STATE First, pre-compute the B-spline values $N_{\us,m,l}(t_j)=:n_{l,j}$. Then for each $k$, since $M_{\ut,m,k}\in S_{\us,m}$, there exists a finite sequence $\{b_{k,l}\}$ in the formulation of  
\[
M_{\ut,m,k}(t)=\sum_{l}b_{k,l}N_{\us,m,l}(t).
\]
Also pre-compute $d_{k,j}=M_{\ut,m,k}(t_j)$.

Now, while the data sequence $\{g(t_k)\}$ is acquired, compute
\[
\tilde{g}_l=\sum_k b_{k,l}g(t_k)
\]
and simultaneously compute
\[
g^*_l=\frac{g(t_l)-\sum_{j} d_{l,j}g(t_j)}{n_{m(l-1)/2},l},
\]
and then up-sample $\{g^*_l\}$ by $m(l-1)/2$; that is, set
\[
g^{\#}_{m(k-1)/2}=g^*_k,\quad\mbox{and }g^{\#}_l=0\mbox{ otherwise}.
\]
Then, we have an on-line computational scheme for the quasi-interpolation spline interpolation:
\begin{align*}
f(t)=\sum_{l=-m+1}^n(\tilde{g}_l-g^{\#}_l)N_{\us,m,l}(t)
\end{align*}
for increasing number of samples from $g(t_n)$ to $g(t_{n+1})$, $\ldots$; and this can be implemented for real-time D/A conversion. 
\end{algorithmic}
\caption{Real-time implementation of the blending operator}
\label{alg:Blending}
\end{algorithm}
For uniform sampling $\{g(k\tau)\}$, $k=0,1,2,\ldots$, where $\tau>0$, a very simple algorithm is derived in \cite[p. 114-117]{Chui:1992}.

\begin{remark}
For larger values of the spline order $m$, observe that the support of the completely local spline basis function $L_{\us,m,j}(x)$ could be too small, perhaps resulting in some undesirable ``bumpy'' shape of the interplant $\mathsf{P}_mg$ of the data function $g$. Therefore, at the expense of higher computational cost and slower on-line performance, local interpolants with larger support, such as approximate modification of those constructed in \cite{Chui_DeVilliers:1996} could be constructed to replace the simple interpolants $L_{\us,m,j}(x)$.
\end{remark}

\section{Vanishing-moment (VM) wavelets with minimum support}\label{section:VMwavelet}

In this section, we introduce the notion of vanishing-moment (VM) wavelets on bounded intervals. In Subsection \ref{subsection:VM}, we first disclose the method of construction of those with minimum support (and maximum order of vanishing moments) in terms  of the B-splines $N_{\us,m,k}(t)$ of arbitrary order $m\geq 2$ on the knot sequence $\us$ in (3), with the following modification:
\begin{equation}\label{equation:underlines}
\us:s_{-m+1}=\ldots=s_0=0<s_1<\ldots<N=s_N=\ldots=s_{N+m-1}
\end{equation}
where $s_j=j$, for $j=0,\ldots,N$. More precisely, we will first introduce the wavelets $\psi_{\us,m,j}(x)$, $j=-m+1,\ldots,2N-m-1$, that, up to any constant multiple, are uniquely determined by the following governing conditions:
\begin{equation}\label{condition:wavelet:definition}
\psi_{\tus,m;n,j}(x):=\sum_{k=j}^{n+j}q_{n;j,k}N_{\frac{1}{2}\us,m,k}(x) 
\end{equation}
and
\begin{equation}\label{condition:VM}
\int_0^Nx^l \psi_{\tus,m;n,j}(x)\ud x=0,\mbox{ for }l=0,1,\ldots,n-1, 
\end{equation}
where the knot sequence $\frac{1}{2}\us$ is defined by 
\begin{equation}\label{equation:tilde_s}
\tus:=\frac{1}{2}\us :\,\tilde{s}_{-m+1}=\ldots=\tilde{s}_0=0<\tilde{s}_1<\ldots<\tilde{s}_{2N-1}<\tilde{s}_{2N}=2N=\ldots=\tilde{s}_{2N+m-1},
\end{equation}
with $\tilde{s}_j=\frac{1}{2}j$, for $j=0,\ldots,2N$, by ``halving'' the knot sequence $\us$, but with extension from $j=N$ to $j=2N$ to fill the given bounded interval $[0,N]$.

Observe that in view of the $n$-th order moment condition in (\ref{condition:VM}), the $n+1$ coefficients $q_{n;j,k}$, $k=j,\ldots,j+n$, in (\ref{condition:wavelet:definition}), we are assured that the non-trivial VM wavelets $\psi_{\tus,m;n,j}(x)$ are unique, up to a non-zero constant multiple. Indeed, since $N_{\tilde{\boldsymbol{s}},m,k}(x)$ reproduces polynomials \cite{Chui:1992} and there are $n$ equations,  when $n\leq m$, the solution to the $n+1$ coefficients are free up to $\RR$. In addition, the VM wavelets $\psi_{\tus,m;n,j}(x)$ have minimum support:
\begin{equation*}
\text{supp}\,\psi_{\tus,m;n,j}=\big[\,\tilde{s}_j,\tilde{s}_{j+m+n}\,\big],\quad j=-m+1,\ldots,2N-m-1, 
\end{equation*}
so that the support of the ``interior wavelets'', with $j=0,\ldots,2N-2m$, are given by
\begin{equation*}
\text{supp}\,\psi_{\tus,m,j}=\big[\,\frac{j}{2},\frac{j+m+n}{2} \,\big].
\end{equation*}

Since (almost) all wavelets in the literature embrace the multi-resolution analysis (MRA) structure and are generated by the operations of translation and dilation, we will first construct our VM (spline) wavelets on uniform knots in Subsection \ref{subsection:VM}. In addition, since (almost) all spline wavelets introduced in the wavelet literature restrict the order (i.e. number) $n$ of vanishing moments of the wavelets not to exceed the order $m$ of the spline function (i.e. $n\leq m$), we only consider $n=m$ in Subsection \ref{subsection:VM}. After all, VM wavelets for the MRA setting could have their place in further theoretical development and applications in the future. However, for our application of wavelets to the synchrosqueezed continuous wavelet transform in this paper, the MRA structure is not required. In Subsection \ref{subsection:VMextension}, we will extend the VM spline wavelets to spline functions on arbitrary knot sequences and with arbitrary order $n$ of vanishing moments, and allow $n>m$. For example, for $m=1$, it follows from our general result in Theorem \ref{uniform:VMwaveletProperty} that a compactly supported piecewise constant wavelet $\psi_{\us,1,n}(x)$, with finitely many jump discontinuities on an arbitrarily chosen knot sequence $\us$, can be constructed to have any desirable number $n$ of vanishing moments.

\subsection{VM wavelets with the same number of vanishing moments as the spline order}\label{subsection:VM}

This subsection is focused on MRA vanishing moment wavelets on a bounded interval. The general VM wavelets will be discussed in Subsection \ref{subsection:VMextension}.
Let us first consider the special case where the order $n$ of vanishing moments agrees with that of the VM spline wavelets. This is the initial investigation that leads to the general theory to be discussed in Subsection \ref{subsection:VMextension}. In addition, for our application to the SST, to be studied in Section \ref{section:tvPS}, these are the preferred wavelets on a bounded interval. For $n=m$, we simplify the notation by setting
\[
\psi_{\tus,m,j}(x):=\psi_{\tus,m;m,j}(x);\quad q_{j,k}:=q_{n;j,k}.
\]
Observe that the length of the support of the interior wavelets $\psi_{\tus,m,j}$ (i.e. for $j=0,\ldots,2N-2m$) is the same as the length of the support of the interior B-splines $N_{\us,m,k}(x)=N_{\us,m,0}(x-j)$, $j=0,\ldots,2N-m-1$, that generate the interior VM wavelets. There are $2(m-1)$ boundary 
VM wavelets, with $\psi_{\tus,m,j}$, $j=-m+1,\ldots,-1$ for the end-point $s_0=0$, and $\psi_{\tus,m,j}$, $j=2N-2m+1,\ldots,2N-m-1$ for the other end-point $s_N=\tilde{s}_{2N}=0$. It follows from the formulation (\ref{condition:wavelet:definition}) for these boundary wavelets that their supports are
\begin{equation*}
\text{supp}\,\psi_{\tus,m,j}=\big[\,0,\frac{2m+j}{2}\,\big],\quad j=-m+1,\ldots,-1,
\end{equation*}
and
\begin{equation*}
\text{supp}\,\psi_{\tus,m,j}=\big[\,\frac{j}{2},N\,\big],\quad j=2N-2m+1\ldots,2N-m-1.
\end{equation*}

For each $j=-m+1,\ldots,2N-m-1$, to compute the sequence 
\begin{equation*}
\uq_j:=\{q_{j,j},\ldots,q_{j,m+j}\},
\end{equation*}
we may simply apply (\ref{condition:VM}) to obtain the homogeneous system of $m$ linear equations with $m+1$ unknowns in $\uq_j$:
\[
\sum_{k=j}^{m+j}\left(\int_0^N x^lN_{\tilde{\boldsymbol{s}},m,k}(x)\ud x\right)q_{j,k}=0;
\]
or equivalently,
\begin{equation}\label{equation:evaluate_qij}
\sum_{k=j}^{m+j}\left(\int_0^N x^lN_{\us,m,k}(x)\ud x\right)q_{j,k}=0;
\end{equation}
which clearly has the same solution (or null) space. We next introduce the family $\{\underline{\boldsymbol{c}}^l\}$, $l=0,\ldots,m-1$, of sequences 
\[
\underline{\boldsymbol{c}}^l:=\{c^l_{-m+1},\ldots,c^l_{N-1}\},
\] 
defined by
\begin{equation*}
c^l_j:=(-1)^l\frac{l!}{(m-1)!}\left[\frac{\ud^{m-l-1}}{\ud t^{m-l-1}}\prod^{m-1}_{i=1}(t-s_{i+j})\right]\Big|_{t=0}.
\end{equation*}
Then it follows from Marsden's identity \cite{Marsden:1970} that 
\begin{equation*}
x^l=\sum_{u=-m+1}^{N-1}c^l_uN_{\us,m,u}(x),\quad x\in[0,N],
\end{equation*}
for $l=0,\ldots,m-1$ (see \cite[p.6 Eq(1.12)]{Chui:1988}). Therefore, the linear system (\ref{equation:evaluate_qij}) can be reformulated as
\begin{equation}\label{equation:dq:product}
\sum_{k=j}^{m+j}d^l_kq_{j,k}=0,\quad l=0,\ldots,m-1,
\end{equation}
where 
\begin{equation}\label{definition:dlk}
d^l_k:=\sum_{u=-m+1}^{N-1}c^l_u\langle N_{\us,m,k},N_{\us,m,u}\rangle.
\end{equation}
Since it can be shown that each of the $(m+1)\times m$ matrices
\begin{equation*}
A_j=\big[\,d^l_k\,\big]_{j\leq k\leq m+j,\, 0\leq l\leq m-1},
\end{equation*}
where $j=-m+1,\ldots,2N-m-1$, has full rank $m$, the null space of (\ref{equation:dq:product}) has dimension $1$; that is, the VM wavelets $\psi_{\us,m,j}(x)$, $j=-m+1,\ldots,2N-m-1$, in (\ref{condition:wavelet:definition}) are unique, up to a (non-zero) constant multiple.

The remaining computational scheme to be discussed is the computation of the inner products in (\ref{definition:dlk}). For convenience, we drop the subscript $\underline{s}$ in the B-spline; that is, $N_{m,k}=N_{\us,m,k}$. 
For interior B-spline, $N_{m,k}$ and $N_{m,u}$, that is, $0\leq k,u\leq N-m$, these B-splines are simply integer translations of the B-spline $N_{m,0}(x)$, so that
\begin{align*}
\langle N_{m,k},N_{m,u}\rangle&=\int_0^N N_{m,0}(x-k)N_{m,0}(x-u)\ud x\\
&=\int_0^N N_{m,0}(x)N_{m,0}(x-(u-k))\ud x\\
&=\int_0^N N_{m,0}(m-x)N_{m,0}(x-(u-k))\ud x
\end{align*}
since $N_{m,0}(m-x)=N_{m,0}(x)$ by the symmetric property of $N_{m,0}(x)$. Hence, by the convolution property of cardinal B-spline with integer knots, we have
\begin{align}\label{NmkNmu_innerproduct}
\langle N_{m,k},N_{m,u}\rangle&=\int_{-\infty}^{\infty} N_{m,0}((m-u+k)-x)N_{m,0}(x)\ud x=N_{2m,0}(m-u+k).
\end{align}
There are various methods for evaluating the cardinal B-spline $N_{2m,0}(x)$ at integers $x=v:=m-u+k$. The most popular one is ``weighted differencing'' of two lower order B-spline, namely,
\begin{align*}
N_{2m,0}(v)&=\frac{1}{2m-1}\left[\,vN_{2m-1,0}(v)+(2m-v)N_{2m-1,0}(v-1)\,\right]\\
N_{2m-1,0}(w)&=\frac{1}{2m-2}\left[\,wN_{2m-2,0}(w)+(2m-1-w)N_{2m-2,0}(w-1)\,\right]
\end{align*}
for $v,w\in\ZZ$, and so forth, till we arrive at 
\[
N_2(v)=\delta(v),\quad v\in\ZZ;
\]
(see, for example, \cite[p.86, Eq(4.2.4)]{Chui:1992}). Another, perhaps more efficient, algorithm is the algorithm by applying the generalized Pascal triangle as described in \cite[p.190-p.191]{Chui:1992}. For example, for $m=4$ and $m=5$, the non-zero values of $N_{2m,0}(v)$, $v\in\ZZ$, are listed in row \#7 and row \#9, respectively, in \cite[p.191, Table 6.3.1]{Chui:1992}, namely
\vspace{.2cm}
\begin{enumerate}
\item $7!N_8(v)=1,120,1191,2416,1191,120,1$, for $v=1,\ldots,7$ respectively.
\item $9!N_{10}(v)=1,502,14608,88234,156190,88234,14608,502,1$, for $v=1,\ldots,9$ respectively.
\end{enumerate}
\vspace{.2cm}
On the other hand, if one of the two B-splines $N_{m,k}$ and $N_{m,u}$ in the inner product $\langle N_{m,k},N_{m,u}\rangle$ is not an interior B-spline, then the convolution property is not valid. In this case, the algorithm developed in \cite{Chui_Lai:1987} (see also \cite[p.12]{Chui:1988}) for computing each polynomial piece of the B-spline $N_{m,k}(u)$ on the interval $[j,j+1]$, $j=k,\ldots,k+m-1$, in terms of Bernstein polynomials can be applied. We remark that an efficient scheme for computing the inner product on $[0,1]$ of two Bernstein polynomials of the same degree is available in the literature.

\begin{example}{[VM wavelets $\psi_{\us,4,j}(x)$ in (\ref{condition:wavelet:definition}) with $m=4$]}
\begin{enumerate}
\item For the interior VM wavelets; that is, for $j=0,\ldots,2N-8$, the coefficient sequence $\uq_j=\{q_{j,j},\ldots,q_{j,j+4}\}$ is given by any non-zero constant multiple of $\{1,-4,6,-4,1\}$.
\item For the $m-1=4-1=3$ boundary VM wavelets corresponding to the end-point $x=0$, we only list $\psi_{\tus,4,-2}(x)$ and $\psi_{\tus,4,-1}(x)$ since we do not need $\psi_{\tus,4,-3}(x)$ which does not vanish at $x=0$.
\begin{enumerate}
\item $\uq_{-1}=\{q_{-1,-1},q_{-1,0},q_{-1,1},q_{-1,2},q_{-1,3}\}$ is any non-zero constant multiple of 
\[
\Big\{\,\frac{7}{3},\frac{-319}{60},\frac{101}{15},\frac{-25}{6},1\,\Big\}.
\]
\item $\uq_{-2}=\{q_{-2,-2},q_{-2,-1},q_{-2,0},q_{-2,1},q_{-2,2}\}$ is any non-zero constant multiple of 
\[
\Big\{\,1,\frac{-116}{25},\frac{919}{100},\frac{-57}{5},6\,\Big\}.
\]
\end{enumerate}
\item For the boundary VM wavelet corresponding to the end-point $x=N$, we apply the symmetry property to write
\begin{align*}
\psi_{\tus,4,2N-9}(x)&=\psi_{\tus,4,-1}(N-x),\\
\psi_{\tus,4,2N-8}(x)&=\psi_{\tus,4,-2}(N-x),\\
\psi_{\tus,4,2N-7}(x)&=\psi_{\tus,4,-3}(N-x);
\end{align*}
but, again, since $\psi_{\tus,4,2N-7}(x)(N)=\psi_{\tus,4,-3}(0)$ does not vanish, we only need 
$\psi_{\tus,4,2N-9}(x)$ and $\psi_{\tus,4,2N-8}(x)$ in our application.
\end{enumerate}
\end{example}

Motivated by the above example of interior VM spline-wavelets $\psi_{\tus,4,j}(x)$ with $m=4$, let us consider the general setting
\begin{align}\label{definition:psim}
\psi_m(x):=\sum_{k=0}^m(-1)^k\left(\hspace{-4pt}
\begin{array}{c}
m\\ k
\end{array}\hspace{-4pt}\right)N_m(2x-k),
\end{align}
for an arbitrary integer $m\geq 1$, where
\begin{align}\label{definition:Nm}
N_m(x):=N_{m,0}(x)=\int_0^1N_{m-1}(x-t)\ud t,
\end{align}
defined recursively, with $N_1(x)=\chi_{[0,1)}(x)$. Observe that $\psi_1(x)=N_1(2x)-N_1(2x-1)$ is the Haar wavelet. 
\begin{theorem}\label{uniform:VMwaveletProperty}
For any integer $m\geq 2$ and $j=0,\ldots,2N-2m$, the VM wavelets defined in (\ref{condition:wavelet:definition}) and (\ref{condition:VM}) are (any non-zero constant multiple of)
\[
\psi_{m,j}(x)=\psi_m(x-j/2),\quad 0\leq j\leq 2(N-m),
\]
(called the interior VM wavelets), where $\psi_m(x)$ is defined in (\ref{definition:psim}). Furthermore, the Fourier transform of $\psi_m(x)$ is given by
\begin{align}\label{hatpsiomega}
\hat{\psi}_m(\omega)=\frac{1}{2}(i\omega/2)^m\hat{N}_{2m}(\omega/2);
\end{align}
or equivalently,
\begin{align}\label{psimx}
\psi_m(x)=N^{(m)}_{2m}(2x).
\end{align}
\end{theorem}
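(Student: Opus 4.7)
The plan is to exhibit $\psi_m(x-j/2)$ as a non-trivial solution of the defining constraints (\ref{condition:wavelet:definition}) and (\ref{condition:VM}); the uniqueness-up-to-scalar already established via the full-rank matrices $A_j$ will then identify these with $\psi_{\tus,m,j}$ up to a constant. Both (\ref{hatpsiomega}) and (\ref{psimx}) will be read off from one direct Fourier computation, and the vanishing moments will themselves fall out of (\ref{hatpsiomega}).

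First, I observe that on the uniform halved knot sequence $\tilde s_j=j/2$, the normalized B-splines are simply scaled cardinal splines, $N_{\tus,m,k}(x)=N_m(2x-k)$, by a change of variables in (\ref{definition:Nmktuniform}). Substituting into (\ref{definition:psim}) and translating by $j/2$ gives
\[
\psi_m(x-j/2) = \sum_{\ell=0}^m (-1)^\ell \binom{m}{\ell} N_m(2x-j-\ell) = \sum_{k=j}^{j+m}(-1)^{k-j}\binom{m}{k-j}\,N_{\tus,m,k}(x),
\]
which is precisely of the form (\ref{condition:wavelet:definition}) with $q_{j,k}=(-1)^{k-j}\binom{m}{k-j}$. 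Furthermore, for $0\le j\le 2(N-m)$, the support $[j/2,(j+2m)/2]$ of $\psi_m(\cdot-j/2)$ lies inside $[0,N]$, so each moment integral in (\ref{condition:VM}) coincides with an integral over all of $\RR$.

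Next, I compute $\hat\psi_m$. Using $\widehat{N_m(2\cdot-k)}(\omega)=\tfrac12 e^{-ik\omega/2}\hat N_m(\omega/2)$ together with the binomial theorem,
\[
\hat\psi_m(\omega) = \tfrac12\hat N_m(\omega/2)\sum_{k=0}^{m}(-1)^k\binom{m}{k}e^{-ik\omega/2} = \tfrac12\hat N_m(\omega/2)\bigl(1-e^{-i\omega/2}\bigr)^m.
\]
Combining this with the standard cardinal-spline formula $\hat N_m(\omega) = \bigl((1-e^{-i\omega})/(i\omega)\bigr)^m$ and regrouping factors yields
\[
\hat\psi_m(\omega)=\tfrac12\,\frac{(1-e^{-i\omega/2})^{2m}}{(i\omega/2)^m} = \tfrac12(i\omega/2)^m\hat N_{2m}(\omega/2),
\]
which is (\ref{hatpsiomega}). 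The explicit factor $(i\omega/2)^m$, together with $\hat N_{2m}(0)=1$, shows that $\hat\psi_m$ has a zero of order $m$ at the origin, so the identity $\hat\psi_m^{(\ell)}(0)=(-i)^\ell\int x^\ell\psi_m(x)\,\ud x$ forces $\int x^\ell\psi_m(x)\,\ud x=0$ for $0\le \ell\le m-1$; a binomial expansion of $(y+j/2)^\ell$ then propagates the vanishing moments to every translate $\psi_m(x-j/2)$. Coupled with the expansion of Step 1 and the previously established uniqueness, this identifies $\psi_{m,j}(x)=\psi_m(x-j/2)$ up to a non-zero constant. Finally, the scaling and differentiation rules give
\[
\widehat{N_{2m}^{(m)}(2\,\cdot)}(\omega)=\tfrac12\,\widehat{N_{2m}^{(m)}}(\omega/2)=\tfrac12(i\omega/2)^m\hat N_{2m}(\omega/2)=\hat\psi_m(\omega),
\]
so (\ref{psimx}) follows by injectivity of the Fourier transform.

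The principal hazard in executing this plan is bookkeeping rather than ideas: I need to match the normalization of $N_{\tus,m,k}$ against $N_m(2x-k)$ carefully, keep the Fourier convention consistent with the paper's $\hat h(\xi)=\int h(x)e^{-ix\xi}\,\ud x$, and verify that the $(i\omega/2)^{-m}$ coming from $\hat N_m(\omega/2)$ and the $(i\omega/2)^{-2m}$ coming from $\hat N_{2m}(\omega/2)$ combine correctly with the $(1-e^{-i\omega/2})^{2m}$ numerator so that the $(i\omega/2)^{m}$ prefactor emerges with the right sign and power.
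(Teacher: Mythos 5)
Your proposal is correct and follows essentially the same route as the paper: both arguments reduce the claim to verifying the vanishing moments of $\psi_m$ via the factorization $\hat\psi_m(\omega)=\tfrac12(1-e^{-i\omega/2})^m\hat N_m(\omega/2)=\tfrac12(i\omega/2)^m\hat N_{2m}(\omega/2)$, invoke the previously established uniqueness (full rank of the matrices $A_j$) to identify the translates $\psi_m(\cdot-j/2)$ with the interior VM wavelets, and obtain (\ref{psimx}) from (\ref{hatpsiomega}) by the Fourier differentiation/scaling rules. Your version is slightly more explicit than the paper's in matching the coefficients $q_{j,k}=(-1)^{k-j}\binom{m}{k-j}$ to the form (\ref{condition:wavelet:definition}) and in noting the support containment that lets the moment integrals over $[0,N]$ be taken over all of $\RR$, but these are refinements of the same argument rather than a different one.
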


\begin{proof}
The formula (\ref{definition:Nm}) for the so-called {\it cardinal B-spline} $N_{m,0}$ is well-known (and in fact is commonly used to define $N_m(x)$). Now, since the family $\{q_{j,k}\}$ of finite sequences in the definition of the VM wavelets in (\ref{condition:wavelet:definition}) and (\ref{condition:VM}) is unique (up to a non-zero constant multiples), and since
\[
\psi_{\tus,m,j}(x)=\psi_{\tus,m,0}(x-j/2)
\]
for the interior VM wavelets (that is, for $j=0,\ldots,2(N-m)$), it is sufficient to prove that $\psi_m(x)$, as defined in (\ref{definition:psim}), meets the vanishing moment requirement (\ref{condition:VM}); that is,
\begin{align}\label{condition:VM2}
\int_{-\infty}^\infty x^l\psi_m(x)\ud x=0,\quad l=0,\ldots,m-1.
\end{align}
Observe that from the definition (\ref{definition:psim}), it is clear that $\psi_m(x)$ has compact support with $\text{supp}\,\psi_m=[0,m]$, so that its Fourier transform $\hat{\psi}_m(\omega)$, along with the derivatives $D^{l}\hat{\psi}_m(\omega)$, $l=1,2,\ldots$, are well-defined. Hence, since (\ref{condition:VM2}) is equivalent to 
\begin{align}\label{condition:Dl0}
(D^l\hat{\psi}_m)(0)=0,\quad l=0,\ldots,m-1,
\end{align}
it is sufficient to verify (\ref{condition:Dl0}). To compute the Fourier transform of $\psi_m(x)$, in (\ref{definition:psim}), we observe that $\hat{N}_m(\omega)=(\hat{N}_1(\omega))^m$, so that
\begin{align*}
\hat{\psi}_m(\omega)&=\sum_{k=0}^m(-1)^k\left(\begin{array}{c}m\\k\end{array}\right)\frac{1}{2}e^{-ik\omega/2}\hat{N}_m(\omega/2)\\
&=\frac{1}{2}\left(1-e^{-i\omega/2}\right)^m(\hat{N}_1(\omega/2))^m\\
&=\frac{1}{2}\left(1-e^{-i\omega/2}\right)^m\left(\frac{1-e^{-i\omega/2}}{i\omega/2}\right)^m\\
&=\frac{1}{2}(i\omega/2)^m\left(\frac{1-e^{-i\omega/2}}{i\omega/2}\right)^{2m}\\
&=\frac{1}{2}(i\omega/2)^m\hat{N}_{2m}(\omega/2).
\end{align*}
Therefore, if $\omega$ is extended to a complex variable, then $\hat{\psi}_m(\omega)$ is an entire function with $m$-fold zero at $\omega=0$. This completes the proof of (\ref{condition:VM2}), and hence of (\ref{condition:Dl0}). The above computation also yields (\ref{hatpsiomega}). The proof of (\ref{psimx}) is simply integration of the Fourier transform of $N_{2m}^{(m)}(2x)$ by parts $m$ times, followed by application of (\ref{hatpsiomega}).
\end{proof}

\subsection{Extension to arbitrary knot sequences and order of vanishing moments}\label{subsection:VMextension}
Observe that (\ref{psimx}) can also be derived by applying the formula 
\[
N_\ell'(x)=N_{\ell-1}(x)-N_{\ell-1}(x-1)
\]
to $N_{2m}(x)$ $m$ times, starting with $\ell=2m$ (see, for example, \cite[p.86, Theorem 4.3 (vii)]{Chui:1992}). If the $n$-th order vanishing moment in (\ref{condition:VM}) is desired, for any integer $n\geq 1$, then the wavelet $\psi_{m;n}(x)$ of the corresponding interior wavelets $\psi_{\us,m;n,j}(x)$ in (\ref{condition:wavelet:definition}) can be formulated as
\[
\psi_{\tus,m;n,j}(x)=\psi_{m;n}(x-j/2),
\]
where 
\begin{align}\label{definition:psimnx}
\psi_{m;n}(x):=\sum_{k=0}^n(-1)^k\left(\hspace{-4pt}\begin{array}{c}n\\k\end{array}\hspace{-4pt}\right)N_m(2x-k).
\end{align}
The same proof of Theorem \ref{uniform:VMwaveletProperty} can be used to yield
\begin{align}\label{psimnx}
\psi_{m;n}(x)=N^{(n)}_{m+n}(2x).
\end{align}

The importance of the time-domain representation (\ref{psimx}), and more generally (\ref{psimnx}), is that the VM wavelets can be extended to splines on an arbitrary knot sequence
\begin{equation}\label{definition:ux}
\ux:\ldots\leq x_j\leq x_{j+1}\leq\ldots,
\end{equation}
with $x_{j+m}>x_j$ for all $j\in\ZZ$, which includes the knot sequence $\us$ in (\ref{nonuniform:definition:s_knot}), and allows for both bounded intervals and stacked knots, even among the interior knots of the interval. Furthermore, the order $n$, of vanishing moments can be arbitrarily chosen, independent of the order $m$. In particular, in some applications, it would be preferable to choose $n>m$. In addition, the MRA structure considered in the previous subsection is no longer required in our deviation. 

\begin{theorem}[VM spline-wavelets on arbitrary knot sequences]\label{Theorm:VMwavelet_arbitraryoKnots}
Let $m,n\geq 1$ be arbitrary integers, and $\ux$ an arbitrary knot sequence as defined in (\ref{definition:ux}). Then the spline basis functions
\begin{align}\label{definition:psi_uxmnx}
\psi_{\ux,m;n,k}(x):=N^{(n)}_{\ux,m+n,k}(x),\quad k\in\ZZ,
\end{align}
to be called {\it VM wavelets on $\ux$}, where $N_{\ux,m+n,k}(x)$ is defined in (\ref{definition:Nmktuniform}), which satisfy the moment conditions
\begin{equation}
\left\{\begin{array}{l}
\displaystyle\int_{-\infty}^\infty x^l\psi_{\ux,m;n,k}(x)\ud x=0,\quad l=0,\ldots,n-1\\ \\
\displaystyle\int_{-\infty}^\infty x^n\psi_{\ux,m;n,k}(x)\ud x\neq 0.
\end{array}
\right.
\end{equation}
\end{theorem}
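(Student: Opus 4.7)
My strategy is to apply integration by parts $n$ times in succession to the quantity $\int x^l N^{(n)}_{\ux,m+n,k}(x)\,\ud x$. The key preparatory observation is that the standing assumption $x_{j+m}>x_j$ in (\ref{definition:ux}) means no knot value in $\ux$ has multiplicity exceeding $m$. Since a B-spline of order $p$ with a knot of multiplicity $\mu$ is of class $C^{p-1-\mu}$ at that knot, this forces $N_{\ux,m+n,k}$ (order $p=m+n$, multiplicities $\mu\le m$) to belong to $C^{n-1}(\RR)$. Combined with its compact support $[x_k,x_{k+m+n}]$, this means that $N^{(j)}_{\ux,m+n,k}$ is a continuous, compactly supported function for $j=0,1,\ldots,n-1$, and therefore vanishes at both endpoints $x_k$ and $x_{k+m+n}$.

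First I would observe that $N^{(n-1)}_{\ux,m+n,k}$ is absolutely continuous with classical derivative equal to $\psi_{\ux,m;n,k}=N^{(n)}_{\ux,m+n,k}$ almost everywhere, so integration by parts is legitimate in the Lebesgue sense even though $\psi_{\ux,m;n,k}$ itself may have jumps at the knots of $\ux$. For each $l$ with $0\le l\le n-1$, I would then apply integration by parts $l+1$ times to peel all derivatives off of $N_{\ux,m+n,k}$. Every intermediate boundary contribution has the form $[x^{l-j}N^{(n-1-j)}_{\ux,m+n,k}(x)]_{x_k}^{x_{k+m+n}}$ with $0\le j\le l$; each vanishes because $0\le n-1-j\le n-1$ and such derivatives were shown above to vanish at the endpoints. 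The final surviving expression is a constant multiple of $[N^{(n-l-1)}_{\ux,m+n,k}]_{x_k}^{x_{k+m+n}}$, which is likewise zero since $n-l-1\le n-1$. This yields the first identity of the theorem.

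For $l=n$, I would carry out the same procedure $n$ times; again, every intermediate boundary term vanishes, but this time the final integrand after all derivatives are removed is just $N_{\ux,m+n,k}$ itself, so one arrives at
\[
\int_{-\infty}^\infty x^n\psi_{\ux,m;n,k}(x)\,\ud x=(-1)^n\,n!\int_{x_k}^{x_{k+m+n}}N_{\ux,m+n,k}(x)\,\ud x.
\]
Since $N_{\ux,m+n,k}$ is nonnegative, continuous, and not identically zero on its support (its integral being the positive number $(x_{k+m+n}-x_k)/(m+n)$ under the normalization in (\ref{definition:Nmktuniform})), the right-hand side is nonzero, which establishes the second claim.

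The main obstacle is justifying the $n$-step chain of integrations by parts and the vanishing of every boundary term produced along the way, given that $\psi_{\ux,m;n,k}$ itself can be discontinuous at the knots and that the knot sequence $\ux$ is permitted to have stacked knots. This is exactly what the hypothesis $x_{j+m}>x_j$ delivers: it caps the knot multiplicity at $m$ and thereby guarantees the $C^{n-1}$ smoothness of the order-$(m+n)$ B-spline, which is precisely the regularity needed for all derivatives of order $\le n-1$ to be continuous and to vanish at the endpoints of support. Once this smoothness is in hand, the argument is essentially bookkeeping.
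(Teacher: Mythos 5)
Your proof is correct and follows essentially the same route as the paper: repeated integration by parts, with all boundary terms vanishing because the derivatives $N^{(j)}_{\ux,m+n,k}$ for $j\le n-1$ are continuous, compactly supported, and hence zero at $x_k$ and $x_{k+m+n}$, and with the $l=n$ case reducing to a nonzero multiple of $\int N_{\ux,m+n,k}>0$. Your explicit justification of the endpoint vanishing via the multiplicity bound $x_{j+m}>x_j$ (giving $C^{n-1}$ regularity of the order-$(m+n)$ B-spline) is a welcome detail that the paper's proof merely asserts.
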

\begin{remark}
Since the knot sequence $\tus$ in (\ref{equation:tilde_s}) on the bounded interval $[0,2N]$ is a special case of $\ux$ in (\ref{definition:ux}), the formula (\ref{definition:psi_uxmnx}) can be applied to compute the boundary VM wavelets, such as $\psi_{\us,4,-2}(x)$ and $\psi_{\us,4,-1}(x)$, in the example in Subsection \ref{subsection:VM}.
\end{remark}
\begin{proof}
The proof of Theorem \ref{Theorm:VMwavelet_arbitraryoKnots} is a straight-forward application of integration by parts, by using the compact-support property of $N_{\ux,m+n,k}^{(l)}(x)$, $l=0,\ldots,n-1$, namely,
\begin{align*}
&\int_{-\infty}^\infty x^lN_{\ux,m+n,k}^{(n)}\ud x\\
=\,&(-1)^ll!\int_{-\infty}^\infty N_{\ux,m+n,k}^{(n-l)}\ud x\\
=\,&\,(-1)^ll!\big[\,N^{(n-l-1)}_{\ux,m+n,k}(x_{m+n+k})-N^{(n-l-1)}_{\ux,m+n,k}(x_{k})\,\big]=0
\end{align*}
since $N^{(n-l-1)}_{\ux,m+n,k}(x_{m+n+k})=N^{(n-l-1)}_{\ux,m+n,k}(x_{k})=0$ for all $l=0,\ldots,n-1$. On the other hand,
\[
\int_{-\infty}^\infty x^nN_{\ux,m+n,k}^{(n)}\ud x=(-1)^nn!\int_{x_k}^{x_{m+n+k}} N_{\ux,m+n,k}\ud x\neq 0,
\]
since $N_{\ux,m+n,k}>0$ for $x_k<x<x_{m+n+k}$ and the assumption that $x_{m+n+k}>x_k$.
\end{proof}

As an immediate consequence of Theorem \ref{Theorm:VMwavelet_arbitraryoKnots}, we have the following result. 

\begin{corollary}\label{Corollary:derivativeofpsimnx}
Let $m,n\geq 1$ be arbitrary integers. Then the derivative of the VM wavelet $\psi_{\ux,m;n,k}$ on an arbitrary knot sequence $\ux$, is given by
\begin{align}\label{formula:psi_uxmnxDerivative}
\psi'_{\ux,m;n,k}(x)=\psi_{\ux,m-1;n+1,k}(x).
\end{align}
In particular, for the interior VM wavelets on half-integer knots $\tus$,
\[
\psi'_{\tus,m;n,k}(x)=2\psi_{\tus,m-1;n+1,k}(x).
\]
\end{corollary}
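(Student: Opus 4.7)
The plan is to read the identity as a direct unwinding of the definition (\ref{definition:psi_uxmnx}), using the fact that the order of the underlying B-spline, namely $m+n$, is invariant under the replacement $(m,n)\mapsto (m-1,n+1)$.

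Concretely, the first step is to apply (\ref{definition:psi_uxmnx}) to the left-hand side and differentiate once:
\[
\psi'_{\ux,m;n,k}(x)=\frac{d}{dx}\,N^{(n)}_{\ux,m+n,k}(x)=N^{(n+1)}_{\ux,m+n,k}(x).
\]
The second step is to apply (\ref{definition:psi_uxmnx}) to the right-hand side with parameter pair $(m-1,n+1)$ in place of $(m,n)$, which yields
\[
\psi_{\ux,m-1;n+1,k}(x)=N^{(n+1)}_{\ux,(m-1)+(n+1),k}(x)=N^{(n+1)}_{\ux,m+n,k}(x).
\]
Comparing the two expressions gives the asserted identity on an arbitrary knot sequence.

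For the half-integer-knots refinement, I would switch to the cardinal-shift representation derived in Subsection \ref{subsection:VM}: for the interior wavelets, $\psi_{\tus,m;n,k}(x)=\psi_{m;n}(x-k/2)$ with $\psi_{m;n}(x)=N^{(n)}_{m+n}(2x)$ by (\ref{psimnx}). Differentiating in $x$ and applying the chain rule (the factor comes from $\tfrac{d}{dx}(2x-k)=2$) yields
\[
\psi'_{\tus,m;n,k}(x)=2\,N^{(n+1)}_{m+n}(2x-k)=2\,\psi_{m-1;n+1}(x-k/2)=2\,\psi_{\tus,m-1;n+1,k}(x).
\]
Thus the coefficient $2$ is simply the chain-rule contribution that is absent when the definition is applied directly on the native knot sequence $\ux$, and is present here because the half-integer knots $\tus=\tfrac12\us$ are treated via a rescaled cardinal B-spline evaluated at $2x-k$.

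There is no serious obstacle: the corollary is essentially a book-keeping identity whose content is that the generating B-spline depends on $m$ and $n$ only through the sum $m+n$, so that differentiation transfers one unit of smoothness from the polynomial order $m$ to the vanishing-moment order $n$. The only delicate point worth flagging is that when $m=1$ the right-hand side becomes $\psi_{\ux,0;n+1,k}=N^{(n+1)}_{\ux,n+1,k}$, which is the $(n{+}1)$-st derivative of a degree-$n$ spline and hence is well-defined only as a distribution (a weighted sum of point masses at the knots); for a classical pointwise identity the statement should be read with the implicit restriction $m\geq 2$.
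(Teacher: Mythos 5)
Your proof of the identity on an arbitrary knot sequence is exactly the paper's argument: apply the definition $\psi_{\ux,m;n,k}=N^{(n)}_{\ux,m+n,k}$, differentiate once, and observe that $(m-1)+(n+1)=m+n$. Your chain-rule derivation of the factor $2$ for the half-integer-knot case (which the paper asserts without proof) and your caveat about the $m=1$ case being distributional are both correct and welcome additions.
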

\begin{proof}
Indeed, it follows from (\ref{definition:psi_uxmnx}) in Theorem \ref{Theorm:VMwavelet_arbitraryoKnots} that 
\[
\psi'_{\ux,m;n,k}(x)=N^{(n+1)}_{\ux,m+n,k}(x)=N_{\ux,(m-1)+(n+1),k}'(x)=\psi_{\ux,m-1;n+1,k}(x).
\]
\end{proof}
The importance of the derivative formula (\ref{formula:psi_uxmnxDerivative}) is its application to the reassignment technique in the time-frequency analysis which we will discuss in Section \ref{section:tvPS}. In view of the need of taking the $n^{th}$ order derivatives of the B-splines $N_{\ux,m+n,k}$ to compute the VM wavelet $\psi_{\ux,m;n,k}(x)$, we end this subsection by recalling the following derivative formula (see, for example, \cite{deBoor:1978})
\[
N'_{\ux,l,k}(x)=\frac{l-1}{x_{k+l-1}-x_k}N_{\ux,l-1,k}(x)-\frac{l-1}{x_{k+l}-x_{k+1}}N_{\ux,l-1,k+1}(x),
\]
for $l=m+n,\ldots,m+1$.

\subsection{Computation of analytic VM wavelets}\label{section:analyticVMwavelet}
In this sub-section, we will focus on the interior wavelets
\[
\psi_{m,n}(t):=\psi_{\ZZ,m;n,0}(t)
\]
in (\ref{definition:psi_uxmnx}) of Theorem \ref{Theorm:VMwavelet_arbitraryoKnots}, with integer knot sequence $\ux=\ZZ$ and $k=0$. In signal processing, if $\psi_{m,n}$ is treated as a signal, then the extension of $\psi_{m,n}$ to
\begin{align}\label{AVMW}
\psi^*_{m,n}(t):=\psi_{m,n}(t)+i(\mathcal{H}\psi_{m,n})(t),
\end{align}
called the analytic signal representation allows us to write $\psi^*_{m,n}(t)$ as an AM-FM signal:
\[
\psi^*_{m,n}(t)=A(t)e^{i\phi(t)},
\]
with $A(t)=\sqrt{\psi_{m,n}^2(t)+(\mathcal{H}\psi_{m,n})^2(t)}$ and $\phi(t)=\tan^{-1}(\mathcal{H}\psi_{m,n}(t)/\psi_{m,n}(t))$. Here and throughout this paper, given a function $f$ in the suitable space, the operator $\mathcal{H}$ denotes the Hilbert transform, defined by
\[
(\mathcal{H}f)(t)=p.v.\left\{\frac{1}{\pi}\int^{\infty}_{-\infty}\frac{f(x)}{t-x}\ud x\right\}.
\] 
However, for our application of the VM wavelet as mother wavelets of the CWT, it is necessary to extend $\psi_{m,n}$ to $\psi_{m,n}^*$ in order to analyze only the (positive) frequency contents of a given signal $g(t)$. Indeed, since
\begin{align*}
\widehat{\psi^*_{m,n}}(\omega)&=\widehat{\psi_{m,n}}(\omega)+i(\widehat{\mathcal{H}\psi_{m,n}})(\omega)\\
&=\widehat{\psi_{m,n}}(\omega)+i(-i\text{sgn}(\omega)\widehat{\psi_{m,n}}(\omega))\\
&=2\widehat{\psi_{m,n}}(\omega)
\end{align*}
for $\omega>0$ and $\widehat{\psi^*_{m,n}}(\omega)=0$ for $\omega<0$, we have, 
\begin{align*}
W_g(a,b)&=\langle g,{\psi_{m,n}^{*(a,b)}}\rangle=\frac{1}{2\pi}\langle \hat{g},\widehat{{\psi^*_{m,n}}^{(a,b)}}\rangle \\
&=\frac{1}{2\pi}\int^{\infty}_{-\infty}\hat{g}(\omega)\overline{\widehat{\psi^*_{m,n}}}(\omega)\ud\omega\\
&=\frac{1}{2\pi}\int^{\infty}_{0}\hat{g}(\omega)\overline{\widehat{\psi^*_{m,n}}}(\omega)\ud\omega \\
&=\frac{1}{\pi}\int^{\infty}_{0}\hat{g}(\omega)\overline{\widehat{\psi_{m,n}}}(\omega)\ud\omega\,,
\end{align*}
where 
\begin{align}\label{definition:CWTofg}
W_{g}(a,b)=\langle g,\psi^{(a,b)}\rangle,
\end{align}
is the CWT of the function $g$ in a suitable linear space of functions, $a>0$ is the scale, $b\in\RR$ denotes the time position, 
\begin{align}\label{definition:psiababbrev}
\psi^{(a,b)}(t):=\frac{1}{a}\psi\left(\frac{t-b}{a}\right),
\end{align}
and $\psi$ is the mother wavelet chosen by the user. To compute $\mathcal{H}\psi_{m,n}$ in (\ref{AVMW}), we recall from Theorem \ref{Theorm:VMwavelet_arbitraryoKnots} that 
\begin{align}\label{UVM}
\psi_{m,n}(t)=N_{m+n}^{(n)}(t)
\end{align}
(for $\ux=\ZZ$ and $k=0$), where $N_r(t)$ denotes the $r^{th}$ order cardinal B-spline, defined by the $r^{th}$-fold convolution of the first order cardinal B-spline (see (\ref{definition:Nm}))
\[
N_1(t)=\chi_{[0,1)}(t),
\]
so that
\[
(\mathcal{H}N_1)(t)=\frac{1}{\pi}\int^1_0\frac{\ud x}{t-x}=\ln\left|\frac{t}{t-1}\right|.
\]
Also recall the following recurrence formula for computing the $m^{th}$ order cardinal B-spline $N_m$ in terms of the characteristic function $N_1:=\chi_{[0,1)}$:
\begin{align}\label{RBC}
N_r(t)=\frac{t}{r-1}N_{r-1}(t)+\frac{r-t}{r-1}N_{r-1}(t-1),
\end{align}
for $r=2,\ldots,m$. One of the important properties of the Hilbert transform for our computation of $\psi_{m,n}^*$ is that it preserves the recurrence relation (\ref{RBC}):
\begin{align}\label{RHBC}
(\mathcal{H}N_r)(t)=\frac{t}{r-1}(\mathcal{H}N_{r-1})(t)+\frac{r-t}{r-1}(\mathcal{H}N_{r-1})(t-1),
\end{align}
as shown in \cite[Theorem 3.2, p.179]{Chen_Huang_Riemenschneider_Xu:2006}. For completeness, we include the derivation of (\ref{RHBC}) as follows. Let $I(t)=t$ denote the identity function. Then for $r\geq 1$,
\begin{align*}
(\mathcal{H}(IN_r))(t)&=\frac{1}{\pi}\int^\pi_{-\pi}\frac{xN_r(x)}{t-x}\ud x \\
&= \frac{t}{\pi}\int^\infty_{-\infty}\frac{N_r(x)}{t-x} \ud x - \frac{1}{\pi}\int^{\infty}_{-\infty} \frac{t-x}{t-x}N_r(x)\ud r\\
&=t(\mathcal{H}N_r)(t)-\frac{1}{\pi}\int^{\infty}_{-\infty}N_r(x)\ud x\\
&=t(\mathcal{H}N_r)(t)-\frac{1}{\pi}.
\end{align*}
Hence, by applying this identity and the translation-invariance property of the Hilbert transform to the recurrence formula (\ref{RBC}), we obtain
\begin{align*}
(\mathcal{H}N_r)(t)&=\frac{1}{r-1}\big[ t(\mathcal{H}N_{r-1})(t)-\frac{1}{\pi}\big]+\frac{r}{r-1}(\mathcal{H}N_{r-1})(t-1)-\frac{1}{r-1}\big[ t(\mathcal{H}N_{r-1})(t-1)-\frac{1}{\pi}\big]\\
&=\frac{t}{r-1}(\mathcal{H}N_{r-1})(t)+\frac{r}{r-1}(\mathcal{H}N_{r-1})(t-1)-\frac{t}{r-1}(\mathcal{H}N_{r-1})(t-1),
\end{align*}
which is (\ref{RHBC}).

We are now ready to derive the following efficient scheme for the computation of $\mathcal{H}\psi_{m,n}$, and hence $\psi_{m,n}^*$. First, from (\ref{UVM}), we observe that 
\[
\psi_{m,n}(t)=N_{m+n}^{(n)}(t)=\sum_{k=0}^n(-1)^k\left(\begin{array}{c}n\\k\end{array}\right)N_m(t-k).
\]
Therefore, by the translation-invariance property of
\begin{align}\label{HVM}
(\mathcal{H}\psi_{m,n})(t)=\sum_{k=0}^m(-1)^k\left(\begin{array}{c}n\\k\end{array}\right)(\mathcal{H}N_m)(t-k),
\end{align}
where $(\mathcal{H}N_m)(t-k)$ can be computed recursively by applying (\ref{RHBC}), namely,
\begin{align*}
(\mathcal{H}N_r)(t-k)=\frac{t}{r-1}(\mathcal{H}N_{r-1})(t-k)+\frac{r-t}{r-1}(\mathcal{H}N_{r-1})(t-k-1),
\end{align*}
for $r=2,\ldots,m$, with initial function
\begin{align*}
(\mathcal{H}N_2)(t-k)&=t(\mathcal{H}N_1)(t-k)+(2-t)(\mathcal{H}N_1)(t-k-1)\nonumber\\
&=t\ln\left|\frac{t-k}{t-k-1}\right|+(2-t)\ln\left|\frac{t-k-1}{t-k-2}\right|.
\end{align*}
We remark that although the VM wavelets are compactly supported, their Hilbert transforms $\mathcal{H}\psi_{m,n}$ do not have finite support. The reason is that
\[
(\mathcal{H}N_1)(t)=\ln\left|\frac{t}{t-1}\right|=\ln\left|1+\frac{1}{t-1}\right|\sim\left| \frac{1}{t-1}\right|
\]
as $|t|\to \infty$, so that
\[
\mathcal{H}N_m(t)=O(|t|^{-1})
\]
for $|t|\to\infty$, and the decay is not fast. However, since the $n^{th}$ difference operator acts like the $n^{th}$ differential operator, it follows from $\psi_{m,n}=N^{(n)}_{m+n}$ that
\[
(\mathcal{H}\psi_{m,n})(t)=O(|t|^{n+1}) 
\]
for $|t|\to \infty$.

We end this section by mentioning that if an VM wavelet $\psi_{m,n}$ is used as the mother wavelet to compute the CWT of an analytic signal
\[
g^*(t)=g(t)+i(\mathcal{H}g)(t),
\]
we can avoid the numerical computation of $\mathcal{H}g$ (which could be expensive and quite inaccurate) by using the analytic VM wavelet $\psi_{m,n}^*$ to compute the CWT of the given signal $g(t)$ instead. Indeed, by the anti-symmetry property of the Hilbert transform, we have
\begin{align*}
\langle g^*,\psi^{(a,b)}_{m,n}\rangle&=\langle g+i\mathcal{H}g,\psi^{(a,b)}_{m,n}\rangle\\
&=\langle g ,\psi^{(a,b)}_{m,n}\rangle+i\langle \mathcal{H}g ,\psi^{(a,b)}_{m,n}\rangle\\
&=\langle g ,\psi^{(a,b)}_{m,n}\rangle-i\langle g ,\mathcal{H}\psi^{(a,b)}_{m,n}\rangle\\
&=\langle g ,\psi^{(a,b)}_{m,n}+i\mathcal{H}\psi^{(a,b)}_{m,n}\rangle=\langle g ,{\psi^{*(a,b)}_{m,n}}\rangle
\end{align*}

\subsection{VM wavelets are asymptotically derivatives of the Gaussian function}
In this subsection, we first prove that the VM wavelets $\psi_{m;n}$, as introduced in (\ref{definition:psimnx}), are asymptotically the same as the $n^{th}$ derivative of the Gaussian, after being properly scaled and centered, as $m+n$ tends to infinity. However, since $\psi_{m,n}$ is compactly supported, the spectra $|\hat{\psi}_{m;n}(\omega)|^2$ of $\psi_{m;n}$ must have side-lobes in nature. The second objective of this subsection is to illustrate the rapid decay of the side-lobes relative to the main-lobes for increasing spline order.  We will plot the spectra of $\psi_m(x):=\psi_{m,m}(x)$ (with $n=m$) for $m=3,\ldots,6$ and demonstrate the rapid decrease of the energy of the side-lobe to main-lobe ratios, in terms of SNR (measured in dB).

Let $g(x)$ be the (normalized) Gaussian function defined by 
\[
g(x):=\frac{1}{\sqrt{2\pi}}e^{-x^2/2},
\]
with Fourier transform given by
\[
\hat{g}(\omega)=e^{-\omega^2/2}.
\] 
In the following, we set $M=m+n$ and assume that $m>1$. From the definition of $\psi_{m;n}(x)$ in (\ref{psimnx}) and following the proof of Theorem \ref{uniform:VMwaveletProperty}, we have 
\begin{align}\label{formula:psimnhatomega}
\hat{\psi}_{m;n}(\omega)=\frac{1}{2}\left(\frac{i\omega}{2}\right)^n\hat{N}_M\left(\frac{\omega}{2}\right).
\end{align}
We have the following result.
\begin{theorem}
For $M=m+n$,
\[
\lim_{M\to\infty}\left[\left(\frac{M}{12}\right)^{n+1}\psi_{m;n}\left(\sqrt{\frac{M}{48}}(x+12)\right)-g^{(n)}(x)\right]=0,
\]
where the convergence is both pointwise for all $x\in\RR$ and in $L^p=L^p(\RR)$ for $1<p<\infty$.
\end{theorem}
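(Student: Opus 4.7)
The statement reduces at once to an asymptotic claim about the $n$-th derivative of the cardinal B-spline $N_M$, since (\ref{psimnx}) gives $\psi_{m;n}(v)=N_M^{(n)}(2v)$ with $M=m+n$. The reason a Gaussian appears is that $N_M$ is the density of the sum of $M$ i.i.d.\ $\mathrm{Uniform}[0,1]$ random variables, so after centering at the mean $M/2$ and rescaling by the standard deviation $\sigma_M:=\sqrt{M/12}$ a local central limit theorem converts $N_M$ into $g$; taking $n$ derivatives on the Fourier side then produces $g^{(n)}$. I would implement this by explicit Fourier analysis, using the closed form of $\hat{N}_M$.

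Set $\phi_M(y):=\sigma_M\,N_M(\sigma_M y+M/2)$. A chain-rule computation gives
\[
\phi_M^{(n)}(y)=\sigma_M^{n+1}\,\psi_{m;n}\!\left(\tfrac{\sigma_M}{2}\,y+\tfrac{M}{4}\right),
\]
so it is enough to show that $\phi_M^{(n)}\to g^{(n)}$ pointwise and in $L^p(\RR)$ for $1<p<\infty$; the affine change of variable and normalizing factor appearing in the theorem are then identified by direct substitution.

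For the Fourier-side analysis I would exploit $\hat{N}_M(\omega)=\big((1-e^{-i\omega})/(i\omega)\big)^M$, which after a change of variable yields the explicit formula
\[
\hat{\phi}_M(\xi)=\left(\frac{\sin(\xi/(2\sigma_M))}{\xi/(2\sigma_M)}\right)^{M}.
\]
For each fixed $\xi$, Taylor expanding $\sin(u)/u=1-u^2/6+O(u^4)$ at $u=\xi/(2\sigma_M)$ and using $\sigma_M^2=M/12$ gives
\[
\hat{\phi}_M(\xi)=\left(1-\frac{\xi^2}{2M}+O(1/M^2)\right)^{M}\longrightarrow e^{-\xi^2/2}=\hat{g}(\xi),
\]
and hence $(i\xi)^n\hat{\phi}_M(\xi)\to\widehat{g^{(n)}}(\xi)$ pointwise on $\RR$.

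The step I expect to be the main obstacle is upgrading this pointwise Fourier convergence to pointwise and $L^p$ convergence on the physical side. For this I would exploit the sharp envelope $|\hat{\phi}_M(\xi)|\le e^{-\xi^2/2}$ on $|\xi|\le 2\pi\sigma_M$, which follows from the elementary inequality $\sin(u)/u\le e^{-u^2/6}$ on $|u|\le\pi$ (verified by checking that the derivative of $-u^2/6-\log(\sin u/u)$ is nonnegative on $(0,\pi)$) combined with the identity $M/\sigma_M^2=12$; on the complementary region $|\xi|>2\pi\sigma_M$, the crude bound $|\sin(u)/u|\le 1/|u|$ gives $|\hat{\phi}_M(\xi)|\le(2\sigma_M/|\xi|)^M$, whose contribution to $\int|\xi|^n|\hat{\phi}_M(\xi)|\,d\xi$ is super-polynomially small in $M$. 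Dominated convergence in the Fourier inversion integral then delivers pointwise convergence of $\phi_M^{(n)}$ to $g^{(n)}$, and the same envelope, combined with integration by parts in $\xi$ to obtain uniform polynomial decay of $\phi_M^{(n)}(y)$ in $y$, yields $L^p$ convergence in the full range $1<p<\infty$. A final substitution into the identity linking $\phi_M^{(n)}$ and $\psi_{m;n}$ completes the argument.
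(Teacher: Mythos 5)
Your proposal is correct and follows essentially the same route as the paper: both pass to the Fourier side, write the rescaled $\hat N_M$ as $(\sin u/u)^M$, obtain pointwise convergence to $e^{-\xi^2/2}$, and dominate by a Gaussian envelope on $|\xi|\lesssim\sqrt M$ together with a super-polynomially small tail so that dominated convergence applies (the paper outsources the convergence and domination lemmas to Chui--Wang 1997, which you prove directly, and finishes $L^2$ via Plancherel where you propose decay in $y$ via integration by parts). One caveat: your bookkeeping yields the normalization $(M/12)^{(n+1)/2}$ and centering $+M/4$, which is exactly what the paper's own inverse-Fourier identity produces as well, so the constants $(M/12)^{n+1}$ and $\sqrt{M/48}\,(x+12)$ in the theorem statement appear to be typographical and will not be reproduced by the ``direct substitution'' you invoke.
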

\begin{proof}
Taking the Fourier transform of the $n^{th}$ derivative of the Gaussian $g(x)$ and applying the formula (\ref{formula:psimnhatomega}), we obtain
\[
2\left(\sqrt{\frac{M}{12}}\right)^n\hat{\psi}_{m;n}\left(\frac{4\sqrt{3}\omega}{\sqrt{M}}\right)e^{i\sqrt{3M}\omega}-(i\omega)^n\hat{g}(\omega)=
(i\omega)^n\left[ \hat{N}_M\left(\frac{2\sqrt{3}\omega}{\sqrt{M}}\right)e^{i\sqrt{3M}\omega}-\hat{g}(\omega)\right],
\]
and by a direct calculation, for a fixed $\omega$, we have
\[
\hat{N}_M\left(\frac{2\sqrt{3}\omega}{\sqrt{M}}\right)=e^{-i\sqrt{3M}\omega}\left(\frac{\sin(\sqrt{3/M}\omega)}{\sqrt{3/M}\omega}\right)^M.
\]
For the Cardinal B-spline $N_M$, we have the asymptotic property
\begin{align}\label{hatNM_asymptotical}
(i\omega)^n \hat{N}_M\left(\frac{2\sqrt{3}\omega}{\sqrt{M}}\right)e^{i\sqrt{3M}\omega}- (i\omega)^ne^{-\frac{\omega^2}{2}}\to 0
\end{align}
as $m\to \infty$ (see, for example \cite[Corollary 1]{Chui_Wang:1997}). Also observe that $1\leq p<\infty$, for all $M\in\NN$,
\[
(i\omega)^n \hat{N}_M\left(\frac{2\sqrt{3}\omega}{\sqrt{M}}\right)e^{i\sqrt{3M}\omega}-(i\omega)^n\hat{g}(\omega)\in L^p(\RR),
\] 
since $m> 1$. In addition, by the same argument as that of \cite[Lemma 4.4]{Chui_Wang:1997}, for $1<p<\infty$, there exists an $L^1$ function $q(\omega)$ so that
\begin{align}\label{formula:rescaled_psimnhat}
\left|2\left(\sqrt{\frac{M}{12}}\right)^n\hat{\psi}_{m;n}\left(\frac{4\sqrt{3}\omega}{\sqrt{M}}\right)e^{i\sqrt{3M}\omega}-(i\omega)^n\hat{g}(\omega)\right|^p\leq q(\omega).
\end{align}
Indeed, for $|\omega|\leq \frac{\pi}{3}\sqrt{\frac{M}{3}}$, we have $\left(\frac{\sin(\sqrt{3/M}\omega)}{\sqrt{3/M}\omega}\right)^M\leq e^{-\omega^2/4}$; if $|\omega|>\frac{\pi}{3}\sqrt{\frac{M}{3}}$, then $\left(\frac{\sin(\sqrt{3/M}\omega)}{\sqrt{3/M}\omega}\right)^M\leq \frac{1}{1+|\omega|^M}$. Hence, the errors in (\ref{formula:rescaled_psimnhat}) are bounded above by the family of functions $q_M(\omega)$, defined by
\[
q_M(\omega):=\omega^n\left(e^{-\omega^2/4}+\frac{1}{1+|\omega|^M}\right),
\] 
which have uniformly bounded $L^1(\RR)$ norms, since $m>1$.
Thus, by the Lebegue Dominated Convergence theorem, we may conclude that 
\[
\lim_{m\to \infty}\left\|2\left(\sqrt{\frac{M}{12}}\right)^n\hat{\psi}_{m;n}\left(\frac{4\sqrt{3}\omega}{\sqrt{M}}\right)e^{i\sqrt{3M}\omega}-(i\omega)^n\hat{g}(\omega)\right\|_{L^p}=0
\]
for all $1<p<\infty$, and hence,
\[
\lim_{m\to \infty}\left\|\left(\frac{M}{12}\right)^{n+1}\psi_{m;n}\left(\sqrt{\frac{M}{48}}(x+12)\right)-g^{(n)}(x)\right\|_{L^2}=0,
\]
where we have applied the Plancheral identity to 
\begin{align*}
&\mathcal{F}^{-1}\left\{2\left(\sqrt{\frac{M}{12}}\right)^n\hat{\psi}_{m;n}\left(\frac{4\sqrt{3}\omega}{\sqrt{M}}\right)e^{i\sqrt{3M}\omega}-(i\omega)^n\hat{g}(\omega)\right\}(x)\\
&\quad=\left(\frac{M}{12}\right)^{n+1}\psi_{m;n}\left(\sqrt{\frac{M}{48}}(x+12)\right)-g^{(n)}(x),
\end{align*}
where $\mathcal{F}^{-1}$ is the inverse Fourier transform. On the other hand, by a similar argument using the Lebegue Dominated Convergence theorem, we may conclude that \\$\left(\frac{M}{12}\right)^{n+1}\psi_{m;n}\left(\sqrt{\frac{M}{48}}(x+12)\right)-g^{(n)}(x)$ converges to $0$ for all $x$, for $m\to\infty$.   
\end{proof}

\vspace{0.4cm}
Next, since the spectrum of $g^{(n)}(x)$ is given by
\[
|(i\omega)^n\hat{g}(\omega)|^2=\omega^{2n}e^{-\omega^2},
\]
which may be considered as a band-pass filter with only one main lobe and no side lobes, it is important to investigate the analogous filter property of the VM wavelets $\psi_{m;n}(x)$. For convenience, we only consider $n=m$ and provide the plot of $|\hat{\psi}_m(\omega)|$ for $m=3,\ldots,6$ in Figure \ref{fig:SideLobe1}. The reason for plotting $|\hat{\psi}_{m}|$ instead of the spectra $|\hat{\psi}_m|^2$ is that the side-lobes of $|\hat{\psi}_m|^2$ are not visible, even for $m=3$. Observe that the side lobes of $|\hat{\psi}_m(\omega)|$ are visibly much smaller as $m$ increases from $3$ (for quadratic spline) to $6$. In fact, for $m\geq 4$, the side lobes are practically not visible at all. Hence, we also compute the ``side lobe ratio (SLR)'', defined below to compare the energy of the main-lobe (in $0\leq \omega\leq \pi$) versus the total energy of the side lobes in ($\pi\leq \omega<\infty$), measured in dBs:
\begin{align}
\text{SLR}:=20\log_{10}\frac{\|{\hat{\psi}_m}|_{(0,\pi]}\|_{L^2}}{\|{\hat{\psi}_m}|_{(\pi,\infty)}\|_{L^2}}.
\end{align}
In Figure \ref{fig:SideLobe2}, the plot of the SLR curve in dBs of the main lobe to side lobes (for $m=2,\ldots,12$) illustrates that the decrease of the side lobe to main lobes ratios to zero is exponentially fast, as $m$ increases.
\begin{figure}[!t]
\begin{center}
\subfigure[$\psi_3$ and $|\hat{\psi}_3|$]{\includegraphics[width= .23\textwidth]{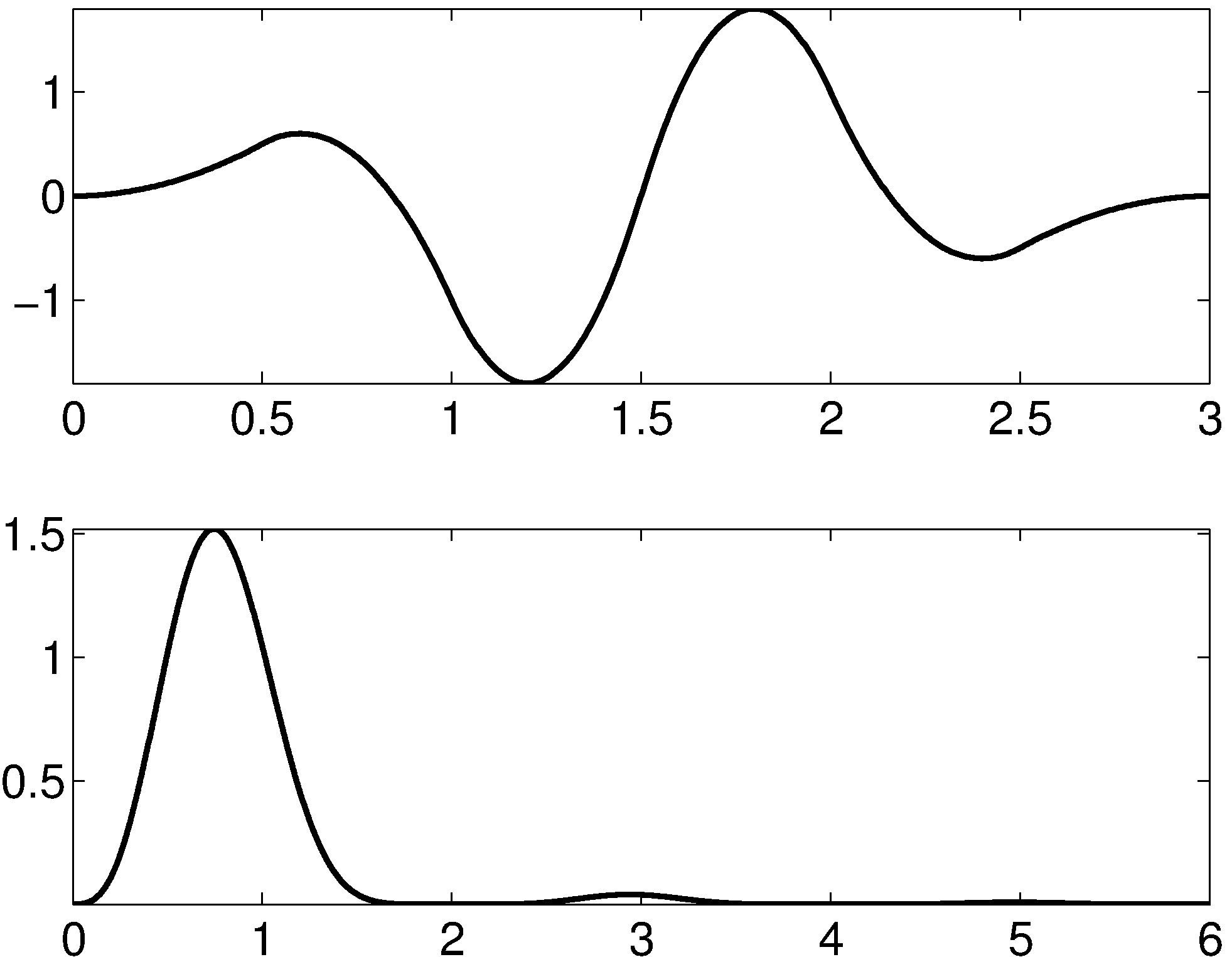}}\hspace{.2cm}
\subfigure[$\psi_4$ and $|\hat{\psi}_4|$]{\includegraphics[width= .23\textwidth]{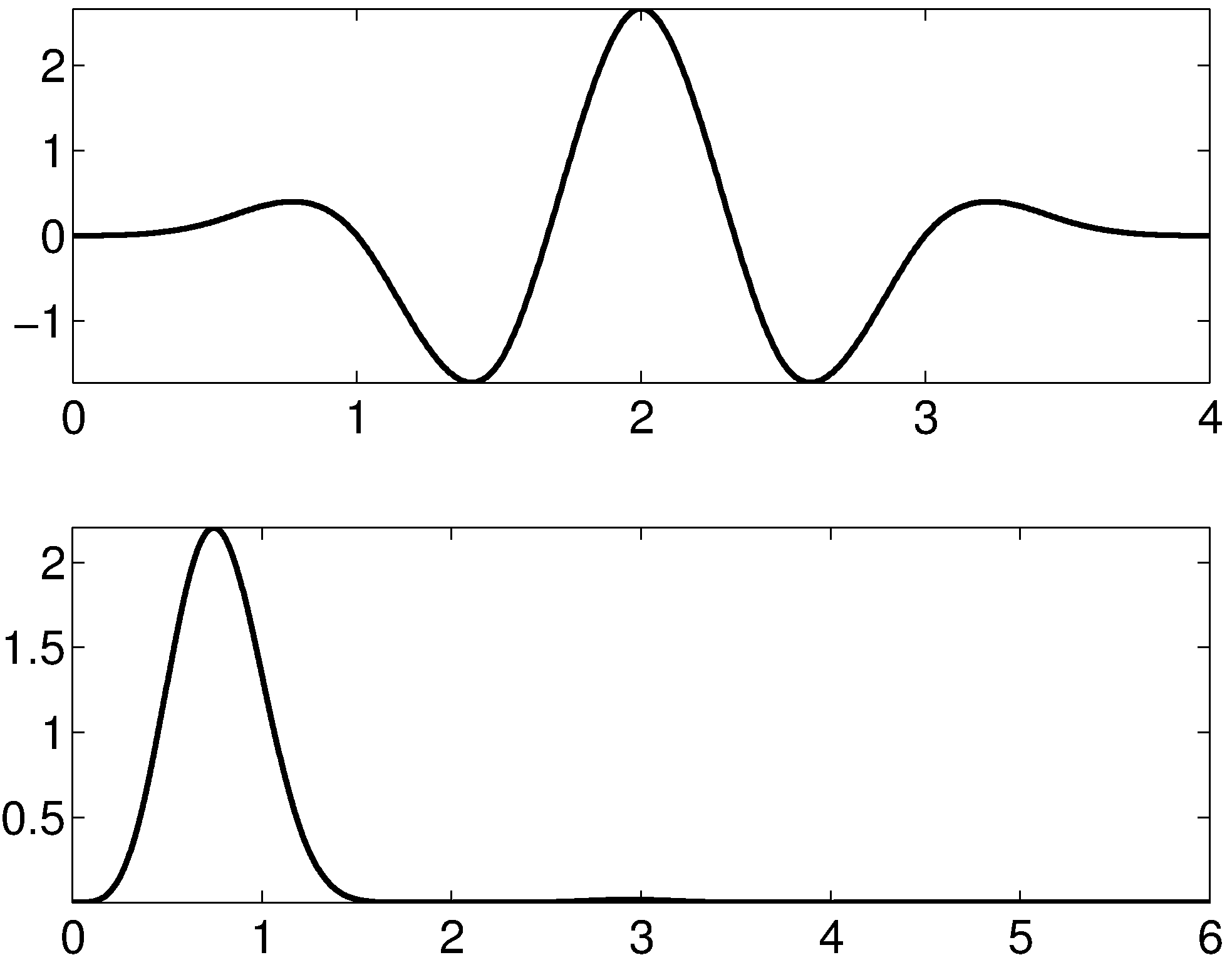}}\hspace{.2cm}
\subfigure[$\psi_5$ and $|\hat{\psi}_5|$]{\includegraphics[width= .23\textwidth]{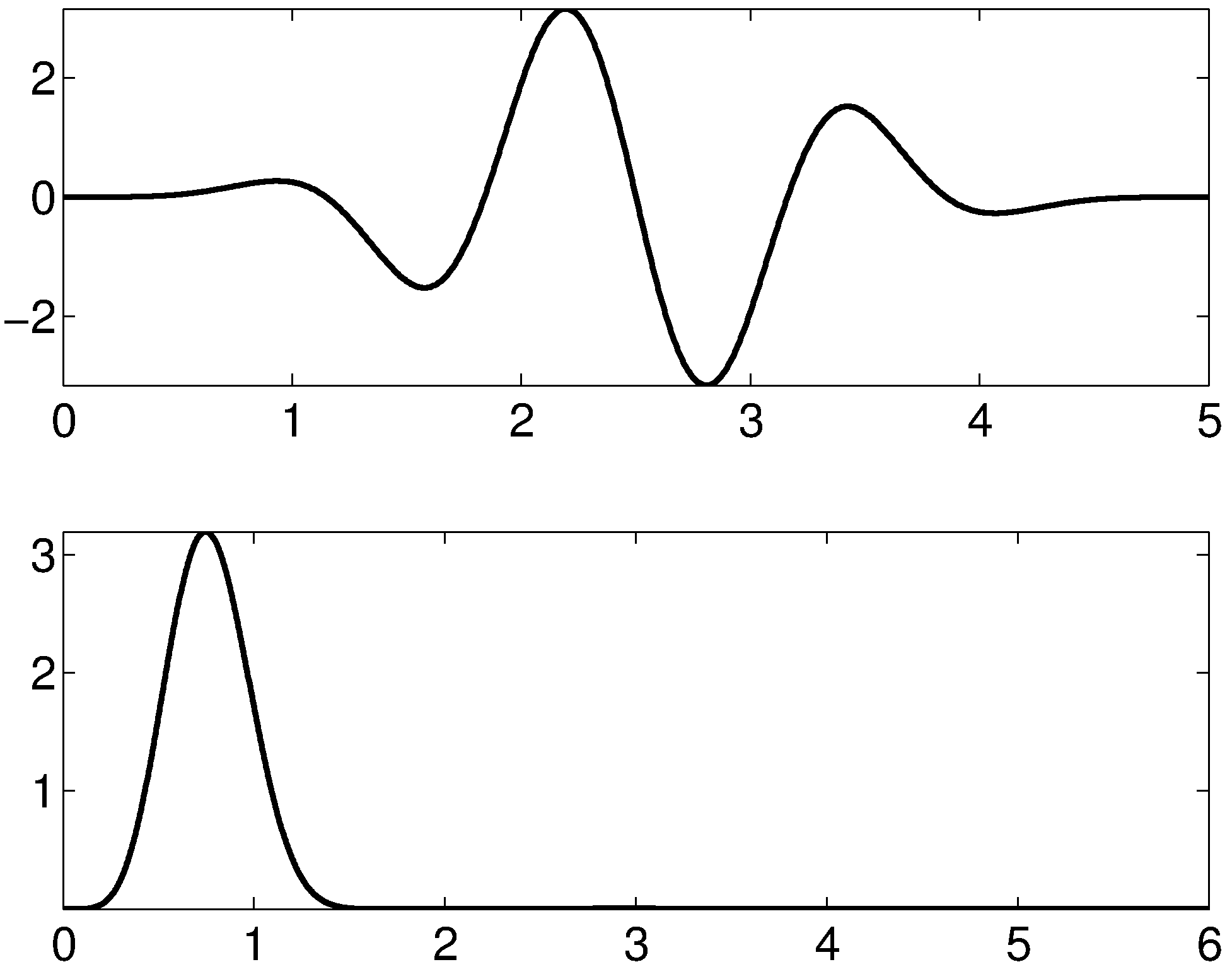}}\hspace{.2cm}
\subfigure[$\psi_6$ and $|\hat{\psi}_6|$]{\includegraphics[width= .23\textwidth]{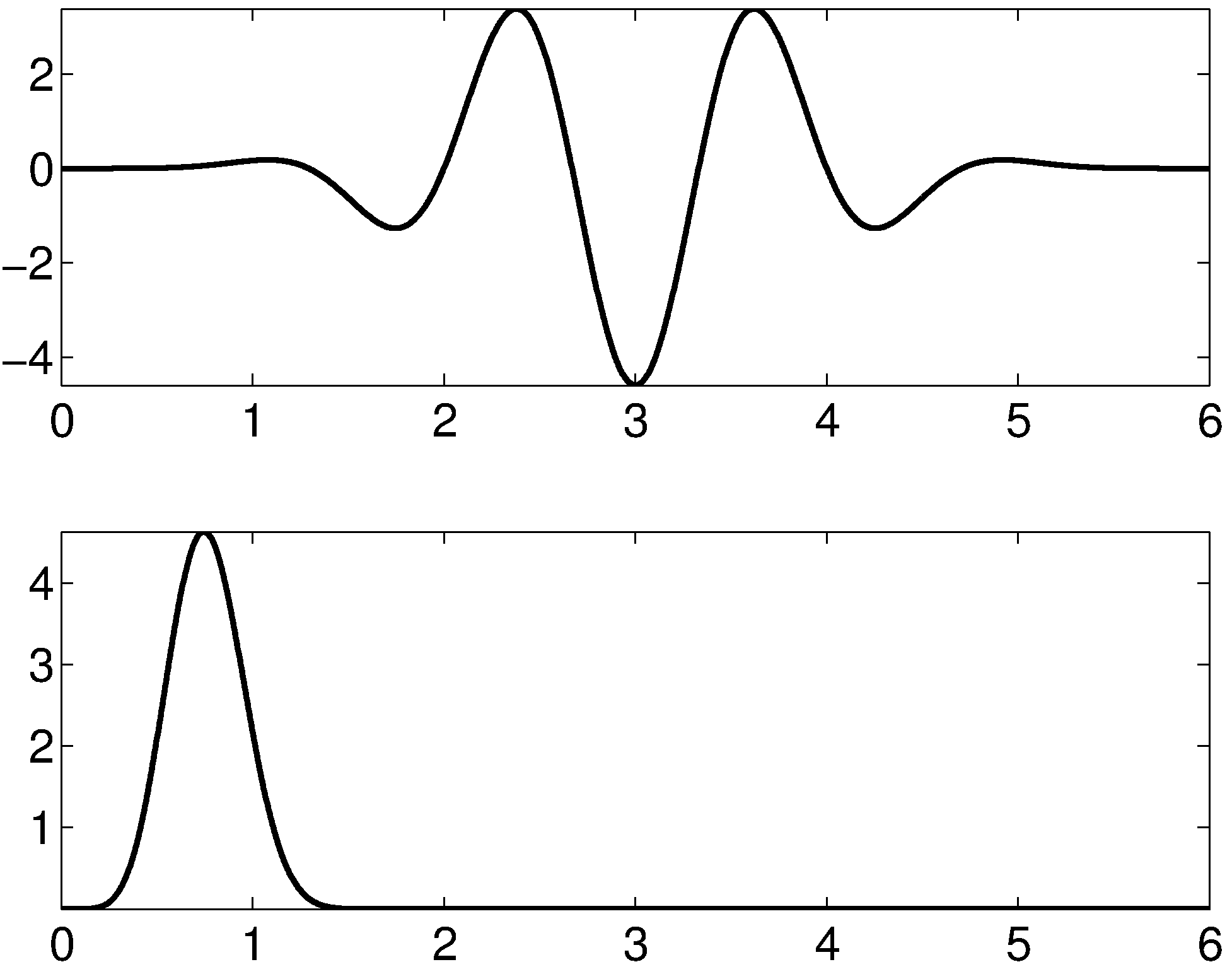}}\\
\subfigure[$\psi_{3,4}$ and $|\hat{\psi}_{3,4}|$]{\includegraphics[width= .23\textwidth]{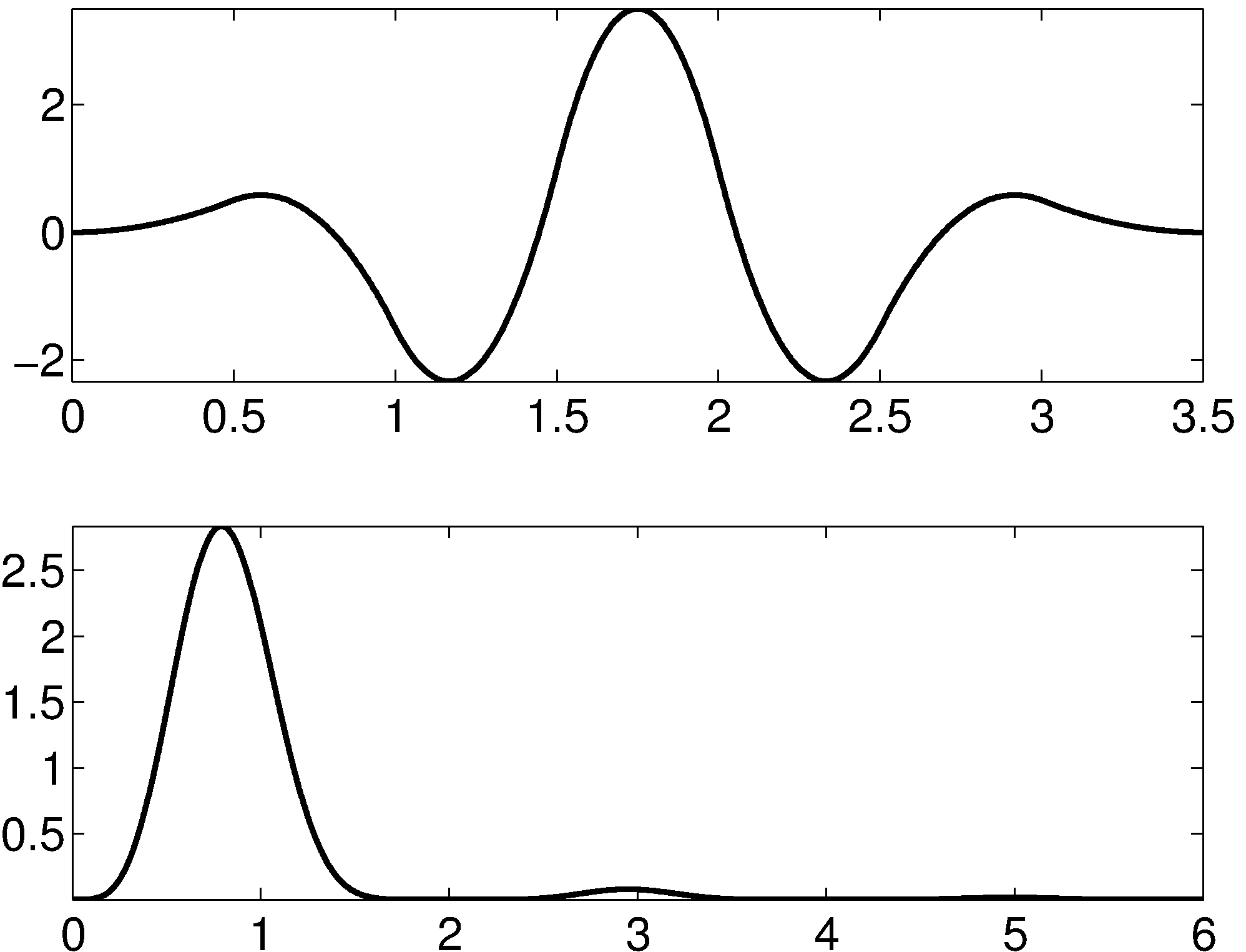}}\hspace{.2cm}
\subfigure[$\psi_{4,5}$ and $|\hat{\psi}_{4,5}|$]{\includegraphics[width= .23\textwidth]{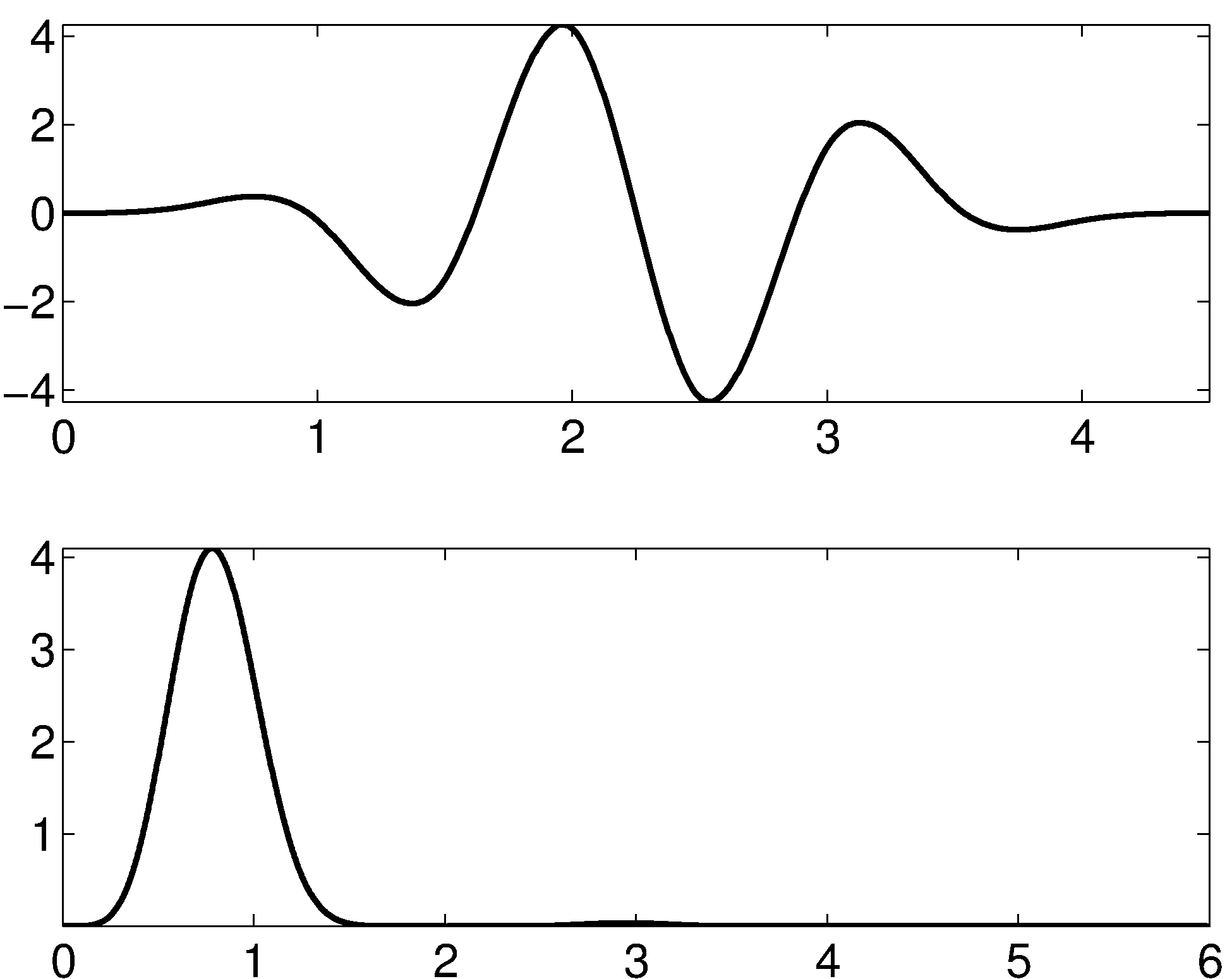}}\hspace{.2cm}
\subfigure[$\psi_{5,6}$ and $|\hat{\psi}_{5,6}|$]{\includegraphics[width= .23\textwidth]{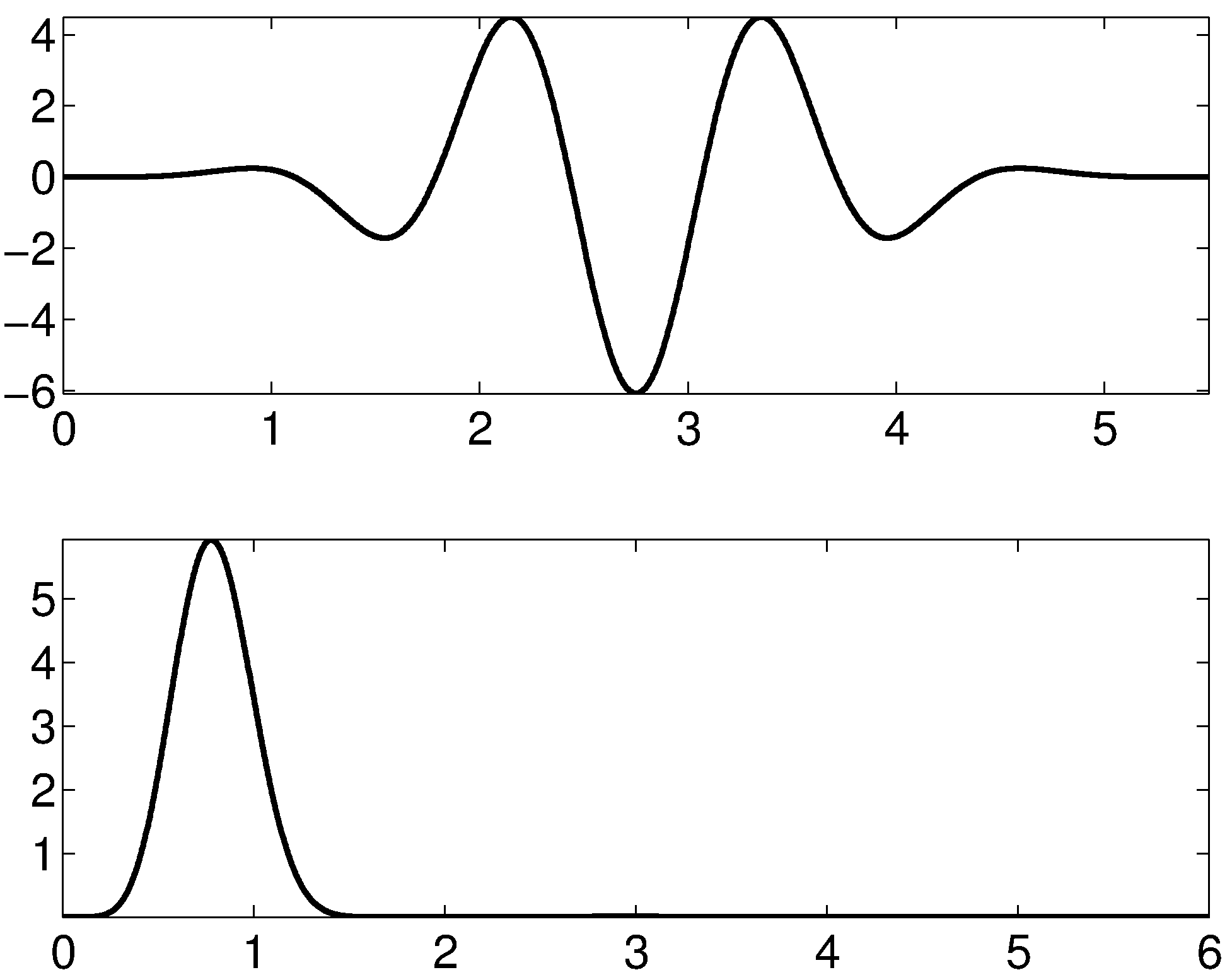}}\hspace{.2cm}
\subfigure[$\psi_{6,7}$ and $|\hat{\psi}_{6,7}|$]{\includegraphics[width= .23\textwidth]{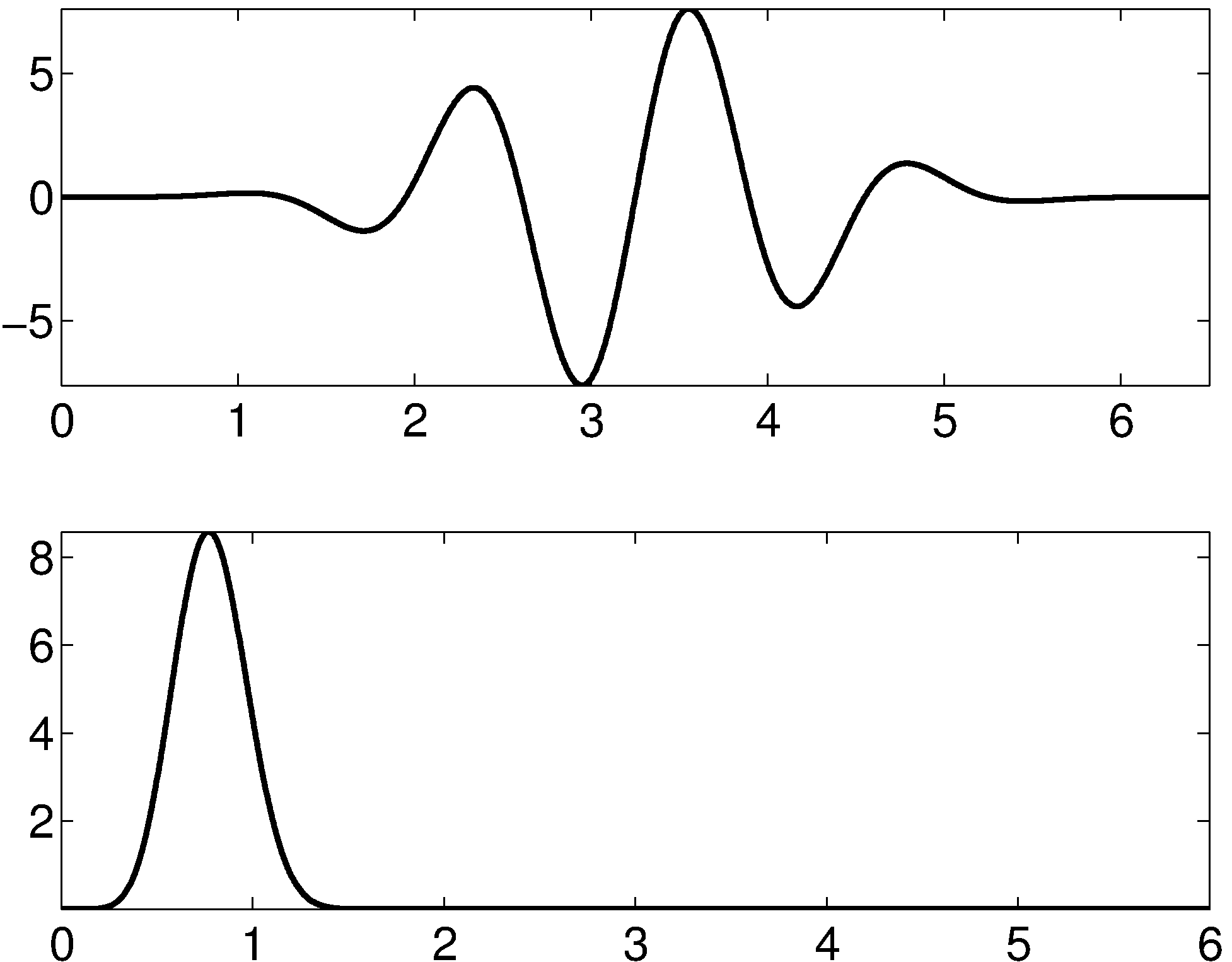}}\end{center}
\caption{From (a) to (d): $\psi_3$, $\psi_4$, $\psi_5$ and $\psi_6$ and the square-root of their power spectra. From (e) to (h): $\psi_{3,4}$, $\psi_{4,5}$, $\psi_{5,6}$ and $\psi_{6,7}$ and the square-root their power spectra. The first side lobe in the square-root of the power spectrum of the quadratic VM wavelets $\psi_3$ and $\psi_{3,4}$ are clearly visible.}
\label{fig:SideLobe1}
\end{figure}

\begin{figure}[!t]
\begin{center}
\includegraphics[width= .35\textwidth]{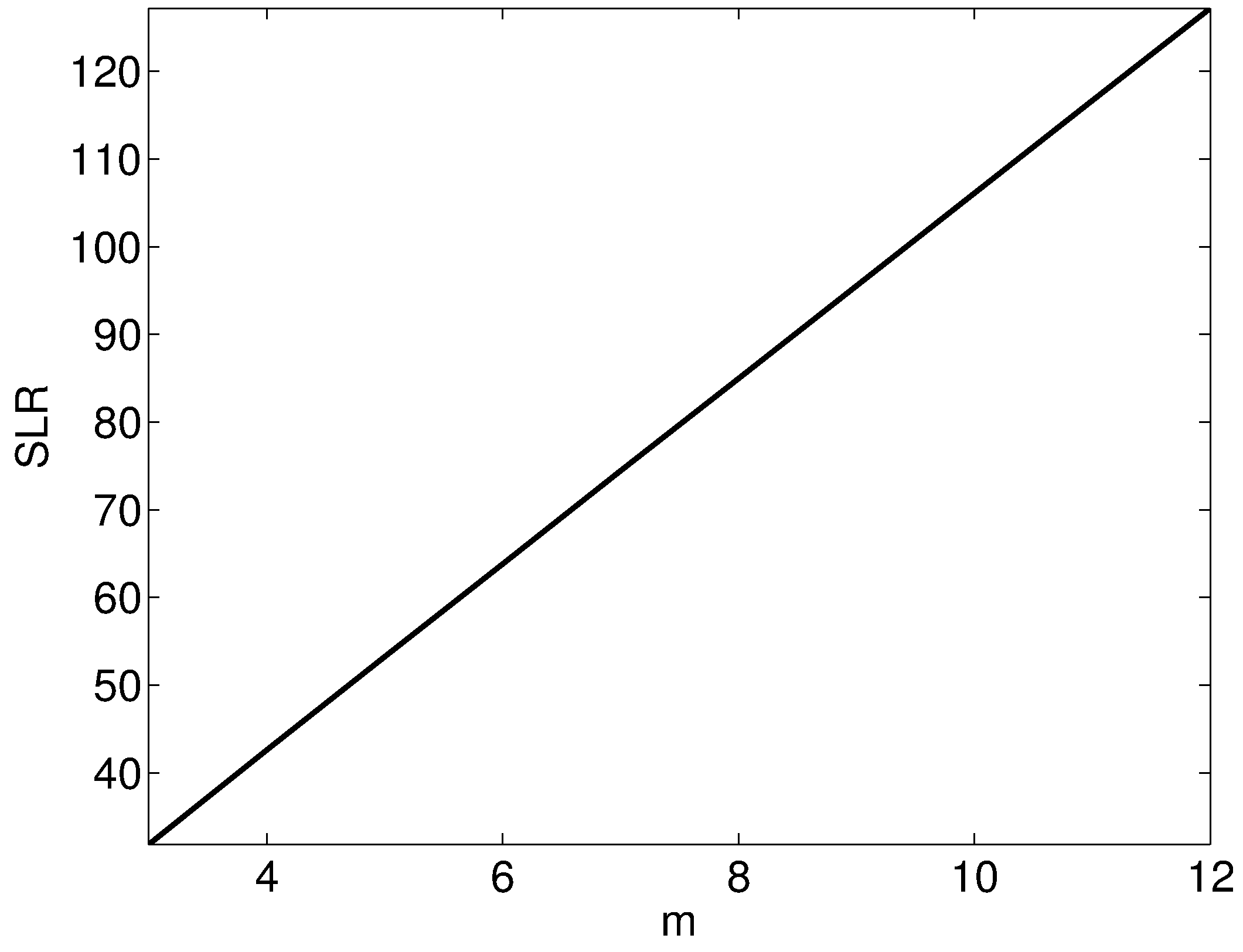}
\end{center}
\caption{The x-axis is used for the orders $m$ of the VM wavelets, and the y-axis is for the SLR, measured in dBs.}
\label{fig:SideLobe2}
\end{figure}

\section{Real-time time-frequency analysis and time-varying power spectrum}\label{section:tvPS}

In many applications, the primary purpose of analyzing a time series is to quantify how the dynamics evolves with respect to time. Of course, the interesting dynamical quantities vary from one field of application areas to another. In any case, for a signal with non-stationary oscillatory behavior, it is important to quantify the term ``frequency'', For instance, consider the respiratory signal as an example of signals with oscillation. It has been well-known that our breathing rate is not constant and the variability of the breathing rate, referred to as breathing rate variability (BRV) (see, for example, \cite{Wysocki2006,Wu_Hseu_Bien_Kou_Daubechies:2013} and the literature inside), contains information about our health condition. To quantify BRV, and hence the information hidden inside, analyzing the power spectrum (PS) of the respiratory signal, denoted by $R(t)$, is the standard yard stick. However, since the notion of frequency is global in nature, the PS is not applicable to momentary oscillatory patterns of $R(t)$. In this section, we focus on applying the proposed spline techniques, including the blending operator and VM wavelets, to acquire this dynamical information in the real-time fashion via time-frequency analysis. The result will be applied to the study of the anesthetic depth problem in the next section.

\subsection{Adaptive harmonic (or non-harmonic) model}
The mathematical model and analysis tools considered have been carefully chosen in the literature, for example, \cite{Daubechies_Lu_Wu:2011,Wu:2011Thesis,Chen_Cheng_Wu:2013}, in order to resolve the limitation of the PS. Let us consider the respiratory signal as an example. As is discussed in \cite{Wu_Hseu_Bien_Kou_Daubechies:2013}, our physiological systems closely interact with each other in a complex way. We treat these kinds of signals by a purely phenomenological model, which quantifies the observable features inside the signal. The main characteristic pattern of an oscillatory physiological signal we focus on is its fairly periodic phenomenon. In general, we consider the following {\it intrinsic-mode typed (IMT) function}:
\begin{equation}\label{decomp1}
g(t) = \sum_{\ell=1}^K A_\ell(t)\cos(2\pi\phi_\ell(t)),
\end{equation}
where $K$ is finite and for each $\ell$ the following conditions are satisfied.
\begin{align}\label{definition:adaptiveHarmonicSingle}
\left\{\begin{array}{l}\vspace{.2cm} 
A_\ell\in C^1(\RR)\cap L^\infty(\RR),\quad\phi_\ell\in C^2(\RR),\\ \vspace{.2cm} 
\inf_{t\in\RR} A_\ell(t)>c_1,\quad \inf_{t\in\RR}\phi'_\ell(t)>c_1,\\ \vspace{.2cm} 
\sup_{t\in\RR} A_\ell(t)\leq c_2,\quad\sup_{t\in\RR}\phi'_\ell(t)\leq c_2,\\ \vspace{.2cm} 
|A'_\ell(t)|\leq \epsilon \phi'_\ell(t),\quad |\phi''_\ell(t)|\leq \epsilon\phi_\ell'(t)\quad\mbox{ for all }t\in\RR,
\end{array}\right.
\end{align}
with $0<\epsilon\ll 1$, $0<c_1\leq c_2<\infty$. In addition, 
\begin{align}\label{definition:adaptiveHarmonicMultiple}
\phi_{\ell+1}'(t)-\phi'_\ell(t)>d(\phi_{\ell+1}'(t)+\phi_{\ell}'(t))
\end{align}
for all $\ell=1,\ldots,K-1$ and $0<d<1$. We call $\epsilon,c_1,c_2,d$ {\it model parameters}.
These seemingly complicated mathematical conditions actually have an intuitive interpretation beyond, leading to their nominations. 
\begin{definition}{[Phase function and Instantaneous frequency]} 
In the IMT function (\ref{decomp1}), for each $\ell$, the monotonically increasing function, $\phi_\ell(t)$, is called the {\it phase function}. The derivative of the phase function, $\phi_\ell'(t)$, is called the {\it instantaneous frequency} (IF) of the $\ell^{th}$ component of the IMT function of $g(t)$. 
\end{definition}
We require the IF to be positive, but usually not constant, allowing it to vary in time, as long as the variations are slight from one ``period'' to the next for all time $t$. 
\begin{definition}{[Amplitude modulation]}
In the IMT function (\ref{decomp1}), the positive function $A_\ell(t)$ is called the {\it amplitude modulation} (AM) of $g(t)$.
\end{definition}
Again, the AM $A_\ell(t)$ is positive, but is allowed to vary. We mention that the motivation and suitability of these definitions have been discussed extensively in \cite{Chen_Cheng_Wu:2013} and we summarize the result here. Note that we might have infinitely many different ways to represent the same cosine function $g(t)=\cos(t)$; indeed, there exist smooth functions $\alpha$ and $\beta$ so that $g(t)=\cos(t)=(1+\alpha(t))\cos(t+\beta(t))$. However, in general, there is no reason to favor $\alpha(t)=\beta(t)=0$, and hence the definition of AM and IF is problematic. In \cite{Chen_Cheng_Wu:2013}, it is shown that if $g(t)=A(t)\cos(\phi(t))=[A(t)+\alpha(t)]\cos(2\pi[\phi(t)+\beta(t)])$ satisfies the IMT condition, then $|\alpha(t)|\leq C\epsilon$ and $|\beta'(t)|\leq C\epsilon$, where $C$ is a constant depending only on the model parameters $c_1,c_2,d$. It is, in this sense, that the IF and AM are well-defined.

\begin{theorem}\label{thm:almost_orthogonal}
For an IMT function $g(t)=\sum_{l=1}^Kf_l(t)$, with $f_l(t)=A_l(t)\cos(2\pi\phi_l(t))$, and for all $t_0\in\RR$ and $L\gg 1$, the following ``almost orthogonal'' relationship holds
\begin{align*}
\left|\int_I \frac{f_k(t)}{\|f_k\|_{L^2(I)}}\frac{f_l(t)}{\|f_l\|_{L^2(I)}}\ud t\right|\leq\epsilon E+F\quad \mbox{for all}\quad k\neq l, 
\end{align*}
where $I:=[t_0,t_0+L]$ and $E$ and $F$ are constants to be precisely given in (\ref{proof:orthogonal:bound}) in the derivation. Moreover, $F$ tends to $0$ as $L\to\infty$. In other words, for $l\neq k$, $f_l$ and $f_k$ are roughly perpendicular to each other for large values of $L$. 
\end{theorem}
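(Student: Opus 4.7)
The plan is to establish this via the standard oscillatory-integral/integration-by-parts estimate, exploiting both the slow variation of amplitudes/frequencies and the frequency separation condition (\ref{definition:adaptiveHarmonicMultiple}).

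First, I would apply the product-to-sum identity to write
\[
f_k(t)f_l(t)=\tfrac{1}{2}A_k(t)A_l(t)\bigl[\cos(2\pi(\phi_k(t)-\phi_l(t)))+\cos(2\pi(\phi_k(t)+\phi_l(t)))\bigr],
\]
and treat each summand separately. For each choice of sign, let $\psi_\pm(t):=\phi_k(t)\pm\phi_l(t)$. The separation hypothesis (\ref{definition:adaptiveHarmonicMultiple}), together with positivity of all $\phi_j'$, gives $|\psi_\pm'(t)|\geq d(\phi_k'(t)+\phi_l'(t))\geq 2dc_1$ uniformly in $t$, so division by $\psi_\pm'$ is safe throughout $I$.

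Next, on each interval I would integrate by parts using $\cos(2\pi\psi_\pm)=\frac{1}{2\pi\psi_\pm'}\frac{d}{dt}\sin(2\pi\psi_\pm)$, which yields
\[
\int_I A_kA_l\cos(2\pi\psi_\pm)\,dt = \left[\frac{A_kA_l\sin(2\pi\psi_\pm)}{2\pi\psi_\pm'}\right]_{t_0}^{t_0+L} - \int_I \frac{d}{dt}\!\left(\frac{A_kA_l}{2\pi\psi_\pm'}\right)\sin(2\pi\psi_\pm)\,dt.
\]
The boundary term is bounded uniformly in $L$ by $c_2^2/(2\pi dc_1)$ in absolute value, using the uniform bounds from (\ref{definition:adaptiveHarmonicSingle}). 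For the interior term, expanding the derivative and invoking $|A_j'|\leq\epsilon\phi_j'\leq\epsilon c_2$ and $|\phi_j''|\leq\epsilon\phi_j'\leq\epsilon c_2$ shows that the integrand is pointwise $O(\epsilon)$ with an explicit constant depending only on $c_1,c_2,d$, so the interior term contributes $O(\epsilon L)$.

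Then I would lower-bound the denominators by the same technique: writing $\|f_k\|_{L^2(I)}^2=\frac{1}{2}\int_I A_k^2\,dt + \frac{1}{2}\int_I A_k^2\cos(4\pi\phi_k)\,dt$, the first piece is at least $\frac{c_1^2}{2}L$ while the second piece is bounded by a constant independent of $L$ (integration by parts again, now with the single phase $2\phi_k$, whose derivative is bounded below by $2c_1$). Hence $\|f_k\|_{L^2(I)}\|f_l\|_{L^2(I)}\geq \frac{c_1^2}{2}L - C_0$ for $L$ large, with $C_0=C_0(c_1,c_2)$. Combining all the estimates produces
\[
\left|\int_I \frac{f_k(t)f_l(t)}{\|f_k\|_{L^2(I)}\|f_l\|_{L^2(I)}}\,dt\right|\leq \frac{C_1\epsilon L + C_2}{\tfrac{c_1^2}{2}L - C_0},
\]
which splits into an $\epsilon$-controlled constant $E=2C_1/c_1^2+o(1)$ and a residual $F=O(1/L)$ that vanishes as $L\to\infty$, matching the advertised form (\ref{proof:orthogonal:bound}).

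The main technical nuisance, rather than any deep obstacle, will be bookkeeping the constants so as to cleanly separate the $\epsilon$-dependent part from the $L$-decaying part; in particular, one must be careful that the boundary terms at $t_0,t_0+L$ contribute only to $F$ and not to $E$, which is why they must be divided by the $L$-growing normalization and not absorbed into the $\epsilon$-estimate.
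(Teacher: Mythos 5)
Your proposal is correct and follows essentially the same route as the paper's proof: product-to-sum decomposition, integration by parts against the non-stationary phase $\psi_\pm=\phi_k\pm\phi_l$ using the separation condition (\ref{definition:adaptiveHarmonicMultiple}) to bound $1/\psi_\pm'$ from above, and normalization by the $L^2$ norms (which grow like $\sqrt{L}$) so that the $O(1)$ boundary contributions land in the $L$-decaying term $F$ while the $O(\epsilon L)$ interior contributions land in $\epsilon E$; the only organizational difference is that the paper subdivides $I$ at zeros of the oscillating factor and tracks the separation through $\beta=\bigl(\frac{1+d}{1-d}\bigr)^{k-l}$, whereas you integrate by parts once over all of $I$ with the uniform bound $\psi_\pm'\geq 2dc_1$. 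One small correction: the oscillatory piece $\frac{1}{2}\int_I A_k^2\cos(4\pi\phi_k)\,\ud t$ in your lower bound for $\|f_k\|_{L^2(I)}^2$ is not $O(1)$ but $O(1)+O(\epsilon L)$, since the integrand produced by integration by parts is pointwise $O(\epsilon)$ rather than zero; this is harmless because $\epsilon\ll 1$ still yields $\|f_k\|_{L^2(I)}^2\geq \frac{c_1^2}{4}L$ for $L$ large, but the intermediate claim should be stated that way.
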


\begin{proof}
Suppose that $k>l$. To simplify the notation, we set $\|\cdot\|_I:=\|\cdot\|_{L^2(I)}$. By a direct computation, we observe that
\begin{align*}
 \int_I f_k(t)f_l(t)\ud t= \frac{1}{2}\int_I A_k(t)A_l(t)\big[\cos(2\pi(\phi_k(t)-\phi_l(t)))+\cos(2\pi(\phi_k(t)+\phi_l(t)))\big] \ud t.
\end{align*}
Note that by definition, $\phi'_k(t)-\phi'_l(t)>\left(\left(\frac{1+d}{1-d}\right)^{k-l}-1\right)\phi'_l(t)>0$ for all $t$, allowing us to choose $t_0\leq t_1< \ldots<t_N\leq t_0+L$, so that $\cos(2\pi(\phi_k(t_l)-\phi_l(t_l)))=0$ for all $l=1,\ldots,N$. Thus, we can divide the integration into pieces which are labeled by (\ref{proof:shape:almostOrthogonal:1}) and (\ref{proof:shape:almostOrthogonal:2}) as follows: 
\begin{align}
&\int_I A_k(t)A_l(t)\big[\cos(2\pi(\phi_k(t)-\phi_l(t)))\big]\ud t\nonumber\\
=&\int_{t_0}^{t_1} A_k(t)A_l(t)\cos(2\pi(\phi_k(t)-\phi_l(t)))\ud t\nonumber\\
&\quad+\int_{t_N}^{t_0+L} A_k(t)A_l(t)\cos(2\pi(\phi_k(t)-\phi_l(t)))\ud t\label{proof:shape:almostOrthogonal:1}\\
&\quad+\sum_{l=1}^{N-1} \int_{t_l}^{t_{l+1}} A_k(t)A_l(t)\cos(2\pi(\phi_k(t)-\phi_l(t)))\ud t.\label{proof:shape:almostOrthogonal:2}
\end{align}
The estimate for (\ref{proof:shape:almostOrthogonal:2}) can be achieved simply by applying integration by parts, namely 
\begin{align*}
&\left|\int_{t_l}^{t_{l+1}} A_k(t)A_l(t)\cos(2\pi(\phi_k(t)-\phi_l(t)))\ud t\right|\\
=&\,\left|\int_{t_l}^{t_{l+1}} \partial_t\left(\frac{A_k(t)A_l(t)}{2\pi(\phi'_k(t)-\phi'_l(t))}\right)\sin(2\pi(\phi_k(t)-\phi_l(t)))\ud t\right|\\
\leq\,& \epsilon \left|\int_{t_l}^{t_{l+1}}  \frac{(\phi_k'(t)A_l(t)+\phi_l'(t)A_k(t))(\phi_k'(t)-\phi_l'(t))+A_k(t)A_l(t)(\phi_k'(t)+\phi_l'(t))}{2\pi(\phi'_k(t)-\phi'_l(t))^2}  \ud t\right|\\
\leq\,& \frac{\epsilon}{2\pi (\beta-1)c_1}\int_{t_l}^{t_{l+1}}\big( \phi_k'(t)A_l(t) + \phi_l'(t)A_k(t) +\alpha A_k(t)A_l(t) \big)\ud t,
\end{align*}
where $\beta:= \left(\frac{1+d}{1-d}\right)^{k-l}$ and $\alpha:=\frac{\beta+1}{\beta-1}$. The last inequality holds since $\phi_k'(t)+\phi_l'(t)\leq \alpha (\phi_k'(t)-\phi_l'(t))$.
As a result,
we have the following bound for (\ref{proof:shape:almostOrthogonal:2}):
\begin{align*}
&\sum_{l=1}^{N-1} \int_{t_l}^{t_{l+1}} A_k(t)A_l(t)\cos(2\pi(\phi_k(t)-\phi_l(t)))\ud t\\
\leq&\, \frac{\epsilon}{2\pi (\beta-1)c_1}\int_{[t_1,t_N]}\big( \phi_k'(t)A_l(t)+\phi_l'(t)A_k(t) +\alpha A_k(t)A_l(t) \big)\ud t. 
\end{align*}
Due to the lack of cancellation property, we compute an upper bound of the boundary terms (\ref{proof:shape:almostOrthogonal:1}) simply by $(L+t_1-t_N)c_2^2\leq \frac{2c_2^2}{(\beta-1)c_1}$, where the last inequality holds due to $\int^{t_1}_{t_0}\phi'_k(s)-\phi'_l(s)\ud s = 1\geq (\beta-1)c_1(t_1-t_0)$. 
Hence, by the Holder's inequality, we obtain
\begin{align}
&\left|\int_I A_k(t)A_l(t)\big[\cos(2\pi(\phi_k(t)-\phi_l(t)))\big]\ud t\right|\nonumber\\
\leq\,&\frac{\epsilon}{2\pi (\beta-1)c_1}\left(\|\phi_k'\|_{I}\|A_l\|_{I}+\|\phi_l'\|_{I}\|A_k\|_{I} +\alpha\|A_k\|_{I}\|A_l\|_{I}\right)+\frac{2c_2^2}{(\beta-1)c_1}.\nonumber 
\end{align}   
Similarly, $\phi'_k(t)+\phi'_l(t)> 0$ for all $t$, so we can choose $t_0\leq s_1< \ldots<s_M\leq t_0+L$ so that $\cos(2\pi(\phi_k(s_l)+\phi_l(s_l)))=0$ for all $l=1,\ldots,M$. Again, by a similar argument, we have
\begin{align*}
&\int_I A_k(t)A_l(t)\big[\cos(2\pi(\phi_k(t)+\phi_l(t)))\big]\ud t\\
\leq\,&\frac{\epsilon}{2\pi (\beta+1)c_1}\left(\|\phi_k'\|_{I}\|A_l\|_{I}+\|\phi_l'\|_{I}\|A_k\|_{I} +\|A_k\|_{I}\|A_l\|_{I}\right)+\frac{2c_2^2}{(\beta+1)c_1}. 
\end{align*}
In conclusion, we have $\left|\int_I \frac{f_k(t)}{\|f_k\|_{I}}\frac{f_l(t)}{\|f_l\|_{I}}\ud t\right|\leq E\epsilon+F$, where
\begin{align}\label{proof:orthogonal:bound}
E&:= \left(\frac{ 1}{\beta+1 }+\frac{ 1}{ \beta-1 }\right)\frac{\|\phi_k'\|_{I}\|A_l\|_{I}+\|\phi_l'\|_{I}\|A_k\|_{I} +\alpha\|A_k\|_{I}\|A_l\|_{I}}{4\pi c_1\|f_k\|_{I}\|f_l\|_{I}}\\
F&:= \left(\frac{ 1}{\beta+1 }+\frac{ 1}{ \beta-1 }\right) \frac{2c_2^2}{ c_1\|f_k\|_{I}\|f_l\|_{I}}. \nonumber
\end{align}

\end{proof}
Note that since $\|A_l\|^2_I=\int_I f^2_l(t) \ud t+\int_I  |A_l(t)|\sin^2(2\pi\phi_l(t))\ud t$, we know that $\|f_k\|_{I}\sim \frac{1}{2}\|A_k\|_I$; that is, $\|f_k\|_{I}$ and $\|A_k\|_I$ are of the same order no matter how large $L$ is. Also, by the assumption of $\phi'_k$ and $A_k$, the ratio between $\|A_k\|_I$ and $\|\phi_k'\|_I$ is bounded by $c_2/c_1$ from the above and $c_1/c_2$ from the below. Thus, $F$ in (\ref{proof:orthogonal:bound}), which comes from the boundary, decreases to $0$ when $L$ increases, while $E$ is kept bounded. Also, the $\beta$ showing in (\ref{proof:orthogonal:bound}) indicates that the farther the instantaneous frequencies of two components are separated, the more these two components are perpendicular to each other.

Next, we shall model the commonly encountered ingredient in signal processing -- the trend, of which we consider the following model.
\begin{definition}{[Trend]}
The trend is a continuous function $T(t)\in C^0\cap \mathcal{S}'$ that satisfies 
$$
\left|\int_0^B \langle T(t),\psi^{(a,b)}(t)\rangle\ud a\right|\ll 1,\quad \left|\int_0^B\langle T(t),\partial_t\psi^{(a,b)}(t)\rangle\ud a\right|\ll 1
$$ 
for all $b\in\RR$ and $B\gg 1$, where $\psi\in\mathcal{S}$ is a mother wavelet, $\mathcal{S}$ is the Schwartz space and $\mathcal{S}'$ is the space of tempered distributions. We call $T(t)$ the {\it trend associated with $\psi$ with scale $B$}. 
\end{definition}
The trend characterizes the possibly existing non-oscillatory or slowly oscillatory component inside the signal. Indeed, the condition $|\int_0^B\langle T(t),\psi^{(a,b)}(t)\rangle\ud a|\ll 1$, for $B\gg 1$, suggests that at least locally, the trend behaves like a polynomial. Note that polynomials of any finite order and harmonic functions with very slow frequency are special examples of the trend. Thus, we can view the trend as a ``time varying DC term'' whose local polynomial behavior can be eliminated or minimized by the CWT. 

We mention that polynomials of arbitrary degrees are examples of the trend we consider in this paper. To consider another example, let $\psi(t)$ be any wavelet with compactly supported Fourier transform $\hat{\psi}(\omega)$ that vanishes or $0\leq\omega<1-\Delta$, where $0<\Delta<1$. Then, for any $\xi_0<(1-\Delta)/B$, the harmonic function $\cos(2\pi\xi_0 t)$ can be considered as a trend associated with $\psi$ with scale $B$.

Since all real-world data are contaminated with noise, we consider the following noise model \cite{Chen_Cheng_Wu:2013}. 
\begin{definition}{[Noise]}
Consider a stationary generalized random process $\Phi$ and a Schwartz function $\psi$ such that $\text{var}\Phi(\psi)=1$ with power spectrum $\ud \eta$ of $\Phi$ satisfies $\int (1+|\xi|)^{-2l}\ud \eta<\infty$ for some $l>0$ \cite{Chen_Cheng_Wu:2013}. Then, the noise model we consider will be $\sigma(t)\Phi$, where $\sigma(t)$ is a slowly varying smooth function so that $\|\sigma\|_{L^\infty(\RR)}\ll 1$ and $\max_{\ell=1,\ldots,l}\|\sigma^{(\ell)}\|_{L^{\infty}}\ll 1$. We call $\sigma(t)\Phi$ {\it the noise associated with $\psi$ with non-stationarity $\sigma$}.
\end{definition}
Note that the continuous-time Gaussian white noise commonly considered in the engineering field, is an example of $\Phi$. In addition, the continuous autoregressive and moving average (CARMA) model is also an example of $\Phi$ \cite{Chen_Cheng_Wu:2013}. We mention that when $\sigma$ is a constant, $\sigma\Phi$ is a stationary noise. The purpose of introducing $\sigma$ is to capture the possible non-stationary behavior of the noise. We refer the interested reader to \cite[Section S.1]{Chen_Cheng_Wu:2013} for a quick review of the generalized random process.

In summary, in order to capture an observed time series with oscillatory behavior, we may consider the following {\it adaptive harmonic model associated with a Schwartz function $\psi$}:
\begin{align}\label{decompAdaptive}
Y = \sum_{\ell=1}^K A_\ell(t)\cos(2\pi\phi_\ell(t))+T(t)+\sigma(t)\Phi,
\end{align}
where $\sum_{\ell=1}^K A_\ell(t)\cos(2\pi\phi_\ell(t))$ is the IMT function, $T(t)$ is a trend associated with $\psi$ with scale $B$, and $\sigma(t)\Phi$ is the noise associated with $\psi$ with non-stationarity $\sigma(t)$. Note that in general $Y$ is a generalized random process, since by definition, $\sum_{\ell=1}^K A_\ell(t)\cos(2\pi\phi_\ell(t))+T(t)$ is a tempered distribution.


Lastly, as is discussed in \cite{Wu:2013}, the cosine function is not always suitable for describing oscillations, for example, the respiratory signal. Indeed, inside a respiratory cycle, the time periods of inhalating and exhalating are different. Thus, the adaptive harmonic model can be generalized to the following {\it adaptive non-harmonic model} 
\begin{equation}\label{decompShape}
Y(t) = \sum_{\ell=1}^K A_\ell(t)s_\ell(\phi_\ell(t))+T(t)+\sigma(t)\Phi,
\end{equation}
where (\ref{definition:adaptiveHarmonicSingle}) and (\ref{definition:adaptiveHarmonicMultiple}) are satisfied, and each $s_\ell:[0,1]\to \RR$ is $C^{1,\alpha}$, where $\alpha>1/2$ is a $1$-periodic function with unit $L^2$ norm such that $|\widehat{s}_\ell(k)|\leq \delta |\widehat{s}_\ell(1)|$ for all $k\neq 1$, where $\delta\geq 0$ is a small parameter, and $\sum_{n>D}|n\widehat{s}_\ell(n)|\leq \theta$ for some small parameter $\theta\geq 0$ and $D\in \NN$. The $1$-periodic function $s_\ell(\cdot)$ is called the {\em wave shape function with dominant ratio $\delta$, support $D$ and accuracy $\theta$}. 
We will introduce an algorithm for recovering $s_\ell$ from the observation later. The interested reader is referred to \cite{Wu:2013} for further discussions.

\subsection{Time-frequency analysis}
In general, it is well-known that the PS provided by the Fourier transform is limited to the extraction of the dynamical information of a signal that satisfies the adaptive harmonic (non-harmonic) model. To cope with this limitation, time-frequency analysis was introduced in the literature. Essentially, it is a technique that could be applied to generalize the PS. Intuitively, in order to capture the momentary oscillatory behavior of $R(t)$, we may analyze a small piece of the given $R$ around time $t_0$ to understand how it oscillates. Mathematically, we realize this idea by working with the short-time Fourier transform (STFT) or the continuous wavelet transform (CWT) of $R(t)$ (see, for example, \cite{Flandrin:99} for a detailed discussion and other time-frequency analysis techniques). In this paper, we focus ourselves on the CWT, as defined in (\ref{definition:CWTofg})-(\ref{definition:psiababbrev}). 
We mention that when $Y$ comes from the adaptive harmonic model, $W_Y$ is well defined when the mother wavelet $\psi$ is chosen to be in $\mathcal{S}$.

For example, with CWT, we may define the {\it scalogram} of $g(t)$ by $\big|W_{g}(a, b)\big|^2$ \cite{Flandrin:99}, which reveals the local oscillatory behavior of $g(t)$. We also call $W_{g}(a, b)$ the {\it oscillatory information of the signal associated with scale $a$ at time $b$}. Clearly, with the proposed VM wavelet (or any other compactly supported mother wavelet), we may analyze a given signal with CWT and scalogram in real-time, if we are willing to sacrifice the oscillatory information associated with large scales. However, even if the scalogram could be evaluated in real-time, one big issue about the scalogram that remains unsolved is the Heisenberg uncertainty principle -- for small scale $a>0$, one cannot expect good temporal resolution. To cope with this difficulty and increase the resolution of the time-frequency analysis, the synchrosqueezed CWT transform (SST) was introduced in \cite{daubechies_maes:1996,Daubechies_Lu_Wu:2011}, and will be discussed in the next section. One reason we focus on the synchrosqueezed CWT is the vanishing moment property of the wavelet transform. Indeed, depending on the chosen mother wavelet, the existing polynomial component in the signal is automatically removed.

\subsection{Synchrosqueezed CWT transform and time-varying power spectrum}\label{section:SSTtvPS}

The synchrosqueezed CWT transform (SST) was first introduced in \cite{daubechies_maes:1996} to analyze voice signals. The initial theoretical understanding of SST is provided in \cite{Daubechies_Lu_Wu:2011}, and further studied along with noise and trend in \cite{Thakur_Brevdo_Fuckar_Wu:2013,Chen_Cheng_Wu:2013}. After its introduction, SST has been applied to a wide range of applications. For example, it has been applied to the medical field for the study of sleep depth via the respiratory signal \cite{Chen_Cheng_Wu:2013}, ventilator weaning prediction problem \cite{Wu_Hseu_Bien_Kou_Daubechies:2013}, anesthetic depth estimation via studying the heart rate variability \cite{Lin_Wu_Tsao_Yien_Hseu:2013,Wu_Chan_Lin_Yeh:2014}, cardiopulmonary coupling analysis \cite{Iatsenko_Bernjak_Stankovski_Shiogai_Owen_Clarkson_McClintock_Stefanovska:2013}, etc.
We remark that the SST is a special case of the time-frequency reassignment technique \cite{kodera_gendrin_villedary:1978,Chassande-MottinDaubechiesAuger:97,Chassande-MottinAugerFlandrin:03,Auger_Chassande-Mottin_Flandrin:2012}, and refer the reader to \cite{Auger_Flandrin_Lin_McLaughlin_Meignen_Oberlin_Wu:2013} for a recent review article on their relationship and current progress. In addition, SST can also be defined by using the STFT, which was considered and analyzed in \cite{Wu:2011Thesis,Thakur_Wu:2011}.

We now summarize the SST theory and algorithm for a given observation $Y$ for the adaptive harmonic model. We mention that the definition of the Fourier transform in previous papers on SST, for example \cite{Thakur_Brevdo_Fuckar_Wu:2013,Chen_Cheng_Wu:2013}, is different by a $2\pi$ normalization from what we use in this paper. It is shown that the phase information hidden in CWT contains information explaining the spread-out phenomenon in the time frequency domain. To remedy it, the CWT coefficients are shifted according to certain reassignment rules determined by the phase information. In this paper, we consider the {\it reassignment rule} by
\begin{align*}
\Omega_Y(a,b):= \left\{
\begin{array}{ll}
\displaystyle\frac{-i\partial_b W_{Y}(a,b)}{2\pi W_{Y}(a,b)} & \mbox{when }|W_{Y}(a,b)|>0\\
-\infty & \mbox{when }W_{Y}(a,b)=0.
\end{array}
\right.
\end{align*}
where $\partial_b$ denotes the partial derivative with respect to $b$. 
The synchrosqueezed CWT transform is then achieved by applying this reassignment rule as follows:
\[
S^\Gamma_{Y}(b,\xi):=\int_{\{a:\,|W_Y(a,b)|>\Gamma\}}W_Y(a,b)\frac{1}{\alpha}h\left(\frac{|\xi-\Omega_Y(a,b)|}{\alpha}\right)a^{-1}\ud a,
\]
where $0<\alpha\ll 1$ is chosen by the user, $\Gamma>0$ is the hard thresholding parameter for reducing the numerical error and noise influence, $h$ is a smooth function so that $\frac{1}{\alpha}h(\frac{\cdot}{\alpha})\to \delta$, the Dirac delta distribution, in the weak sense as $\alpha\to 0$, and the scale factor $a^{-1}$ is used for reconstruction. Note that for each fixed time $b$, the CWT coefficient $W_Y(a,b)$ at scale $a$ is moved to a new frequency position according to the reassignment rule $\Omega_Y(a,b)$. We refer to the square of the modulation of the synchrosqueezed CWT transform, denoted as $V_Y(b,\xi)$, as {\it time-varying power spectrum} (tvPS) of $Y$; that is,
\[
V_Y(b,\xi):=|S_{Y}(b,\xi)|^2.
\] 
The efficacy of tvPS will be shown in the following numerical subsection. Now we summarize the advantages of SST relevant to our study below:
\begin{enumerate}
\item[(a)] SST is robust to several different kinds of noise, which might be slightly non-stationary \cite[Theorem 3.1]{Chen_Cheng_Wu:2013}; 
\item[(b)] SST is visually informative \cite[Theorem 3.1 (ii)]{Chen_Cheng_Wu:2013}. When the signal is rhythmic, according to the model (\ref{decomp1}), a dominant curve following the IF can be seen on the tvPS; otherwise the tvPS is blurred ;
\item[(c)] SST is {\it adaptive} to the data in the sense that its dependence on the chosen window function is weak \cite[Theorem 3.1]{Chen_Cheng_Wu:2013}. (This property is important in the sense that the notion ``local'' is automatically determined by the data, and hence the model bias can be reduced.)
\item[(d)] SST analysis is local in nature (see the proof of \cite[Theorem 3.1]{Chen_Cheng_Wu:2013}). Indeed, although the conditions about the adaptive harmonic model is stated in the global sense, the analysis simply depends on the local condition of the signal. In this sense, SST, and hence tvPS, can be used to capture the local dynamic behavior of the signal.
\end{enumerate} 
These properties have been theoretically justified, and we refer the interested reader to \cite{Daubechies_Lu_Wu:2011,Chen_Cheng_Wu:2013} for the proof. The discretized companions of these theorems are discussed in \cite{Chen_Cheng_Wu:2013}, and we refer the reader for the details there. We mention that the condition that the mother wavelet is a Schwartz function is critical in the proof in \cite{Chen_Cheng_Wu:2013} when the adaptive harmonic model is considered. Indeed, the noise, as a generalized random process, and the trend, as a tempered distribution, cannot be analyzed in general if the Schwartz condition is removed. So, if we use a non-Schwartz mother wavelet, such as the VM wavelets, we have to modify the definition of noise and trend. Since the modification of the proof of the theorem based on these modifications is straightforward but requires additional notations, we will skip the discussion here.

\subsection{Synchrosqueezed CWT transform and its numerical limitations}

However, from the numerical viewpoint, we have to carefully discuss its implementation limitations. Firstly, the CWT and SST should be implemented in the time domain to avoid undesirable numerical artifacts introduced by the Fourier transform. For example, suppose we have the constant function $f(t)=c\in\RR$, theoretically $W_f(a,b)=0$ for all $a>0$ and $b\in\RR$. However, this is not the case if we only have observation of the signal on a bounded time interval, and this situation is more serious if the observation time is not long enough or if the constant $c$ is quite large. 

Secondly, the differentiation of the CWT in the reassignment rule for the SST algorithm should be exact in order to avoid any possible numerical inaccuracy. Compared with the other mother wavelets with compact supports in the time domain, like the orthogonal Daubechies wavelets \cite{Daubechies:1992} and semi-orthogonal spline wavelets \cite{Chui:1992}, the proposed VM wavelets enjoy this property. Indeed, Corollary \ref{Corollary:derivativeofpsimnx} assures that the derivative of any VM wavelet is another VM wavelet. This explicit representation allows us to ease the numerical issue of differentiation.

To sum up, as an application of the VM wavelets, we propose a new numerical implementation of the SST which resolves some of these limitations.

\subsection{Causality and real-time implementation of CWT, SST and tvPS}

In the above discussion of CWT and SST, the causality issue in data analysis and real-time computing is not considered. Indeed, the mother wavelet such as the Meyer and Morlet wavelets might have infinite support in the time domain so that the future information might contaminate the present information of interest. In addition, real-time implementation is not feasible due to the need of truncation of the chosen mother wavelet in the time-domain. To alleviate these limitations, we might need to choose a mother wavelet with a compact support in time-domain such as the Daubechies wavelets \cite{Daubechies:1992} or spline wavelets \cite{Chui:1992}. However, due to various technical issues, we may only obtain the precise time-frequency information with a lag, depending on the chosen window. Thus, it would be more beneficial to choose a wavelet with the smallest support in the time-domain. On the other hand, we might also want to have the freedom to control the vanishing moment, since in some applications the trend might be modeled by polynomials of lower order. In addition, for the numerical purpose, it is beneficial to have a precise formula for all the kernels involved in the integration. For example, if we can have a precise formula of the derivative of the mother wavelet instead of numerical differentiation, the information we obtain will be more precise.  

Thus, based on the above discussion, we propose to use the compactly supported VM wavelet $\psi_{\ux,m;n,k}(x)$, where $\ux$ is the uniform knot sequence and $m,n\geq 3$, as the mother wavelet to meet all of the requirements mentioned above. For example, by extending
\[
-\partial_bW_g(a,b)=\langle g, \partial_b\psi^{(a,b)}\rangle
\] 
to
\[
\langle g, \partial_b\psi^{(a,b)}_{\ux,m;n,k}\rangle=\langle g, \psi^{(a,b)}_{\ux,m-1;n+1,k}\rangle 
\]
which follows from Corollary \ref{Corollary:derivativeofpsimnx}, where $\psi_{\ux,m;n,k}$ are VM wavelets on an arbitrary knot sequence $\ux$, the reassignment rule $\Omega(a,b)$ for the SST can be carried out precisely without differentiation.

The most important feature of the proposed solution is actually to settle the lag issue for real-time evaluation. However, near the boundary we may not be able to estimate the CWT coefficient with large scale, or we have to sacrifice the accuracy by either extending the signal or truncating the mother wavelet. With the VM wavelet defined on the nonuniform grid, this issue can be relieved by the extension of the VM wavelet to non-uniform knots, including the derivative of the CWT, by using a boundary wavelet as the mother wavelet. We now introduce the algorithm, as follows.

In this section, the VM wavelets
\begin{align}\label{definition:psij}
\psi_j:=\psi_{\tus,m;m,j}
\end{align}
in (\ref{condition:wavelet:definition}) with $n=m$ and knot sequence $\tilde{\boldsymbol{s}}=\frac{1}{2}\us$ (where $\us$ and $\tilde{\boldsymbol{s}}$ are defined in (\ref{equation:underlines}) and (\ref{equation:tilde_s}) respectively) will be used as the mother wavelets for the wavelet transform of the function $f(t)$, which represents the oscillatory pattern, as defined by the adaptive harmonic model (\ref{decompAdaptive}). 

Here, the spline order $m$ is fixed and the interior VM wavelets $\psi_j$, with $j=0,\ldots, 2(N-m)$, are precisely translations of the first one 
\begin{align}\label{definition:psi0}
\psi_0=\psi_{\tus,m;m,0}
\end{align}
by $j/2$; that is, 
\begin{align}\label{definition:psij2}
\psi_j(x)=\psi(x-j/2),\quad j=0,\ldots,2(N-m).
\end{align}
The omission of various subscripts in the notation (\ref{definition:psij}), (\ref{definition:psi0}) and (\ref{definition:psij2}) allows us to use the same notation defined in (\ref{definition:psiababbrev})
with $\psi$ as defined in (\ref{definition:psi0}), and $b=j/2$, for $j=0,\ldots, 2N-2m$. 
We will denote the ``center'' interior wavelet by
\begin{align}\label{definition:centerpsi}
\psi(x):=\psi_0\left(x-\frac{N-m}{2}\right)=\psi_{\tus,m;m,N-m}(x),
\end{align}
and apply the scaling and translation notation in (\ref{definition:psiababbrev}) to $\psi(x)$ in (\ref{definition:centerpsi}). Observe that for $a>0$, $\psi(x/a)$ is a VM wavelet on the knot sequence.
\[
\tus_a:\, 0<\frac{a}{2}<a<\frac{3a}{2}<\ldots<N,
\]
and has support inside the time-interval $[0,N]$, as long as the scale $a$ is restricted to $0<a<N/m$. Furthermore, the support of $\psi(x/a)$ does not overlap with the supports of the boundary wavelets (at both $t=0$ and $t=N$) as long as the scale $a$ is restricted to 
\begin{align}\label{condition:a:boundary}
0<a\leq \frac{N+1}{m}-2.
\end{align}
Hence, to allow large values of $a$ for analyzing low-frequency oscillations, we need a longer time interval $[0,N]$. 
\newline\newline
\textbf{Case 1.} ($1<a\leq \frac{N+1}{m}-2$ for low frequency oscillation analysis)\newline
\newline
For $a>1$, if $a$ also satisfies (\ref{condition:a:boundary}), then we may apply the scaling and translation operation to the CWT $\langle f,\psi^{(a,b)}\rangle$ on the time interval $[0,N]$ and allow the use of boundary wavelets to take care of the CWT $\langle f,\psi_{\tus,m,j}\rangle$ for $j=-m+1,\ldots,-1$ (for the end-point $t=0$ of the time-interval $[0,N]$) and for $j=2N-2m+1,\ldots,2N-m-1$
 (for the other end-point $t=N$). 
\newline\newline 
\textbf{Case 2.} ($0<a<1$ for high-frequency oscillation)\newline\newline 
Observe that since the VM wavelets have compact support, scaling by $a$, $0<a<1$, can also be applied to the interval
$[\,N-m+\frac{1}{2},N\,]$,
on which the $(m-1)$ boundary wavelets $\psi_{\tus,m;m,j}$, $j=2N-2m+1,\ldots, 2N-m-1$, live. More precisely, the $2m-1$ knots
\begin{align}\label{index:boundary}
N-m+1,\,N-m+3/2,\ldots,N-1/2
\end{align}
of $\psi_{\tus,m;m,j}$, $j=2N-2m+1,\ldots,2N-m+1$, in the open interval $\big(N-m+1/2,N\big)$ can be replaced by $K$ equally spaced knots, with $K>2m-1$, namely,
\begin{align}\label{index:boundaryExtension}
y_k:=N-m+\frac{1}{2}+k\left(\frac{m-1/2}{K+1}\right),\quad k=1,\ldots,K.
\end{align}
(Note that $y_0=N-m+1/2$ and $y_{K+1}=N$ are the two end-points of $\big[(N-m)+1/2,N\big]$) Replacement of the knots in (\ref{index:boundary}) by the knots $\{y_1,\ldots,y_K\}$ in (\ref{index:boundaryExtension}) is equivalent to scaling by 
\begin{align}\label{index:boundary_a}
a:=\frac{m-1/2}{K+1}=\frac{2m-1}{2(K+1)}.
\end{align}
We remark that since there are precisely $m-1$ boundary wavelets at the end-point $N$, the $K-2m+1$ wavelets $\psi_{\tus_a,m;m,j}$, $j=2N-2m+1,\ldots,2N-4m+K+1$, with $a$ given by (\ref{index:boundary_a}) are interior wavelets with the new equally-spaced knots $\{y_k\}$ in (\ref{index:boundaryExtension}), while the $m-1$ wavelets $\psi_{\tus_a,m;m,j}$, $j=2N-4m+K+2,\ldots,2N-3m+K$, are boundary wavelets. Hence, for $j=2N-2m+1,\ldots,2N-4m+K+1$, the wavelets $\psi_{\tus_a,m;m,j}$ are translations of a single wavelet, namely
\[
\psi_{\tus_a,m;m,j}(x)=\psi_{\tus_a,m;m,2N-2m}(x-(j-(2N-2m)a)),
\]
where $a$ is given by (\ref{index:boundary_a}).

\subsection{Numerical Implementation and Simulation}

The numerical real-time implementation of the SST and tvPS are summarized in Algorithm \ref{alg:Tf}. The Matlab code can be downloaded from \url{https://sites.google.com/site/hautiengwu/home/download}.
\begin{algorithm}[h!]
  \begin{algorithmic}
\STATE $\bullet$ Choose $m,n\geq 3$. Take the VM wavelets $\psi_1:=\psi_{\tus,m;n,0}$ and $\psi_2:=\psi_{\tus,m-1;n+1,0}$. 
\STATE $\bullet$ Evaluate the analytic representation of $\psi_1$ and $\psi_2$, denoted by $\tilde{\psi}_1$ and $\tilde{\psi}_2$, by the Hilbert transform (for example, by applying \cite[Section 5, p.634--p.635]{Micchelli_Xu_Yu:2013} for the special case of uniform knots); 
\STATE $\bullet$ Input signal $Y$ is discretized with sampling period $\Delta T$. 
\STATE $\bullet$ Choose the allowed lag time to be $L$ seconds so that $L>(m+n)\Delta T/2$. Denote $M=\lfloor L/\Delta T\rfloor$ and $N=2M$.
\STATE $\bullet$ Choose $1/2\Delta T$ (resp. $1/2L$) to be the largest (resp. lowest) frequency of interest. Discretize $[1/2L,1/2\Delta T]$ by $n_\xi$ uniform grids.  
\STATE $\bullet$ Build up a matrix $\Psi\in \CC^{(N-m-n+1)\times N}$ so that the $l^{th}$ entry of the $i$ row is $\tilde{\psi}_1^*((m+n)(l-i)/(N-i+1))$, where ${}^*$ means the complex conjugate.
\STATE $\bullet$ Build up a matrix $\Lambda\in \CC^{(N-m-n+1)\times N}$ so that the $l^{th}$ entry of the $i$ row is $\tilde{\psi}_2^*((m+n)(l-i)/(N-i+1))$. 
\STATE $\bullet$ Start from time $0$ and wait up to time $N\Delta T$. Set the current time index to be $I$. 
\STATE 
\WHILE{New input}{
\STATE $\bullet$ $\boldsymbol{g}_I\gets[\,g(I-N+1),\ldots,g(I)\,]\in\RR^{N}$;
\STATE $\bullet$ $W_I\gets\Psi \boldsymbol{g}_I\in \CC^{N-m-n+1}$ ; \COMMENT{Evaluate the CWT at time $(I-M)\Delta T$.}
\STATE $\bullet$ $Z_I\gets\Lambda \boldsymbol{g}_I\in \CC^{N-m-n+1}$ ; \COMMENT{Evaluate the partial derivative of the CWT with respect to time $b$ at time $(I-M)\Delta T$.}
\STATE $\bullet$ $\Omega_I\gets \frac{iZ_I}{2\pi W_I}\in \CC^{N-m-n+1}$, where the division is evaluated entry-wisely; \COMMENT{Evaluate the reassignment rule at time $(I-M)\Delta T$.}
\STATE $\bullet$ Initiate a zero vector $S_I \in\CC^{n_\xi}$ and evaluate the SST by
\FORALL{$j=1,\ldots,N-m-n+1$}
\STATE $k\gets \text{ROUND}\left[\frac{\Omega_l(j)-1/2L}{1/2\Delta T-1/2L}n_\xi\right]$
\IF{$1\leq k\leq n_\xi$}
\STATE $S_I(k)\gets S_I(k)+cW_I(j)a^{-1/2}$
\ENDIF
\ENDFOR 
\STATE $\bullet$ $V_I\gets |S_I |^2\in\RR_+^{n_\xi}$, where the absolute value and square operators are evaluated entry-wisely. \COMMENT{Evaluate the tvPS at time $(I-M)\Delta T$.}

\STATE $I\gets I+1$ \COMMENT{$\Delta T$ second passed.}

}\ENDWHILE
\end{algorithmic}
\caption{Summary of the Numerical Implementation}
\label{alg:Tf}
\end{algorithm}

Next we show a numerical simulation of the algorithm. See Figure \ref{figS3} and Figure \ref{figS4} for an example that illustrates how well the tvPS captures the dynamical behavior of the signal. In this example we generate the simulated signal in the following way. Simulate the realization of the following random vector of length $M>1$
\[
\tilde{\phi}_1(l\Delta t)=(W\star K_{\sigma_1})(l\Delta t),
\] 
where $1\leq l\leq M$, $\Delta t=1/32$, $W$ is the standard Brownian motion, $K_{\sigma_1}$ is the Gaussian function of standard deviation $\sigma_1$ and $\star$ denotes the convolution operator. The phase function of the first component is defined as 
\[
\phi_1(K\Delta t)=\Delta t\sum_{l=1}^K 2\frac{\tilde{\phi}_1(l) + 2\max_{k=1}^M |\tilde{\phi}_1(k)|}{\max_{l=1}^M\tilde{\phi}_1(l) + 2\max_{k=1}^M |\tilde{\phi}_1(k)|}.
\]
Then, define the AM of the first component by a similar way. For the realization of the random vector of length $M>1$
\[
\tilde{A}_1(l\Delta t)=(W\star K_{\sigma_2})(l\Delta t),
\]
where $1\leq l\leq M$, set
\[
A_1(K\Delta t)=\Delta t\sum_{l=1}^K 2\frac{\tilde{A}_1(l) + 2\max_{k=1}^M |\tilde{A}_1(k)|}{\max_{l=1}^M\tilde{A}_1(l) + 2\max_{k=1}^M |\tilde{A}_1(k)|}.
\]
We now consider
\[
f_1(l\Delta t):=A_1(l\Delta t)\cos(2\pi \phi_1(l\Delta t))\chi_{1\leq l\leq 18.75/\Delta t}
\]
as the first component. Here, $1\leq l\leq M$ and $\chi$ is the indicator function of $l$.
The second component, denoted by $f_2(t)$, is generated in the same way as that of $f_1(t)$, and the trend, denoted by $T(t)$, is generated in the same way as that of $A_1(t)$, but with non-zero mean removed; that is, the mean of $T(t)$ is zero. As a result, the final observed signal is $f(t)=f_1(t)+f_2(t)+T(t)$.
Note that in general, there is no close form expression of $A_1(t)$ and $\phi_1(t)$, and the dynamic of both components can be visually seen from the signal. The signal is shown in Figure \ref{figS2}. To model the noise, we define the signal to noise ratio (SNR) as
\begin{align}
\text{SNR}:=20\log_{10}\frac{\text{std}(f_1+f_2)}{\text{std}(\Phi)},
\end{align}
where $\text{std}$ means the standard deviation and $\Phi$ is the noise process. In the following simulations, we choose the VM wavelet $\psi_{9,9}$ as the mother wavelet with the lag time of $2$ seconds.
In Figure \ref{figS3}, we show the result of SST when the signal is not contaminated by the noise, and in Figure \ref{figS4}, we show the result of SST when the signal is contaminated by $5$dB noise. 
Note that unlike the PS of $f(t)$, it is somewhat obvious that the time-varying dynamics is faithfully reflected in the tvPS. For more robustness results of SST against different noises, we refer the reader of interest to \cite{Chen_Cheng_Wu:2013}.

\begin{figure}[ht]
\includegraphics[width=.65\textwidth]{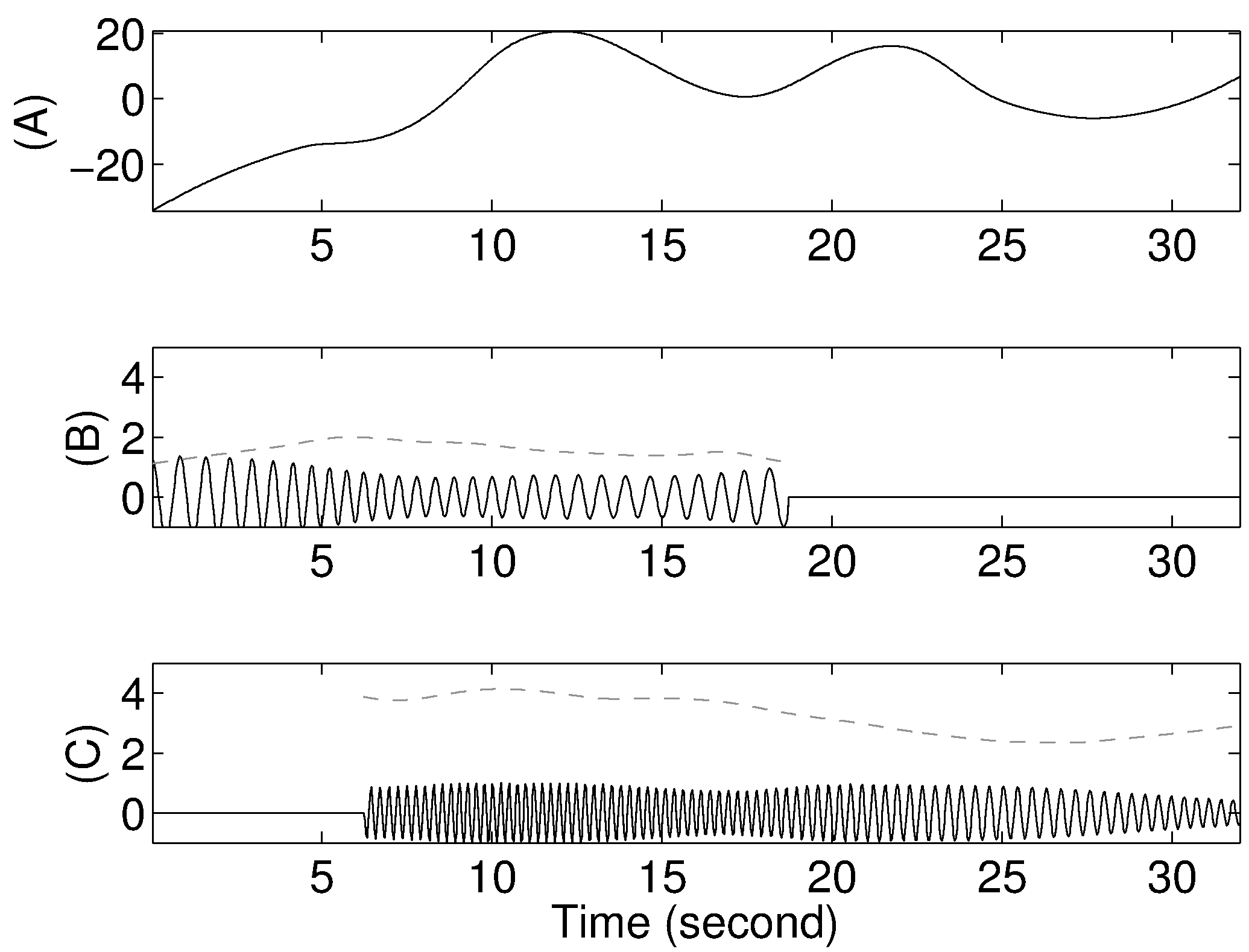}
\caption{The trend $T(t)$ is shown in (A). Note that the mean of trend is $0$, and its value is $-20$ in the beginning and about $10$ in the end. The first component $f_1(t)$ and its instantaneous frequency are shown in (B) as the black curve and the gray dashed curve respectively, and the second component $f_2(t)$ and its instantaneous frequency are shown in (C) as the black curve and the gray dashed curve respectively. Note that $f_1(t)$ exists up to $18.75^{th}$ second and $f_2(t)$ does not exist before the $6.25^{th}$ second. }   
\label{figS2}
\end{figure}

\begin{figure}[ht]
\includegraphics[width=1\textwidth]{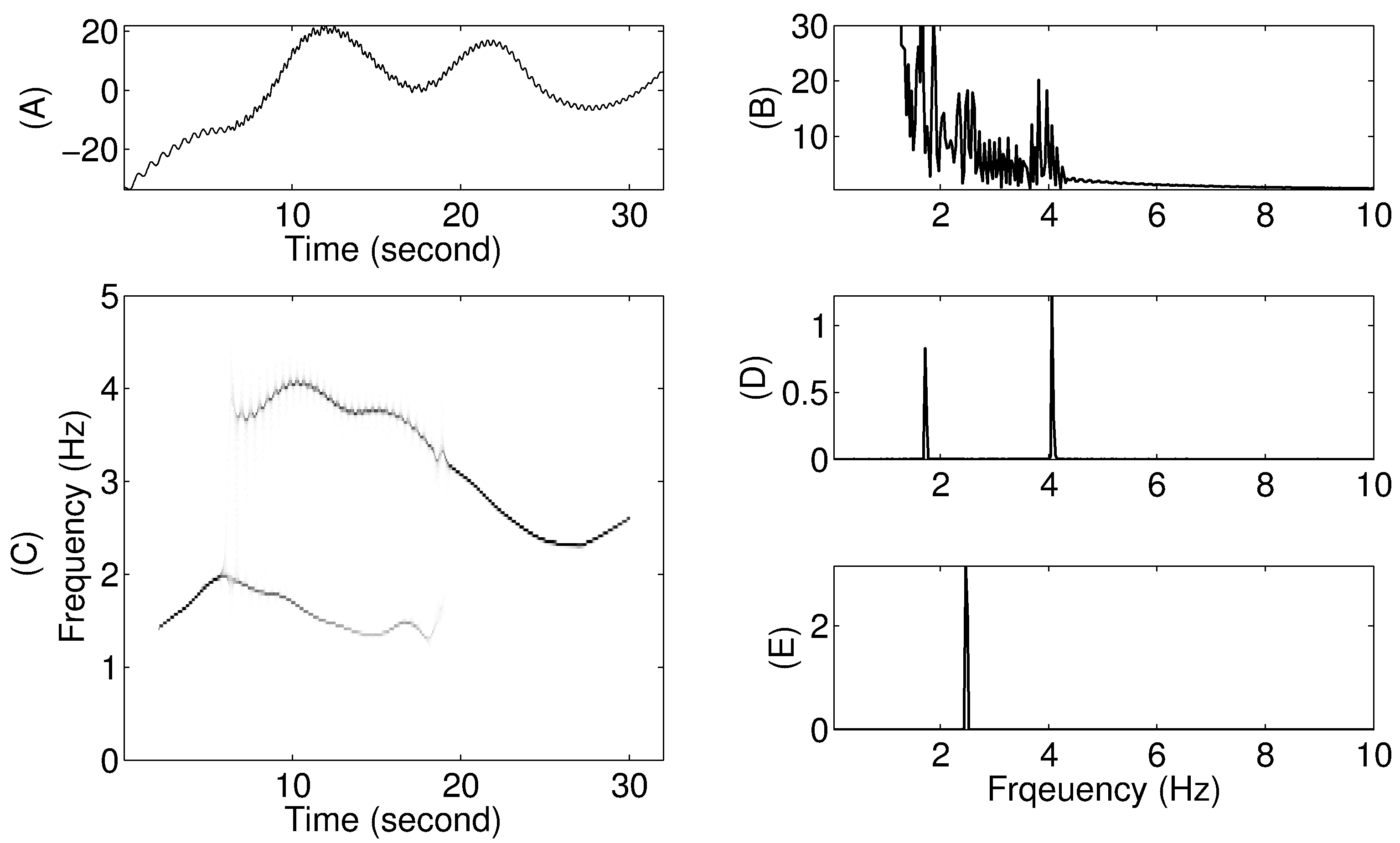}
\caption{The clean signal $f(t):=f_1(t)+f_2(t)+T(t)$ is shown in (A). Compared with the oscillatory signal, it is clear that the trend dominates the signal. In (B), we show the power spectrum of $f(t)$, where the huge amount of energy in the frequency range $[0,1]$ contributed by the trend is discarded to enhance the visualization. Note that the power spectrum is dominated by the trend, and the momentary behavior is difficult to see. The tvPS of $f(t)$ is shown in (C). Note that the dynamics of $f_1(t)$ and $f_2(t)$ can be clearly seen. In (D) and (E), we show the cross section of the tvPS at the $10^{th}$ second and the $24^{th}$ second, respectively. Note that the tvPS is concentrated at the frequency describing the momentary oscillatory behavior of each component, which capture the intuition of ``momentary power spectrum''. Also note that the influence of the trend $T(t)$ in the time-frequency analysis is minimized.}
\label{figS3}
\end{figure}

\begin{figure}[ht]
\includegraphics[width=1\textwidth]{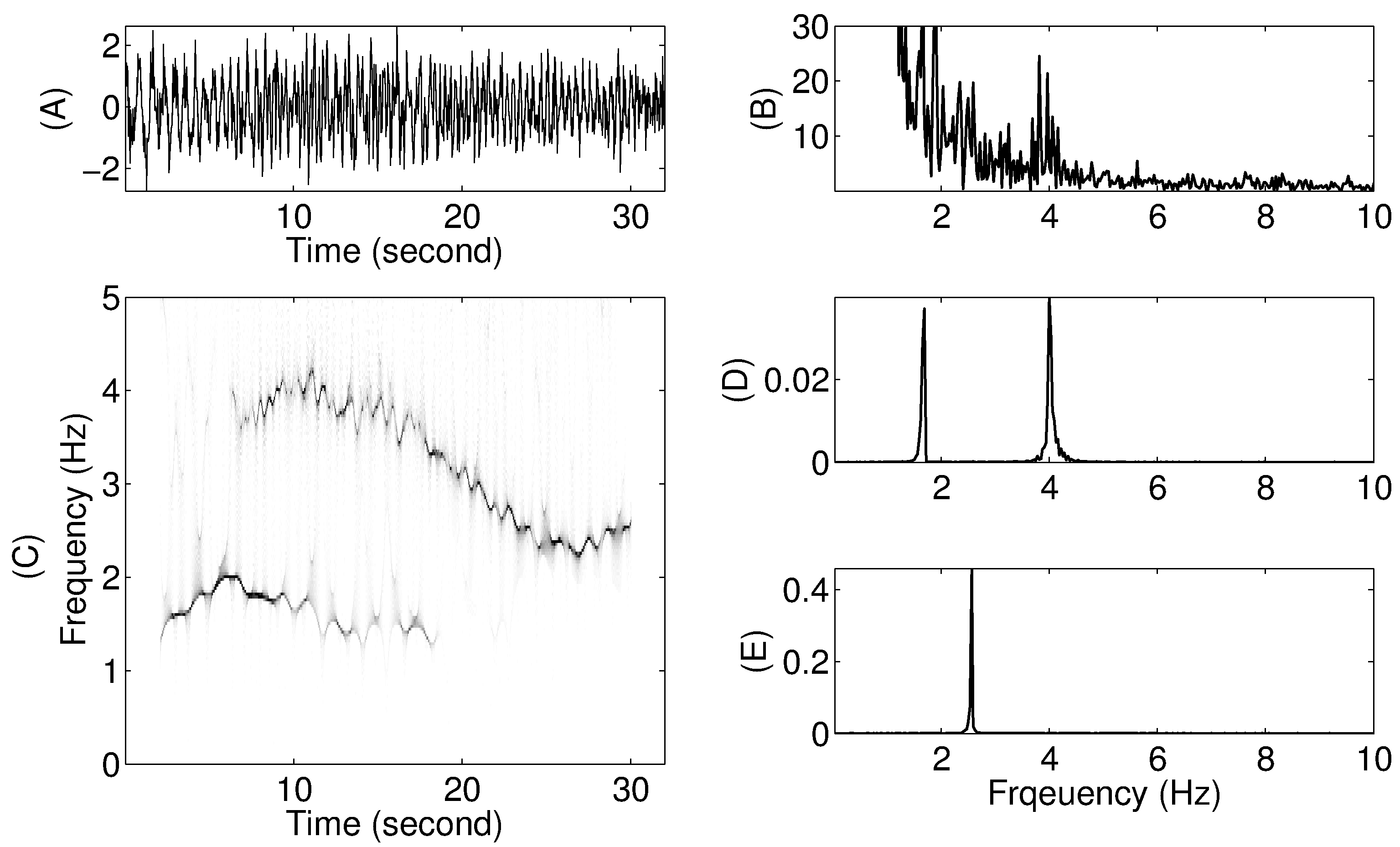}
\caption{The noisy signal $f_1(t)+f_2(t)+\Phi$, where $\Phi$ is the Gaussian white noise, is shown in (A). For the sake of emphasizing the amount of noise, the trend is not shown. In (B), we show the power spectrum of $Y=f_1(t)+f_2(t)+T(t)+\Phi$, and in (C), the tvPS of $Y$. (D) and (E) reveal the cross section of the tvPS at the $10^{th}$ second and the $24^{th}$ second, respectively. Note that although the observation is noisy and not too many features can be told from the observation, the dynamics of $f_1(t)$ and $f_2(t)$, like the appearance and disappearance of each component, and the instantaneous frequency, can still be observed. Again, the influence of the trend $T(t)$ in the time-frequency analysis is minimized.}   
\label{figS4}
\end{figure}

\subsection{Estimation of the wave shape functions}
Before closing this section, let us discuss a naive approach to estimate the wave shape functions as introduced in (\ref{decompShape}) in the adaptive non-harmonic model under the functional regression setup.
Consider the wave shape functions $s_j$, $j=1,\ldots,K$, with dominant ratio $\delta>0$, support $D\in\NN$ and accuracy $\theta>0$ in (\ref{decompShape}). To simplify our discussion, we assume that $\theta=0$, $\sigma=1$ and $T=0$. Thus, for a function $f$ that satisfies the adaptive non-harmonic model (\ref{decompShape}) can be represented by 
\begin{align}
f(t)=&\,\sum_{j=1}^KA_j(t)s_{j}(\phi_j(t))=\sum_{j=1}^KA_j(t)\sum_{\ell=1}^{D} \alpha_{j,\ell}\cos(2\pi \ell\phi_j(t))+\beta_{j,\ell}\sin(2\pi\ell\phi_j(t))\nonumber,
\end{align}
where $\alpha_{j,\ell}\in\RR$ and $\beta_{j,\ell}\in\RR$ are the Fourier coefficients of the shape function $s_j$. 
We will assume that the observation vector $\vY\in\RR^N$ satisfies
\begin{align}\label{model:with_shape_and_trend}
\vY(l)=f(l\Delta t)+\Phi_l,
\end{align}
where $l=1,\ldots, N$, $\Delta t>0$ is the sampling period and $\Phi$ is a random vector satisfying $\textup{var}(\Phi_l)=1$ for all $l$, which might not be Gaussian and the covariant matrix might not be the identity. 

The assumption $\phi'_1(t)<\phi'_k(t)$, for all $t\in\RR$ and $k>1$, enables us to estimate $A_1$ and $\phi_1$ at the sampling points $l\Delta t$, $l=1,\ldots,N$, from $\vY$ via SST with high accuracy; that is, the error is of order $\epsilon$ (the proof is the same as that in \cite{Daubechies_Lu_Wu:2011,Chen_Cheng_Wu:2013} and omitted here). Denote the estimation of $A_1(t)$ and $\phi_1(t)$ at the sampling points $l\Delta t$, $l=1,\ldots,N$, by $\widetilde{A}_1\in\RR^{1\times N}$ and $\widetilde{\phi}_1\in\RR^{1\times N}$. 
We construct the estimators $c_{1,\ell}\in\RR^{1\times N}$ and $d_{1,\ell}\in\RR^{1\times N}$ by
\begin{align}
c_{1,\ell}(l):=\widetilde{A}_1(l)\cos(2\pi\ell\widetilde{\phi}_j(l)),\quad d_{1,\ell}(l):=\widetilde{A}_1(l)\sin(2\pi\ell\widetilde{\phi}_j(l)),\nonumber
\end{align} 
where $l=1,\ldots, N$ and $\ell=1,\ldots,D$.
Next, consider the following ``functional vectors'' with components given by $c_{1,\ell}$ and $d_{1,\ell}$, namely 
\begin{align*}
&\boldsymbol{c}_{1}=[c_{1,1}^T,\ldots,c_{1,D}^T,d_{1,1}^T,\ldots,d_{1,D}^T]^T\in \RR^{2D\times N}
\end{align*}
The problem is to evaluate the parameters $\alpha_{1,\ell}$ and $\beta_{1,\ell}$, which are the Fourier coefficients of $s_1$, from the functional vectors $\boldsymbol{c}_{1}$. To facilitate our discussion, let us assume that the estimates $\widetilde{A}_1$ and $\widetilde{\phi}_1$ are precise without error; that is, $\widetilde{A}_1(l)=A_1(l\Delta t)$ and $\widetilde{\phi}_1(l)=\phi_1(l\Delta t)$ for all $l=1,\ldots,N$. Then (\ref{decompShape}) becomes
\begin{align}\label{model:with_shape_and_trend:regression_model}
\vY(l)=[\boldsymbol{\gamma}_1\boldsymbol{c}_1](l) +\sum_{j=2}^KA_j(l\Delta t)s_{j}(\phi_j(l\Delta t))+\Phi_l
\end{align}
for all $l=1,\ldots,N$, where $\boldsymbol{\gamma}_1=[\alpha_{1,1},\ldots,\alpha_{1,D},\beta_{1,1},\ldots,\beta_{1,D}]\in\RR^{1\times 2D}$. By Theorem \ref{thm:almost_orthogonal} which remains valid for the discretization setting, we have 
\[
\Delta t\vY\boldsymbol{c}^T_1  =\Delta t\boldsymbol{\gamma}_1\boldsymbol{c}_1 \boldsymbol{c}_1^T+\Delta t\Phi\boldsymbol{c}_1^T+O(\epsilon,(N\Delta t)^{-1}),
\]
where $\boldsymbol{c}^T_1$ is the transpose of $\boldsymbol{c}_1$. 
Note that since $\|f_k(t)\|_{L^2([0,N\Delta t])}=\|A_k\|_{L^2([0,N\Delta t])}\gg \epsilon$, we know the $2D\times 2D$ matrix $\boldsymbol{c}_1\boldsymbol{c}^T_1 $ is diagonal dominant. Also, by a direct evaluation, we have $\mathbb{E}(\Delta t\Phi\boldsymbol{c}_1^T)=0$ and $\text{var}(\Delta t\Phi\boldsymbol{c}_1^T)=O(L\Delta t)$. Thus we can now estimate the first shape function by using the estimator:
\begin{align}\label{estimator_shapefunction}
\widehat{\boldsymbol{\gamma}}_1:=(Y\boldsymbol{c}_1^T)(\boldsymbol{c}_1\boldsymbol{c}_1^T)^{-1},
\end{align}
where $\widehat{\boldsymbol{\gamma}}_1=[\widehat{\alpha}_{1,1},\ldots,\widehat{\alpha}_{1,D},\widehat{\beta}_{1,1},\ldots,\widehat{\beta}_{1,D}]^T\in\RR^{2D}$.
Indeed, by applying (\ref{estimator_shapefunction}), we are able to reconstruct the shape function by
\[
\widetilde{s}_1(t):=\sum_{\ell=1}^{D} \widehat{\alpha}_{1,\ell}\cos(2\pi \ell t)+\widehat{\beta}_{1,\ell}\sin(2\pi\ell t).
\]
Then we may iteratively estimate $A_j(t)$ and $\phi_j(t)$ for $j=2,\ldots,K$ and achieve the final result. Note that the success of the regression hinges on Theorem  \ref{thm:almost_orthogonal} and the estimation of $A_j(t)$ and $\phi_j(t)$.

The proposed approach to deal with the adaptive non-harmonic model is based on the functional least-square idea. Since handling the adaptive non-harmonic model is beyond the scope of this paper, a systematic study, particularly the detailed statistical analysis, will be reported in the near future. Here, we only point out a few possible directions. Firstly, in this subsection we assume the knowledge of $D$ and $J$. However, it might not be the case in practice. Furthermore, for different wave shape functions, the number of non-zero Fourier modes might be less than $D$. In this case, we may count on the hypothesis testing to determine the number of significant modes and the number of components. Secondly, the wave shape function model we consider is not the most general possible mode. In some cases we might have a more general, even discontinuous, periodic pattern. In this more general setting, we may apply another approach to recover the shape (see, for example, the diffeomorphism based spectral analysis \cite{Yang:2013} and singular value decomposition \cite{Hou_Shi_Tavallali:2013}).

\begin{figure}[h]
    \begin{center}
        \includegraphics[width=.6\textwidth]{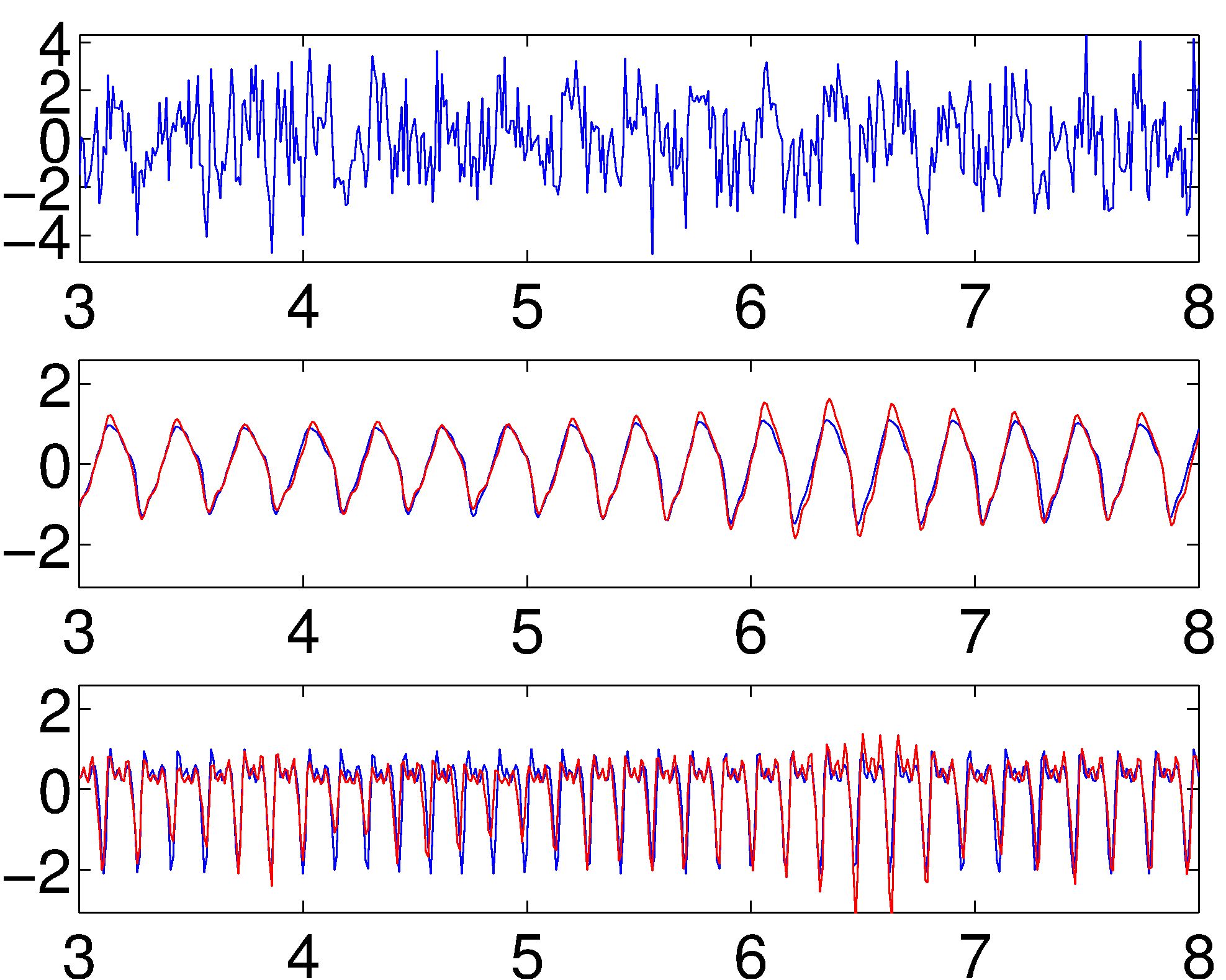}              
    \end{center}
    \caption{The result of estimating components with non-trivial shape. 
Top: The signal with two components $f=f_1+f_2$, where $f_1$ (resp. $f_2$) is non-harmonic oscillatory with shape $s_1$ (resp. $s_2$), contaminated by the $0$-dB Gaussian white noise. Middle: the curve in blue color is the clean signal $f_1$, while the red curve is the reconstructed $f_1$. Bottom: the curve in blue is the clean signal $f_2$, while the curve in red is the reconstructed $f_2$.}\label{simulation:fig1}
  \end{figure}

\section{Testbed: Anesthestic Depth Estimation}\label{section:testbed2}

Respiratory signals contain a wealth of information. A specific information that attracts more and more attention in recent years is the respiratory rate variability (BRV) \cite{Wysocki2006,Wu_Hseu_Bien_Kou_Daubechies:2013}.   
In addition to BRV, the analysis of other respiratory patterns shall facilitate the prediction of the patient's prognosis and the choice of the appropriate treatment in some cases \cite{somers_etc:2008}. We call these information extracted from the respiratory signal {\it respiratory dynamical features}.    
Physiologically, the neural respiratory control is located in the brain and comprises two system: the voluntary respiration and the involuntary respiration \cite{cite36}. In the brainstem, or more specifically the preB\"otzinger complex \cite{cite38}, the involuntary control center generates more regular oscillatory activity, whereas the voluntary respiration mediated by the control system in the forebrain presents an irregular activity. While competing with each other, the neural controls from these two systems are integrated to control the respiratory motor neuron. 

Anesthetics exerting differential effects on the central nerve system, particularly the human brain. It is known that during anesthesia, human respiration is more regular in deeper level of anesthesia, and more irregular in lighter level of anesthesia \cite{cite11}. This suggests that as the anesthesia depth increases, the anesthetics influences first the forebrain before the brain stem. Despite these early descriptions of the clinical finding, there is no available clinical instrument for quantifying this phenomenon in respiration. 

We may model these physiological findings by using the adaptive harmonic model introduced above -- when the anesthetic depth is sufficiently deep, the respiratory signal follows the model (\ref{decomp1}), but the model (\ref{decomp1}) is not suitable otherwise. In any case, this mathematical model allows us to apply the synchrosqueezing transform to evaluate its tvPS, with which we might be able to evaluate the anesthetic depth evolution during the anesthesia. 

In this section, we propose the {\it blending ECG derived respiration (EDR) algorithm} and apply the proposed real-time SST for estimating respiratory dynamics from the ECG signal in clinical anesthesia. The estimated dynamic is further shown to be closely related to the anesthetic depth.

\subsection{The blending EDR algorithm}\label{section:blending_EDR_algorithm}

We begin with applying the optimal real-time spline interpolation to obtain the respiratory signal from analyzing the morphology of the ECG signal, which is a commonly encountered non-uniform sampling dataset in the clinics.

The respiratory signal obtained from the ECG measurement is referred to as the {\it ECG-derived respiration (EDR)} signal. 
Physiologically, respiration induced ECG distortions occur for two different reasons. Firstly, the respiration-related mechanical changes affect the thoracic electrical impedance, 
and consequently the cardiac axis rotation occurring during the respiratory cycle has been shown to be the largest factor contributing to the distortion of ECG signals \cite{moody_mark:1986}. Secondly, respiration affects the heart rate variability (HRV), thereby causing respiratory sinus arrhythmia \cite{bailon_sornmo:2006}. Many algorithms have been developed to estimate the respiration on the basis of the above-mentioned physiological factors. Here we focus on analyzing the ECG morphology variation, which can be viewed as a surrogate of the non-uniform sampling of the respiratory signal. In particular, we consider the one-lead ECG signal, since it is a common setup in several clinical situations. 

Now we introduce the EDR algorithm. Given a one-lead ECG signal, for example, lead II or lead III, we first determine the amplitude of the observed R peaks. An R peak is a spiky peak in one heart beat observed in the ECG signal, as illustrated in the middle row of Figure \ref{fig:33} (marked by black crosses).
Based on the above mentioned physiological facts, the sampling times of the R peaks and their amplitudes form a non-uniform sampling of the respiratory signal. We comment that the well-known HRV behavior renders these samples irregularly. We than apply the blending operator to interpolate the R peak amplitude data to obtain the respiratory signal in real time. We refer this algorithm as the {\it blending EDR algorithm}, and we call the result the {\it blending EDR waveform}. 

\begin{enumerate}
\item[(P1)] Preprocessing the ECG signal: The median filter is applied to remove the wandering baseline artifact in the ECG signal, for example the lead II ECG signal, which may come from patient movement, dirty lead electrodes and a variety of other things. We choose the moving window of length $100$ ms, so that it is longer than the average length of the QRS complex. 
\item[(P2)] Time determination of the R peaks (or S peaks when the cardiac axis is deviated): In particular, if the cardiac axis is within the normal range of $-30^\circ$ to $90^\circ$ and the ECG signal shows the Rs pattern, we detect the timing of the R peaks; otherwise, we detect the timing of the S peaks with the condition of the rS pattern. If a beat is determined to be premature ventricular complex (PVC), it is deleted.
\item[(P3)] Construction of EDR signal: Build up the EDR signal by applying the blending operator on the amplitudes of the detected R peaks.
\end{enumerate}

The above proposed blending EDR algorithm is illustrated in Figure \ref{fig:33}. In summary, we mention that the median filter and the R peak detection can be carried out in real-time, so that by applying the blending operator, the blending EDR waveform can be obtained in real-time. Finally, we remark that traditionally researchers directly apply the cubic spline interpolation of the amplitude of the observed R peaks, which is not real-time.

\begin{figure}[!t]
\begin{center}
\subfigure{
\includegraphics[width= .6\textwidth]{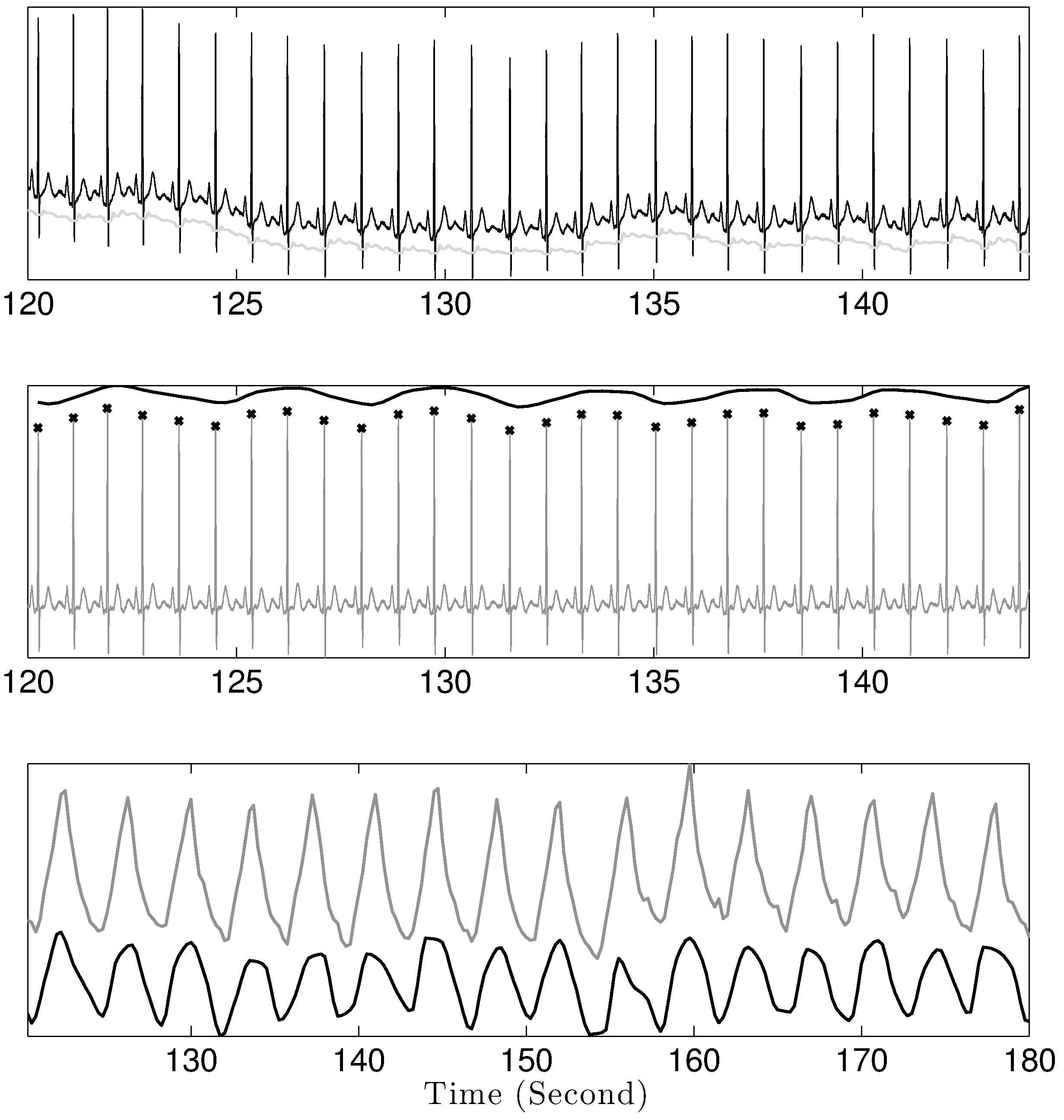}}
\end{center}
\caption{The blending EDR algorithm. In the upper subfigure, the lead II ECG signal is shown as a black curve and the wandering baseline of the lead II ECG signal is plotted as a light gray curve shifted below to enhance the visualization. In the middle subfigure, the median-filtered lead II ECG signal is plotted as a dark gray curve, superimposed with the R peaks marked as black crosses, and the blending EDR waveform is plotted as a black curve shifted up to increase the visualization. In the lower subfigure, the respiratory signal recorded from the chest band is plotted as the dark gray curve while the blending EDR waveform is plotted as the black curve. Note that we can find the oscillatory pattern in the blending EDR waveform, which oscillates similar to the true respiratory signal.} 
\label{fig:33}
\end{figure}

\subsection{Nonrhythmic to Rhythmic Ratio}
Let us first introduce the necessary terminologies for our purpose. If the signal oscillates in a way that can be quantified by the adaptive harmonic model (\ref{decompShape}), we say that the signal is {\it rhythmic}. To be more precise, we consider the following model for the rhythmic blending EDR waveform, namely, 
\begin{equation}\label{decompResp}
R(t) =  A(t)s(\phi(t))+T(t)+\sigma(t)\Phi,
\end{equation}
that is, with $K=1$ in (\ref{decompShape}). Note that the trend is important in the EDR algorithm. 
We call the signal which is not rhythmic {\it non-rhythmic}. A given time series could have a rhythmic behavior at one moment and non-rhythmic at another moment, and this is the dynamics we observe in clinics that we want to quantify. 
Precisely, as the qualitative description in the literature discussed above, the deeper the anesthetic depth, the more rhythmic the respiration should be, and the proposed index should capture this difference. The proposed index is aimed to quantify this qualitative observation.
We now discuss the index to quantify the respiratory dynamic features.


First, discretize the continuous blending EDR waveform $R(t)$ from the $0^{th}$ second to the $L^{th}$ second at $\eta$Hz, and evaluate the tvPS of $R(t)$ with the VM wavelet $\psi_{m,n}$ and the lag time $Q$ seconds, which is discretized as a $n_\xi\times L\eta$ matrix, denoted as $\boldsymbol{V}$, where $n_\xi$ is the number of discretization of the frequency axis of the tvPS. Here the resolution in the frequency axis is $\Delta\xi=\frac{\eta}{2n_\xi}$. With the tvPS, we can define the index called {\it Nonrhythmic to rhythmic ratio} (NRR), which was first proposed in \cite{Lin_Wu_Tsao_Yien_Hseu:2013} to study the dynamic hidden inside HRV. In a nutshell, NRR is defined as a ratio of nonrhythmic component to rhythmic component based on tvPS to differentiate the ``nonrhythmic-to-rhythmic'' pattern transition. We first define the {\it dominant frequency} $f_r$ as the time varying frequency associated with the maximal power on the tvPS, namely 
\begin{align}
f_r&=\argmax_{f_r\in Z_{n_\xi}^{L\eta}}\Big[ \sum_{m=1}^{L\eta} \log\left(\frac{|\boldsymbol{V}(f_r(m),m)|}{\sum_{i=1}^{n_\xi}\sum_{j=1}^{L\eta}|\boldsymbol{V}(j,i)|}\right)-\lambda\sum_{m=2}^{L\eta} |f_r(m)-f_r(m-1)|^2\Big], \label{algorithm:sst}
\end{align}
where $Z_{n_\xi}=\{1,\ldots,n_\xi\}$ and $\lambda$ is the penalty coefficient determined by the user. Note that when $R$ satisfies (\ref{decompResp}), then $\Delta\xi f_r(l)$, where $1\leq l\leq L\eta$ is simply the instantaneous frequency at time $l/\eta$ defined in the model \cite[Theorem 3.1 (ii)]{Chen_Cheng_Wu:2013}; otherwise we would obtain the region with highest energy distribution. 

Next we define the {\it rhythmic component power at time $l/\eta$}, where $1\leq l\leq L\eta$, denoted by $P_r(l)$, as the sum of the power inside the bands around $f_r(l)$ and its multiples on the tvPS, namely 
\[
P_r(l) := \sum_{k= \lfloor f_r(l) - 0.02/\Delta \xi\rfloor}^{\lceil f_r(l) + 0.02/\Delta\xi\rceil} \boldsymbol{V}(l, k),
\]
where the width of the band is chosen to be $0.02$Hz in an ad hoc way. 
The {\it nonrhythmic power at time $l/\eta$} is defined as the rhythmic power subtracted from high frequency power of $R(t)$; that is,
\[
P_{nr}(l):=\sum_{k= \lceil 0.1/\Delta \xi\rceil}^{n_\xi} \boldsymbol{V}(l, k) -P_r(l).
\] 
Finally, NRR at time $b$ is defined as the ratio of the nonrhythmic component power to the rhythmic component power at time $l/\eta$, namely 
\[
\NRR(l) = \log_{10} \left(\frac{P_{nr}(l)}{P_r(l)} \right).
\]
Clearly, when the respiration is rhythmic, $\NRR$ is small; while the respiration is nonrhythmic, $\NRR$ is large. In this study, we choose $\eta=4$, $Q=45$, $m=n=11$, $n_\xi=2000$ and $\lambda=0.5$. 

From the data analysis viewpoint, $\NRR$ is a time-varying feature extracted from a given oscillatory signal via tvPS, and plays a role of dimensionality reduction. We also mention that although NRR is useful for this anesthetic study, for a different study, we may need a different index derived from the tvPS.

\subsection{Correlations with Sevoflurane Concentration}
To quantify the clinical observation that human respiration is more regular in deeper level of anesthesia, we study the correlation between the NRR index and the inhaled anesthetics (sevoflurane) concentration. 


After approval from the Institutional Reveiw Board and obtaining individual written informed consent, we enrolled 31 patients in this study. Physiologic data, EEG data, ECG were recorded continuously and synchronously from standard anesthetic monitoring (HP agilent patient monitor system),  Bispectral Index (BIS) monitor (Aspect A-2000 BIS monitor version XP, Host Rev:3.21, smoothing window $15$ seconds; Aspect Medical Systems, Nattick, CA, USA) and the ECG recorder (MyECG E3-80; Micro-Star Intl Co., New Taipei City, Taiwan) respectively. The surgery and anesthesia were performed as usual. The inhaled and end-tidal anesthetic gas concentration detected by the gas analyzer on a Datex-Ohmeda S/5 anesthesia machine (GE Health Care, Helsinki, Finland) were also recorded. We focus on the data interval in wakening period from the start of adequate spontaneous breath to return of consciousness when sevoflurane concentration monotonically and continuously decreased. 

The NRR is evaluated from the blending EDR waveform derived from the offline lead II ECG data via its tvPS.
The inhaled sevoflurane concentration is evaluated by the estimated effect-site (brain) anesthetic gas concentration ($C_{\textup{eff}}$), which is derived from the end-tidal anesthetic gas concentration ($C_{\textup{et}}$) by the the following pharmacokinetic-pharmacodynamic modeling \cite{cite22}:
$$
\frac{\ud C_{\textup{eff}}}{\ud t} = K_{\textup{e0}} ( C_{\textup{et}}-C_{\textup{eff}} ),
$$
where the constant $K_{\textup{e0}}$ was defined as $0.20$/min for all data \cite{cite22}.

To evaluate the correlation between the NRR index and the inhaled anesthetics (sevoflurane) concentration, we employed the prediction probability ($\PK$ analysis). $\PK$ analysis is a standard statistic tool evaluating the performance of anesthetic depth index \cite{cite25}. Here we briefly summarize the $\PK$ analysis. Suppose $x$ is the indicator under analysis, for example, NRR, and $y$ is the outcome, for example, the concentration of the sevoflurane. Their relationship is described by the rank ordering of pairs of $(x,y)$, and there are five possible relationships:
\begin{enumerate}
\item $x$ and $y$ are {\it concordant} if pairs of $(x,y)$ are rank ordered in the same direction; 
\item $x$ and $y$ are {\it disconcordant} if pairs of $(x,y)$ are rank ordered in the reverse direction
\end{enumerate}
Three different ties in a pair of $(x,y)$ should be considered:
\begin{enumerate}
\item the tie in $x$. We call pairs of $(x,y)$ tie in $x$ if $x$'s are the same while $y$'s are different; 
\item the tie in $y$. We call pairs of $(x,y)$ tie in $y$ if $y$'s are the same while $x$'s are different;
\item the tie in both $x$ and $y$. We call pairs of $(x,y)$ tie in both $x$ and $y$ if $x$'s and $y$' are the same.
\end{enumerate} 
Among these five relationships, like the usual notion of correlation, in the $\PK$ analysis the concordance is desirable but not the disconcordance; unlike the usual notion of correlation, in the $\PK$ analysis the tie in $x$ is undesirable but we tolerate the tie in $y$ and the tie in both $x$ and $y$. To realize this notion, we define $\PK$ in the following way. Denote $P_c$, $P_d$ and $P_{tx}$ the respective probabilities that two pairs of $(x,y)$ independently drawn from the population with replacement are concordant, disconcordant and tie in $x$. Define 
$$
\PK=\frac{P_c+\frac{1}{2}P_{tx}}{P_c+P_d+P_{tx}}.
$$
We interpret $\PK$ value in the following way. A value of one means that the indicator always correctly predicts the observed depth of anesthesia, a value of $0.5$ means that the indicator predicts no better than $50/50$ chance, and a $\PK$ value less than $0.5$ means that the indicator predicts inversely. We mention that the difference between PK analysis and the commonly used ranking statistics, Spearman correlation, denoted as $R$, is that the tie in $y$ is overlooked in $\PK$ analysis; it is overlooked since the outcome $y$ (ex. awake vs. asleep) is usually coarse. 

The results were presented as weighted averages according to each patient's data length. 
As a result, NRR is well correlated with effect-site sevoflurane concentration. Indeed, 
during spontaneous breathing, the weighted  $\PK$ is $0.711\pm 0.021$. 
The $p$ value of this result is $<0.0001$ (A $p$ value less than $0.05$ is considered to be significant in this study). 
Note that this correlation indicates that the respiratory signal contains the dynamical information regarding the anesthesia.

The above preliminary results suggest that the nonrhythmic term and the rhythmic term of the proposed model  extract a hidden physiologic information: the relative strength of involuntary and voluntary respiratory controls varies under the influence the dynamic anesthetic effect to the brain. In conclusion, the real-time information of respiratory dynamics revealed in ECG is potential to be a real-time monitoring for clinician to administrate anesthetics. Hence the proposed algorithm could provide an immediate and continuing benefit to the patient.

\section{Discussion}


In addition to the preliminary result about the anesthetic depth evaluation, we mention some potential applications of the proposed algorithms. First, the popular algorithm empirical mode decomposition (EMD) \cite{Huang_Shen_Long_Wu_Shih_Zheng_Yen_Tung_Liu:1998} may benefit from it. Indeed, we may replace the cubic spline operator in the sifting process (the key ingredient of the EMD algorithm) by the blending operator so that the real-time implementation of EMD is possible. Second, SST can be applied to analyze the decomposed components (called the intrinsic mode function) to estimate the instantaneous frequency and other information in real-time. Note that if the intrinsic mode functions satisfy the adaptive harmonic model, the result is rigorously justified. A theoretical work toward understanding the sifting process, its real-time implementation, and a survey of its feasibility to data analysis will be reported in the future paper.
In particular, an exact and efficient computational scheme for the analytic VM wavelets
\[
\psi^*_{\ux,m;n,j}=\psi_{\ux,m;n,j}+i\mathcal{H}\psi_{\ux,m;n,j},
\]
with arbitrary knot sequence $\ux$ on bounded intervals will be derived (based on the Hilbert B-spline in \cite{Micchelli_Xu_Yu:2013}), to facilitate the wavelet analysis of the sifting process
\[
g(t)=\sum_{k=1}^Kg_k(t)+r(t)
\] 
for an arbitrarily given signal $g$ on a bounded interval, without the need of numerical integration (in computing the analytic intrinsic mode signal components $g_k^*=g_k+i\mathcal{H}g_k$). The reason, as already discussed at the end of Section \ref{section:analyticVMwavelet}, is that
\[
\langle g_k^*,\psi_{\ux,m;n,j}\rangle = \langle g_k,\psi^*_{\ux,m;n,j}\rangle.
\]

In Section \ref{section:SSTtvPS}, we already mentioned some applications of SST in the medical field. Here we briefly mention that SST itself can be applied to other fields as well. Examples include: adaptive seasonality estimation in the high frequency financial data \cite{Vatter_Wu_Chavez-Demoulin_Yu:2013}, non-stationary dynamics analysis in the financial system \cite{Guharay_Thakur_Goodman_Rosen_Houser:2013}, gearbox fault diagnosis \cite{Li_Liang:2012}, paleoclimiatic data analysis \cite{Thakur_Brevdo_Fuckar_Wu:2013}, seasonal behavior of diseases in epidemiology \cite{Chen_Cheng_Wu:2013}, etc. It is worth to noting that while combined with the blending operator and VM wavelets, SST may be applied to the problems where online monitoring of the system dynamics is important.

Note that in Algorithm \ref{alg:Tf}, we need the analytic representation of the VM wavelet $\psi_1$ by applying the Hilbert transform. We show the necessity of the Hilbert transform by illustrating with the following example. Suppose $f(t)=\cos(2\pi t)$; that is, we have a pure-tone harmonic function with frequency $1$, or an IMT function with a constant IF $1$ and constant AM $1$. By a direct calculation, we have, for any mother wavelet $\psi_1$, 
\[
W_f(a,b)=\langle f,\psi^{(a,b)}_1\rangle = \frac{1}{2}\sqrt{a}\hat{\psi}_1(a)e^{i2\pi b}+\frac{1}{2}\sqrt{a}\hat{\psi}_1(-a)e^{-i2\pi b},
\] 
and hence
\[
-i\partial_b W_f(a,b)=\pi\sqrt{a}\hat{\psi}_1(a)e^{i2\pi b}-\pi\sqrt{a}\hat{\psi}_1(-a)e^{-i2\pi b}.
\] 
It is clear that $\Omega_f(a,b)$ does not provide any information about the frequency $1$. However, if we consider $\psi^*_1$ as the analytic representation of $\psi_1$, we see that
\[
W_f(a,b)=\langle f,\psi^{*(a,b)}_1\rangle = \frac{1}{2}\sqrt{a}\hat{\psi^*}_1(a)e^{i2\pi b},\quad -i\partial_b W_f(a,b)=\pi\sqrt{a}\hat{\psi}^*_1(a)e^{i2\pi b},
\] 
which indicates that $\Omega_f(a,b)=1$ when $|W_f|>0$. 


There are several open problems left unanswered from the viewpoint of data analysis. For example, how is the number of components determined from a given observation, under the assumption that the clean signal satisfies the adaptive harmonic (non-harmonic) model? A possible solution is proposed in \cite{Chen_Cheng_Wu:2013} by using the hypothesis test. However, a systematic study is lacking. Even if we know the oracle about the number of the components, what is the proper notion of statistical accuracy of extracting the information when noise exist, and how to optimize the extraction accuracy? Also, when the noise is non-stationary, the above problems become more challenging. One possible approach to the above problems is  to model the AM and IF functions in terms of splines so that the adaptive harmonic (non-harmonic) model becomes parametric. Under this situation, the above problems could become more trackable and several traditional statistical techniques may be applied.


\section*{Acknowledgment}

The research of Charles K. Chui was supported by the U.S. Army Research Office under Grant \#W911NF-11-1-0426. Hau-Tieng Wu acknowledges support by AFOSR grant FA9550-09-1-0643. The authors are also indebted to Maryke van der Walt for carrying out the computations of the cubic VM wavelet example on a bounded interval in Section \ref{subsection:VM}.

\bibliographystyle{plain}
\bibliography{EDR,NRR}

\end{document}